\newcommand{\Syz}{\mathrm{Syz}}
\newcommand{\init}{\mathrm{init}}
\newcommand{\tr}{\mathrm{Tr}}
\newcommand{\codim}{\mathrm{codim}}
\newcommand{\id}{\mathrm{id}}
\newcommand{\spec}{\mathrm{Spec}}
\newcommand{\cone}{\mathrm{Cone}}
\newcommand{\conv}{\mathrm{Conv}}
\newcommand{\sing}{\mathrm{Sing}}
\newcommand{\proj}{\mathrm{Proj}}
\newcommand{\dem}{\mathrm{Dem}}
\newcommand{\cl}{\mathrm{Cl}}
\newcommand{\br}{\mathrm{Br}}
\newtheorem{theorem}{Theorem}[section]
\newtheorem{proposition}[theorem]{Proposition}
\newtheorem{lemma}[theorem]{Lemma}
\newtheorem{corollary}[theorem]{Corollary}
\newtheorem{definition}[theorem]{Definition}
\newtheorem{example}[theorem]{Example}
\renewcommand{\caption}[1]{\singlespacing\hangcaption{#1}\normalspacing}
\title {On the Gorensteinization of Schubert Varieties via Boundary Divisors}
\author {Sergio Da Silva}
\address{Sergio Da Silva, Cornell University, Ithaca NY}
\email{smd322@cornell.edu, sergio.dasilva@umanitoba.ca}
\thanks{Research supported in part by an NSERC PGS-D3 scholarship and a PIMS postdoctoral fellowship}
\begin{document}

\begin{abstract}
We will describe a one-step ``Gorensteinization'' process for a Schubert variety by blowing-up along its boundary divisor. The local question involves Kazhdan-Lusztig varieties which can be degenerated to affine toric schemes defined using the Stanley-Reisner ideal of a subword complex. The blow-up along the boundary in this toric case is in fact Gorenstein. We show that there exists a degeneration of the blow-up of the Kazhdan-Lusztig variety to this Gorenstein scheme, allowing us to extend this result to Schubert varieties in general. The potential use of this one-step Gorensteinization to describe the non-Gorenstein locus of Schubert varieties is discussed, as well as the relationship between Gorensteinizations and the convergence of the Nash blow-up process in the toric case.
\end{abstract}\bigskip

\maketitle

%\tablelistpage
%\figurelistpage

Gorenstein varieties provide useful representatives for birational equivalence classes as their canonical (and anticanonical) bundles are invertible. In liaison theory for example,  a union of subschemes is often assumed to be Gorenstein so that properties of these linked subschemes are easier to formulate (see \cite{MN-Linkage}). It is therefore useful to find methods to achieve this marked improvement, a so called \textit{Gorensteinization}. Such a Gorensteinization can be difficult to describe, even for toric varieties in general, except in the simplest cases. The fact that Schubert varieties can be locally degenerated to these simple toric cases allows for the existence of a simple Gorensteinization by blowing-up an explicit Weil divisor.

Schubert varieties are a well-studied class of schemes that have useful descriptions which reduce many otherwise difficult operations to simple combinatorics. Let $G$ be a simple complex Lie group, and fix a maximal torus $T$ as well as a Borel subgroup $B$ containing $T$. By the Bruhat decomposition, $B$ acts on $G/B$ with finitely many orbits indexed by $W$, where $W=N_G(T)/T$ is the Weyl group of $G$. The closures of these orbits $X^w:=\overline{BwB/B}$ are called Schubert varieties.

We can define the boundary of $X^w$ as $$\partial(X^w) = \displaystyle\bigcup_{v\lessdot w}X^v,$$
using $v$ which are covered by $w$ in strong Bruhat order. There is a Frobenius splitting on $X^w$ for which $\partial(X^w)$ is compatibly split (see \cite{Brion-Kumar}). We can localize our question and reduce to blowing-up a Kazhdan-Lusztig variety $X^w\cap X_v^{\circ}$ along its boundary. Here $\partial(X^w)\cap X_v^{\circ}$ is an anticanonical divisor, and it is Cartier if and only if blowing it up is an isomorphism. This makes it useful for detecting the Gorenstein property.

In \cite{Knut}, it was shown that there exists a degeneration from a Kazhdan-Lusztig variety to a Stanley-Reisner scheme associated to a subword complex. Subword complexes were introduced in \cite{KM} and provide a geometric way of viewing subwords of a word written in elements of Coxeter group. Here in the toric case, we can identify the boundary divisor as the topological boundary of a convex region in the character lattice, and it is especially easy to visualize the blow-up along this divisor.

\begin{theorem}\label{SR-gorenstein}
Let $Y = \spec(k[x_1,...,x_n]/I)$ where $I$ is a square-free monomial ideal. If $Y$ is homeomorphic to a ball or sphere, then the blow-up along $\partial Y$  is Gorenstein.
\end{theorem}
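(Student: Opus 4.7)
The proof naturally splits according to the topology of $|Y|$. If $|Y|$ is homeomorphic to a sphere, the associated Stanley--Reisner complex $\Delta = \Delta(I)$ is a Gorenstein* complex, so by Stanley's criterion the ring $k[x_1,\dots,x_n]/I$ is already Gorenstein. Since a sphere has empty topological boundary, $\partial Y$ is empty, the blow-up is the identity map, and the conclusion holds trivially. The substance of the theorem lies in the ball case.

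When $|Y|$ is a ball, $\Delta$ is a simplicial ball whose boundary subcomplex $\partial\Delta$ is a simplicial sphere, and $\partial Y$ is the Stanley--Reisner subscheme of $Y$ defined by $\partial\Delta$. Write $J \subset k[\Delta]$ for the defining ideal of $\partial Y$. My first step would be to identify $J$ with a representative of the canonical module $\omega_{k[\Delta]}$; this is a classical computation for Cohen--Macaulay ball complexes (see Bruns--Herzog or Stanley), relying on the fact that the Matlis dual of $k[\Delta]/(x_1,\dots,x_n)k[\Delta]$ can be realized combinatorially via the ``interior'' faces of $\Delta$. With this identification in hand, blowing up $\partial Y$ is precisely the anticanonical blow-up.

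The plan is then to analyze the blow-up chart-by-chart. Minimal generators of $J$ correspond to minimal \emph{interior faces} $\tau$ of $\Delta$ (faces of $\Delta$ that are not in $\partial\Delta$), and the affine chart of $\mathrm{Bl}_{\partial Y}(Y)$ on which $x^\tau$ generates the inverse image of $J$ should itself be a Stanley--Reisner scheme. I would expect its coordinate ring to be $k[\tilde\Delta_\tau]$, where $\tilde\Delta_\tau$ is the complex produced from $\Delta$ by a stellar-type subdivision along $\tau$ (adjoining a new vertex carrying the exceptional coordinate). Because $\Delta$ is a ball with sphere boundary, each $\tilde\Delta_\tau$ has its core equal to a sphere, so Stanley's criterion gives Gorensteinness chart-by-chart, and these patch to a Gorenstein blow-up.

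The main obstacle will be the combinatorial bookkeeping in the last step: verifying that the Rees algebra relations $\bigoplus_{n\geq 0} J^n t^n$ match the Stanley--Reisner relations of the stellarly subdivided complex on each chart, and that the subdivided complex really is topologically a ball or sphere in the form needed for Stanley's criterion. If this direct chart analysis proves cumbersome, an alternative is to bypass charts and argue via adjunction: because $J$ is an anticanonical ideal and $\partial Y$ is itself an SR scheme of a sphere, hence Gorenstein, the exceptional divisor formula on the blow-up together with Cohen--Macaulayness inherited from $Y$ should force $\omega_{\mathrm{Bl}_{\partial Y}(Y)}$ to be invertible, yielding Gorensteinness without an explicit local model.
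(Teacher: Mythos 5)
Your first two steps are fine: the sphere case is trivial (empty boundary, and a simplicial sphere is Gorenstein), and for a ball the defining ideal $J$ of $\partial Y$ in $k[\Delta]$, generated by the minimal interior faces, is indeed a representative of $\omega_{k[\Delta]}$, which matches the paper's viewpoint that $\partial Y$ is anticanonical. The gap is in the heart of your argument: the charts of $\mathrm{Proj}\bigoplus_m J^m t^m$ are in general \emph{not} Stanley--Reisner schemes, and the blow-up is not modelled by a stellar-type subdivision of $\Delta$, so Stanley's criterion cannot be applied chart-by-chart. Concretely, let $\Delta$ be the $3$-ball obtained from a central tetrahedron $\{1,2,3,4\}$ by coning a new vertex onto each of its four triangles. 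All four triangles of the central facet are interior while its edges and vertices lie on $\partial\Delta$, so on the central component $\spec k[x_1,x_2,x_3,x_4]$ the ideal $J$ restricts to $(x_1x_2x_3,\,x_1x_2x_4,\,x_1x_3x_4,\,x_2x_3x_4)$. The chart of the blow-up at the generator $x_1x_2x_3$ then contains $\spec k[x_1,x_2,x_3,\,x_4/x_1,\,x_4/x_2,\,x_4/x_3]$, an affine toric variety whose cone in the character lattice has six extremal rays in dimension four: it is singular, not an affine space, hence the chart is not a Stanley--Reisner scheme of any complex. (Stellar subdivision is the right picture for blow-ups of \emph{normal} toric varieties described by fans on the cocharacter side; here one must work in the character lattice, where blowing up along $\partial Y$ corresponds to truncating the cone over $\Delta$ along its boundary faces, and the resulting polytopal complex is generally not simplicial.)

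Your fallback ``adjunction'' argument does not close this gap either: $\partial Y$ is a non-Cartier Weil divisor on a reducible, non-normal scheme, so there is no exceptional-divisor/adjunction formula for $\omega$ of the blow-up, and Cohen--Macaulayness is not automatically inherited by a Rees algebra or its Proj. What is actually needed --- and what the paper supplies --- are two ingredients your proposal never establishes. First, a Cartier statement: since each facet cone of $C(\Delta)$ is a standard orthant, the operation ``remove the boundary and take the convex hull'' stabilizes after one step, i.e.\ blowing up the total transform along its (reduced) boundary is an isomorphism, which by the universal property of blow-ups shows that the boundary of $\widetilde Y$ is an effective Cartier divisor. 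Second, a workable notion of ``anticanonical'' for the reducible total transform (defined via the normalization and its ramification locus, guided by the Frobenius splitting), together with the check that the boundary of $\widetilde Y$ satisfies it; only then does ``anticanonical and Cartier'' yield Gorenstein. Without the stabilization/Cartier step and without handling the reducible, non-normal dualizing issues, the proposed proof does not go through.
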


In fact, the blow-up of $Y$ from Theorem \ref{SR-gorenstein} has an exceptional divisor whose reduction is the natural candidate for the boundary divisor of $\widetilde{Y}$. We make use of the Frobenius splitting on $Y$ and $\widetilde{Y}$ to determine what an anticanonical divisor for a reducible scheme should be.  In fact $\partial \widetilde{Y}$ will be anticanonical under this definition. We check that it is Cartier by observing that the blow-up of $\widetilde{Y}$ along its boundary is an isomorphism. Hence $\widetilde{Y}$ is Gorenstein. Since being Gorenstein is open in flat families (even with this generalized version -- see Section \ref{semicontinuous}), the blow-up of the Kazhdan-Lusztig variety is also Gorenstein, proving our main theorem.

\begin{theorem}
The blow-up of a Schubert variety $X^w$ along $\partial X^w$ is Gorenstein.
\end{theorem}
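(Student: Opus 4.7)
The plan is to reduce the global statement to a local one on Kazhdan-Lusztig patches and then transfer the conclusion from Theorem \ref{SR-gorenstein} via the Knutson degeneration and the openness of the Gorenstein locus. Because Gorenstein is a local property, and because the opens $X_v^\circ$ cover $G/B$, it suffices to show that for every $v \leq w$ the blow-up of the Kazhdan-Lusztig variety $X^w \cap X_v^\circ$ along its boundary $\partial(X^w) \cap X_v^\circ$ is Gorenstein. This reduction is legitimate because blowing-up commutes with restriction to opens, and because the definition of $\partial(X^w)$ in terms of the Bruhat-covering Schubert divisors restricts correctly to each opposite cell.

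On each such patch, I would invoke the degeneration of \cite{Knut} to produce a flat family $\mathcal{X} \to \mathbb{A}^1$ whose generic fiber is $X^w \cap X_v^\circ$ and whose special fiber is the Stanley-Reisner scheme $Y$ of the corresponding subword complex. The central step is to upgrade this to a degeneration of the blow-ups: one needs to show that blowing up $\mathcal{X}$ along the closure of the boundary divisor in the total family produces a flat family whose special fiber is the blow-up of $Y$ along $\partial Y$. Flatness of the blow-up family over $\mathbb{A}^1$, together with the identification of the special fiber as the blow-up of the Stanley-Reisner scheme rather than some larger scheme, is the step I expect to be the main obstacle: one must argue that the Rees-algebra construction is compatible with the degeneration, which typically comes down to checking that the initial ideal of the boundary ideal is the boundary ideal of the initial scheme (or, more robustly, that the relevant Rees algebras have compatible Gröbner bases).

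Granting this, the subword complex is known to be homeomorphic to a ball or a sphere (Knutson-Miller), so Theorem \ref{SR-gorenstein} applies and the special fiber of the blow-up family is Gorenstein. Here I must also check that the boundary divisor $\partial Y$ used in Theorem \ref{SR-gorenstein} agrees with the specialization of the Schubert boundary; this should follow from the explicit combinatorial description of the Frobenius-compatible boundary on both sides, since $\partial(X^w) \cap X_v^\circ$ is the anticanonical Frobenius-compatible divisor and its initial ideal is the union of the coordinate hyperplanes forming the topological boundary of the subword complex.

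Finally I would apply the generalized semicontinuity result recorded in Section \ref{semicontinuous}: the locus where the fibers of a flat family are Gorenstein is open on the total space. Since the special fiber of the blow-up family is Gorenstein, so is a neighborhood of it, and hence (after possibly spreading out along $\mathbb{A}^1$) the generic fiber, which is the blow-up of the Kazhdan-Lusztig patch along its boundary. Gluing these patches back together yields that the blow-up of $X^w$ along $\partial(X^w)$ is Gorenstein, completing the proof. The only delicate points beyond the flatness-of-the-blow-up step are bookkeeping: that boundaries localize correctly, and that the generalized Gorenstein semicontinuity applies to the possibly reducible scheme $\widetilde{Y}$ using the Frobenius-split definition of anticanonical divisor.
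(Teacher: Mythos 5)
Your proposal follows essentially the same route as the paper: reduce to Kazhdan-Lusztig patches by locality of the Gorenstein property (Proposition \ref{Gorenstein-local}), degenerate to the Stanley-Reisner scheme of the subword complex, use that its blow-up along the boundary is Gorenstein (Corollary \ref{BU-gorenstein}, resting on Theorem \ref{SR-gorenstein}), commute the blow-up with the degeneration via Gr\"obner bases for the Rees algebras, and conclude by the semicontinuity of the generalized Gorenstein property from Section \ref{semicontinuous}. The step you single out as the main obstacle --- compatibility of the Rees-algebra construction with the degeneration, including that the initial ideal of the boundary ideal is the boundary of the initial scheme --- is precisely what the paper supplies in Theorem \ref{commute-weight} and Corollary \ref{commute} (a weight order on the blow-up algebra built from the initial terms of the generators, via a Schreyer-type syzygy argument, together with Corollary \ref{Knutson-corollary} for the boundary), so your outline matches the paper's proof.
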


This last implication requires a degeneration of the total transform of the Kazhdan-Lusztig variety to the total transform of the degeneration. Blow-ups  however do not in general commute with degenerations. We show that we can choose a Gr\"{o}bner degeneration of $\widetilde{X^w\cap X_v^{\circ}}$ that commutes with blowing-up along the boundary divisor. This result is a more general tool that defines a monomial weighting on a blow-up algebra that degenerates to the blow-up algebra of a given degeneration.

\begin{theorem}
Let $>_{\lambda}$ be a term order on $S=k[x_1,...,x_k]$ defined by an integral weight function $\lambda: \mathbb{Z}^k\rightarrow \mathbb{Z}$. Let $I=\langle g_1,...,g_m\rangle\subset J=\langle g_{1},...,g_{m+n}\rangle$ be two ideals in $S$ each generated by a Gr\"obner basis (with respect to $\lambda$). Then there exists an integral weight function $\tilde{\lambda}:\mathbb{Z}^{n+m+k}\rightarrow \mathbb{Z}$ defining a term order $>_{\tilde{\lambda}}$ on the blow-up algebra of $S/I$ along $J/I$ which degenerates to the blow-up algebra of $ S/\init_{>_{\lambda}}(I)$ along $\init_{>_{\lambda}}(J)/\init_{>_{\lambda}}(I)$.
\end{theorem}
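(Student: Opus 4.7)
The plan is to lift $\lambda$ to a weight on the polynomial ring that presents the Rees algebra. Let $T := k[x_1, \ldots, x_k, y_1, \ldots, y_{m+n}]$ and realize the blow-up algebra $A := (S/I)[(J/I)t]$ as $T/L$, where $L := \ker\phi$ for the map $\phi: T \to (S/I)[t]$ sending $x_j \mapsto \bar x_j$ and $y_i \mapsto \bar g_i t$. Note $L$ inherits a $y$-grading, with $L_0 = I$, $L_1 \supseteq (y_1, \ldots, y_m)$ (since $g_1, \ldots, g_m \in I$), and higher pieces $L_d$ encoding the polynomial syzygies of $(g_1, \ldots, g_{m+n})$ of $y$-degree $d$ modulo $I$. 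I would then set
\[
\tilde\lambda(x_j) := \lambda(x_j), \qquad \tilde\lambda(y_i) := \lambda(\init_{>_\lambda}(g_i)),
\]
refining by an auxiliary tie-breaker to a genuine term order on $T$. The goal is to show $\init_{\tilde\lambda}(L) = L^\prime$, where $L^\prime$ is the presentation ideal of the blow-up algebra of $S/\init_\lambda(I)$ along $\init_\lambda(J)/\init_\lambda(I)$.

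The key step is to exhibit a Gr\"obner basis of $L$ with respect to $\tilde\lambda$ whose initial forms generate $L^\prime$. A natural candidate has three families: (1) the given Gr\"obner basis $g_1, \ldots, g_m$ of $I$; (2) the variables $y_1, \ldots, y_m$; and (3) for each generator of a Schreyer-style Gr\"obner basis of the module of syzygies of $(g_1, \ldots, g_{m+n})$ modulo $I$, the corresponding $y$-polynomial in $L$. The choice of $\tilde\lambda$ is precisely what makes $\phi$ weight-preserving on individual terms: the $\tilde\lambda$-weight of a monomial $c \cdot x^\beta y^\alpha$ equals the $\lambda$-weight of $c \cdot x^\beta \cdot \prod \init_\lambda(g_i)^{\alpha_i}$, so the $\tilde\lambda$-initial form of each lifted syzygy encodes exactly the corresponding syzygy on the initial-term generators modulo $\init_\lambda(I)$.

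The main obstacle is producing such a Gr\"obner basis of $L$: the Rees ideal is generally not generated by linear syzygies, so one must control not only the first syzygy module of $(g_i)$ modulo $I$ (via Schreyer's theorem applied to the Gr\"obner basis of $J$) but also higher syzygies through the $y$-grading. Handling this requires an iterated Schreyer construction, coupled with a Buchberger-criterion check that S-polynomials among the three families of generators reduce to zero under the term order $>_{\tilde\lambda}$. Once that is in hand, the containment $\init_{\tilde\lambda}(L) \supseteq L^\prime$ follows by tracking initial forms across the candidate Gr\"obner basis, while the reverse containment $\init_{\tilde\lambda}(L) \subseteq L^\prime$ reduces to the observation that, for any $f \in L$, the leading-weight terms of $\phi(f)$ cancel modulo $\init_\lambda(I)$ by construction of $\tilde\lambda$. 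This identifies the Gr\"obner degeneration of $T/L$ along $\tilde\lambda$ with the Rees algebra of the degeneration, completing the proof.
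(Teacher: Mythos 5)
Your weighting is exactly the paper's --- $\tilde{\lambda}(x_j)=\lambda(x_j)$ and $\tilde{\lambda}(y_i)=\lambda(\init_{>_\lambda}(g_i))$ --- and your overall plan (adjoin one variable per generator of $J$, exhibit a Schreyer-style Gr\"obner basis, and read the degenerate blow-up algebra off the initial ideal) is the paper's plan. The gap is at the step you yourself flag and then defer. Because you take $L$ to be the full kernel of $T\to (S/I)[t]$, your three families of candidate generators (the $g_i$, the $y_i$ for $i\le m$, and lifts of first syzygies) do not generate $L$ unless $J/I$ is of linear type over $S/I$; the ``iterated Schreyer construction, coupled with a Buchberger-criterion check'' that you invoke for the generators of $y$-degree $\ge 2$ is not a routine you can cite --- it is the substance of the theorem, and the proposal neither produces those generators, nor verifies Buchberger's criterion for them under $>_{\tilde{\lambda}}$, nor matches their initial forms with the (equally unspecified) higher-degree generators of $L'$. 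Consequently both containments remain open: $\init_{\tilde{\lambda}}(L)\supseteq L'$ requires lifting every generator of $L'$ in every $y$-degree, and for $\init_{\tilde{\lambda}}(L)\subseteq L'$ your one-sentence cancellation remark does not address the dangerous case in which the leading binomial $m_{ji}y_i-m_{ij}y_j$ of a lifted syzygy cancels against elements of $I+\langle y_1,\dots,y_m\rangle$, leaving only lower-order tails; ruling that out is precisely where the paper does its work.

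The paper is organized so that the higher-syzygy problem never arises: Lemma \ref{fg-algebra} first presents the blow-up algebra as $S[\epsilon_{1},\dots,\epsilon_{m+n}]/(I+\langle\epsilon_1,\dots,\epsilon_m\rangle+\Syz(\{g_i\}))$, so that only first syzygies enter; Lemma \ref{classic-syzygy} (Schreyer/Buchberger) gives the explicit generators $\tau_{ij}$ of $\Syz(\{g_i\})$ and $\sigma_{ij}$ of $\Syz(\{\init(g_i)\})$; and Lemma \ref{syzygy-weight} proves that $\{g_1,\dots,g_m,\epsilon_1,\dots,\epsilon_m\}\cup\{\tau_{ij}\}$ is a Gr\"obner basis with respect to $>_{\tilde{\lambda}}$ with initial ideal $\init_{>_\lambda}(I)+\langle\epsilon_1,\dots,\epsilon_m\rangle+\Syz(\{\init_{>_\lambda}(g_i)\})$, treating separately the two ways cancellation against $I+\langle\epsilon_1,\dots,\epsilon_m\rangle$ could spoil the leading terms. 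Your scruple about the Rees ideal not being generated by linear syzygies is legitimate, but raising it and then black-boxing it leaves the proof incomplete: to close the argument you must either (i) prove linear type for the ideals in question so that your first-syzygy families do generate $L$, (ii) actually carry out the postponed higher-degree Schreyer/Buchberger analysis and the matching with $L'$, or (iii) replace $L$ by the first-syzygy presentation of Lemma \ref{fg-algebra}, after which your remaining steps reduce to the computation done in Lemma \ref{syzygy-weight}.
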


In summary, we show that the blow-up of a Schubert variety along its boundary is Gorenstein. This Gorensteinization will prove even more useful if we can explicitly describe the total transform $\widetilde{X^w}$. On a local level, equations for the blow-up algebra requires an understanding of syzygies. The boundary divisor is the union of Schubert varieties, which are each defined using determinantal polynomial equations. The union is therefore defined by the intersection of these conditions. This local question  reduces to understanding what the syzygies are between products of determinants. While the syzygies between the $k\times k$ minors of a given matrix are well-understood (and defined in terms of determinants -- see \cite{Ma}), it is not known whether the syzygies between products of these minors are also determinantal in nature. It is more likely that finding  $\widetilde{X^w}$ through other means is easier and would shed light on this syzygy problem. It turns out that Bott-Samelson varieties provide a possible remedy.

Bott-Samelson varieties are commonly used as a desingularization for Schubert varieties. Bott-Samelson maps are described in Section \ref{BSR}. They have combinatorial properties whose structure is well understood. Using the universal property of blow-up maps, we show that there exists a surjective map from a generalized Bott-Samelson variety $BS^Q$ to $\widetilde{X^w}$. A topic for future research is to determine to what extent $\widetilde{X^w}$ is isomorphic to some piece of $BS^Q$. The fact that $\widetilde{X^w}$ is weakly normal simplifies the problem considerably.

\begin{proposition}
A Kazhdan-Lusztig variety, its degeneration to a Stanley-Reisner scheme, and the blow-ups along their respective boundaries are weakly normal.
\end{proposition}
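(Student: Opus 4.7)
The plan is to establish weak normality for each of the four schemes by exhibiting compatible Frobenius splittings, then invoke the fact that F-pure (in particular F-split) implies weakly normal (a theorem of Schwede), reducing from characteristic $p$ to characteristic $0$ by standard spreading-out together with openness of the weakly normal locus.

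First I would dispatch the Kazhdan-Lusztig variety $X^w\cap X_v^{\circ}$: Schubert varieties are Frobenius split compatibly with their boundaries (Brion-Kumar, as cited in the introduction), and restricting this splitting to the open subset $X_v^{\circ}$ gives a splitting of $X^w\cap X_v^{\circ}$ compatible with $\partial(X^w)\cap X_v^{\circ}$. Independently, Ramanathan's normality theorem gives weak normality directly. Next, the Stanley-Reisner scheme $Y=\spec(k[x_1,\ldots,x_n]/I)$ is compatibly Frobenius split by the standard splitting on the polynomial ring, which compatibly splits every squarefree monomial ideal; in particular it splits $\partial Y$. Hence $Y$ is F-split, and therefore weakly normal.

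For the blow-up $\widetilde{Y}$, I would argue directly: the normalization of $\widetilde{Y}$ is a disjoint union of blow-ups of affine spaces along unions of coordinate hyperplanes, each of which is a smooth toric variety. Thus $\widetilde{Y}$ is a transverse union of smooth varieties glued along smooth coordinate-type divisors, which is weakly normal by inspection on affine charts. Equivalently, one lifts the Frobenius splitting from $Y$ to $\widetilde{Y}$ using that $\partial Y$ is anticanonical and compatibly split, so that the reduction of the exceptional divisor (which plays the role of $\partial\widetilde{Y}$ by Theorem \ref{SR-gorenstein}) gives a compatibly split anticanonical divisor on the blow-up.

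Finally, for $\widetilde{X^w\cap X_v^{\circ}}$ I would use the Gr\"obner degeneration provided by the blow-up-compatible weight function constructed in the paper: this degenerates $\widetilde{X^w\cap X_v^{\circ}}$ flatly to $\widetilde{Y}$, which we already know is weakly normal. Weak normality is lower semicontinuous in flat families with geometrically reduced fibers (by an argument parallel to the one used in Section~\ref{semicontinuous} for the Gorenstein property), so weak normality of the special fiber forces weak normality of the generic fiber. The main obstacle I anticipate is precisely this semicontinuity step: while the Gorenstein case is handled in the paper, the weak normality analogue requires either verifying the relevant openness theorem in sufficient generality or, alternatively, explicitly constructing the Frobenius splitting on the Rees algebra and invoking Schwede's theorem. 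Either route is available but is where most of the technical work resides.
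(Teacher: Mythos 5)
Your overall architecture coincides with the paper's: Frobenius splitting implies weak normality (Lemma \ref{frobenius-weaklynormal}) handles the Kazhdan-Lusztig variety and the Stanley-Reisner scheme; the toric blow-up is shown to be Frobenius split; and the blow-up of the Kazhdan-Lusztig variety is treated by Gr\"obner-degenerating it (Corollary \ref{commute}) to the toric blow-up and invoking openness of weak normality in flat families. Two points of comparison. First, the obstacle you flag at the end is not actually a gap: the paper simply cites Vitulli for the fact that weak normality of the special fibre of a flat family over $\mathbb{A}^1$ forces weak normality of the general fibre (Proposition \ref{flat-weaklynormal}), so no analogue of the Gorenstein semicontinuity argument needs to be built; and your alternative route of splitting the Rees algebra of $\widetilde{X^w\cap X_v^{\circ}}$ directly is precisely what the paper says is \emph{not} known, so the degeneration route is the intended one. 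Second, your ``direct'' argument for $\widetilde{Y}$ contains a false claim: the components of $\widetilde{Y}$ are blow-ups of coordinate subspaces $\mathbb{A}^k$ along unions of coordinate subspaces of possibly higher codimension, and these are generally singular (e.g.\ blowing up $\mathbb{A}^3$ along the two axes with ideal $\langle z, xy\rangle$ yields the chart $zu=xy$, a singular quadric cone), so ``transverse union of smooth varieties, weakly normal by inspection'' does not stand as written. Your ``equivalently'' route is the right one, but the lifting of the splitting is not the smooth-center argument of \cite{LMP}; the paper proves it concretely in Proposition \ref{toric-frobenius} by showing the standard splitting, viewed as multiplication by $\frac{1}{p}$ on lattice points, preserves the convex hull $\widetilde{\cone(\Delta)}$ obtained by removing the cone on the boundary. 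With that lattice argument substituted for your sketch, your proof matches the paper's.
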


This extension of Frobenius splittings to the total transform was studied in \cite{LMP} when considering blow-ups of smooth varieties along smooth centers. Whether the Frobenius splitting extends in our case is not known, so the result in the proposition uses a different method. The weakly normal property however allows for a critical simplification (using a Zariski's main theorem type of argument).

\begin{theorem}
There exists a surjective birational $B$-equivariant morphism $\psi_Q:BS^Q\rightarrow \widetilde{X^w}$ which is an isomorphism iff it is a bijection on $T$-fixed points.
\end{theorem}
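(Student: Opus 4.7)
I would invoke the universal property of the blow-up $\pi:\widetilde{X^w}\to X^w$. The generalized Bott-Samelson variety $BS^Q$ is smooth projective and carries a proper birational surjective Bott-Samelson map $\mu_Q:BS^Q\to X^w$. The first step is to verify that the scheme-theoretic preimage $\mu_Q^{-1}(\partial X^w)$ is an effective Cartier divisor on $BS^Q$: this is automatic from smoothness once one checks it is a divisor, which for a word $Q$ in combinatorial correspondence with $w$ and its cover relations amounts to identifying the pullback of each $X^v$ (with $v\lessdot w$) with the union of codimension-one Bott-Samelson subvarieties obtained by dropping the corresponding simple reflection from $Q$. The universal property of blow-ups then produces a unique $B$-equivariant morphism $\psi_Q:BS^Q\to\widetilde{X^w}$ with $\mu_Q=\pi\circ\psi_Q$. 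Surjectivity of $\psi_Q$ follows from surjectivity of $\mu_Q$ together with properness, and birationality follows because on the open locus where $\pi$ is an isomorphism the map $\mu_Q$ is also birational, so $\psi_Q$ restricts to an isomorphism on a dense open subset.

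\textbf{The equivalence on $T$-fixed points.} The forward direction is trivial. For the converse, assume $\psi_Q$ induces a bijection between $(BS^Q)^T$ and $(\widetilde{X^w})^T$, and let $E\subset\widetilde{X^w}$ be the closed subset over which $\psi_Q$ has positive-dimensional fibers (closed by upper semi-continuity of fiber dimension for the proper morphism $\psi_Q$). Since $\psi_Q$ is $B$-equivariant, $E$ is $B$-stable; since $\widetilde{X^w}$ is projective as a blow-up of a projective Schubert variety, so is $E$. If $E\neq\emptyset$, Borel's fixed-point theorem applied to the connected solvable $B$ produces a $T$-fixed point $p\in E$. Then $F_p:=\psi_Q^{-1}(p)$ is a positive-dimensional closed $T$-stable subvariety of $BS^Q$. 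The Bialynicki-Birula decomposition of the smooth projective $BS^Q$ partitions it into affine cells $C_q$ indexed by $q\in(BS^Q)^T$; whenever a cell $C_q$ meets $F_p$, the limit flow sends points of $F_p\cap C_q$ to $q$, which must then lie in the closed $T$-stable subset $F_p$. By hypothesis $F_p$ contains exactly one $T$-fixed point of $BS^Q$, so $F_p$ meets only one cell and hence lies inside an affine space. But $F_p$ is also complete, so it must be a point, contradicting positive dimension. Therefore $E=\emptyset$, and $\psi_Q$ is quasi-finite; being proper, it is finite and bijective on closed points. A finite bijective birational morphism onto a weakly normal target in characteristic zero is an isomorphism, and $\widetilde{X^w}$ is weakly normal by the preceding proposition, so $\psi_Q$ is an isomorphism.

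\textbf{Main obstacle.} The principal subtlety lies in the reverse implication. The weak normality of $\widetilde{X^w}$ is the crucial hypothesis, since $\widetilde{X^w}$ is not known to be normal and so Zariski's main theorem in its usual form does not apply; instead one uses the seminormal analogue, which in characteristic zero coincides with weak normality. A secondary technical point is the careful identification of $\mu_Q^{-1}(\partial X^w)$ as a Cartier divisor on $BS^Q$, which requires $Q$ to be chosen in combinatorial compatibility with the boundary components $X^v$ for $v\lessdot w$ so that each pullback acquires the expected divisorial structure.
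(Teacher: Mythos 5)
Your construction of $\psi_Q$ and your quasi-finiteness argument are essentially sound and run parallel to the paper: the paper also invokes the universal property, and gets the Cartier condition by taking $Q$ a word of simple reflections so that $BS^Q$ is smooth, hence factorial (note that a \emph{generalized} Bott--Samelson variety is not smooth in general, so your opening claim needs this restriction on $Q$ — both your Cartier step and your later Bia{\l}ynicki-Birula decomposition depend on it). For the converse, where the paper applies a lemma saying that a positive-dimensional $T$-equivariantly embedded projective variety has more than one $T$-fixed point to the fibers over $T$-fixed points and then spreads out by $B$-translation, you instead locate a $T$-fixed point of the positive-dimensional-fiber locus $E$ by Borel and kill the fiber with the BB decomposition; that variant is fine and is essentially a proof of the paper's fixed-point lemma in situ.

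The genuine gap is the sentence ``being proper, it is finite and bijective on closed points.'' Properness plus quasi-finiteness gives finiteness, but finiteness (even together with birationality and surjectivity) does not give injectivity: the normalization of a nodal curve, or the map $(u,v)\mapsto(u,v^2,uv)$ onto the pinch-point surface $z^2=x^2y$, are finite birational surjective and non-injective. Bijectivity is exactly what the weak-normality criterion consumes, so it must be argued, and it is the second place where the hypothesis ``bijection on $T$-fixed points'' has to be used: a fiber over a $T$-fixed point $p$ is a finite $T$-stable set, hence (as $T$ is connected) consists of $T$-fixed points, hence is a single point by the hypothesis; one must then propagate injectivity to all points, which the paper does via $B$-equivariance, the fibers over arbitrary points being $B$-translates of fibers over the finitely many $T$-fixed points. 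Your own Borel-fixed-point trick does not rescue this step directly, because the non-injectivity locus of a finite birational map need not be closed (in the pinch-point example it is the punctured line $x=z=0$, $y\neq 0$, whose limit point is an injectivity point), so one cannot simply find a $T$-fixed point inside it. As written, your proof establishes finiteness but not bijectivity, and hence does not yet reach the weak-normality conclusion.
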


One might hope that we can at least understand the exceptional components in the blow-up of a Stanley-Reisner scheme (associated to a subword complex) along its boundary. It turns out that we can provide a combinatorial description for the new facets corresponding to exceptional components, and as a result we can deduce a useful criteria for determining when a Kazhdan-Lusztig variety is Gorenstein. This is the content of Section \ref{SR_Exceptional}.

\begin{theorem}\label{max_face}
Let $\sigma_{Q\setminus P_1}$ be a facet of the subword complex $\Delta(Q,\pi)$. Then $\tau_{Q\setminus P_2}$ is a maximal face of $\sigma_{Q\setminus P_1}\cap\partial\Delta(Q,\pi)$ if and only if $P_2$ is minimal among subwords $P\supset P_1$ satisfying $Dem(P)\gtrdot \pi$. Furthermore,  if $P_1$ has codimension $>1$ in $P_2$ then $\tau_{Q\setminus P_2}$ corresponds to a new facet of $\partial\widetilde{\Delta(Q,\pi)}$.
\end{theorem}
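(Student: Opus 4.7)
The plan is to translate both the topological boundary $\partial\Delta(Q,\pi)$ and the intersection $\sigma_{Q\setminus P_1}\cap\partial\Delta(Q,\pi)$ into Demazure-product conditions on subwords of $Q$, identify the maximal faces as minimal such subwords, upgrade strict Bruhat inequality to a Bruhat cover, and then read off which maximal faces yield new facets of $\partial\widetilde{\Delta(Q,\pi)}$ from the blow-up description in Section~\ref{SR_Exceptional}.

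For the dictionary, I would show that a face $\tau_{Q\setminus P}$ lies in $\partial\Delta(Q,\pi)$ if and only if $\dem(P)>\pi$ strictly in Bruhat order. A codimension-one face $\tau_{Q\setminus P^*}$ with $|P^*|=\ell(\pi)+1$ is interior exactly when it is contained in two facets of $\Delta(Q,\pi)$, and a standard 0-Hecke count shows this happens precisely when $\dem(P^*)=\pi$ (giving two reduced deletions), while $\dem(P^*)\gtrdot\pi$ forces a unique reduced deletion and hence a boundary ridge. For a general face, the forward direction is immediate; the reverse follows from saturation of Bruhat intervals, which produces $\pi'$ with $\pi\lessdot\pi'\leq\dem(P)$, combined with the subword property of Bruhat order, which produces a reduced subword $P^*\subseteq P$ for $\pi'$ and embeds $\tau_{Q\setminus P}$ inside the boundary ridge $\tau_{Q\setminus P^*}$.

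Using this dictionary, the faces of $\sigma_{Q\setminus P_1}\cap\partial\Delta(Q,\pi)$ are the $\tau_{Q\setminus P}$ with $P\supseteq P_1$ and $\dem(P)>\pi$, and maximal faces correspond to minimal such $P$. To upgrade strict inequality to a Bruhat cover, fix any $i\in P_2\setminus P_1$: by minimality $\dem(P_2\setminus\{i\})=\pi$, and $P_2$ is obtained from $P_2\setminus\{i\}$ by reinserting one simple reflection. A standard 0-Hecke calculation shows that inserting a single simple reflection into a word either keeps the Demazure product unchanged or raises it by exactly one Bruhat cover, so $\dem(P_2)\in\{\pi\}\cup\{\pi':\pi\lessdot\pi'\}$; since $\dem(P_2)>\pi$, we conclude $\dem(P_2)\gtrdot\pi$. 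Conversely, if $P_2$ is minimal with $\dem(P_2)\gtrdot\pi$, then any $P'$ with $P_1\subseteq P'\subsetneq P_2$ and $\dem(P')>\pi$ would, by the same insertion lemma applied to a chain removing letters one at a time, admit a still smaller subword with Demazure product covering $\pi$, contradicting minimality in the stronger sense.

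For the new-facet claim I appeal to the explicit blow-up description in Section~\ref{SR_Exceptional}: codimension-one boundary ridges of $\Delta(Q,\pi)$ (those with $|P_2\setminus P_1|=1$) persist as facets of $\partial\widetilde{\Delta(Q,\pi)}$, while each deeper boundary face maximal within a facet of $\Delta(Q,\pi)$ (those with $|P_2\setminus P_1|\geq 2$) gets blown up into a new exceptional boundary divisor and contributes a new facet of $\partial\widetilde{\Delta(Q,\pi)}$. The main obstacle is the Bruhat-cover upgrade in the preceding paragraph, which rests on the single-letter insertion lemma; once that is verified, the rest of the proof is a direct combinatorial reading-off, with the new facets emerging from the blow-up description developed in Section~\ref{SR_Exceptional}.
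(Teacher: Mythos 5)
The central step of your argument --- the ``single-letter insertion lemma'' asserting that inserting one simple reflection into a word either leaves the Demazure product unchanged or raises it by exactly one Bruhat cover --- is false, and with it the cover-upgrade paragraph collapses. Take $Q=(s_1,s_2,s_1)$ in $S_3$ and the word $P'=(s_1,\,s_1)$ occupying the outer two positions: $\dem(P')=s_1$, but reinserting the middle letter gives $\dem(s_1s_2s_1)=s_1s_2s_1$, whose length jumps by two, not one. So from ``$\dem(P_2\setminus\{i\})=\pi$ for one chosen $i$'' you cannot conclude $\dem(P_2)\in\{\pi\}\cup\{\pi':\pi\lessdot\pi'\}$. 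The same defect undermines your converse direction, which applies the identical lemma along a chain of single-letter deletions.

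What saves the theorem --- and what the paper's proof actually uses --- is the full strength of maximality: \emph{every} letter $\sigma_i\in P_2\setminus P_1$ has $\dem(P_2\setminus\sigma_i)=\pi$, and $P_1$ is a reduced word for $\pi$. (In the counterexample above with $P_1$ the first letter, the deletion $P_2\setminus\{3\}=s_1s_2$ is already a boundary face, so the face is not maximal; your argument never consults these other deletions and therefore cannot distinguish such configurations.) The paper writes $P_2=T_1\sigma_1T_2\sigma_2\cdots\sigma_kT_{k+1}$ with $T_1\cdots T_{k+1}$ reduced for $\pi$, strips off the common reduced prefix $T_1$ using $l(\dem(P_2))\leq l(T_1)+l(\dem(\sigma_1T_2\cdots\sigma_kT_{k+1}))$, and inducts on the codimension $k$ to get $l(\dem(P_2))\leq l(\pi)+1$; since $\dem(P_2)\neq\pi$ (the face lies on the boundary), the cover follows. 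Your surrounding scaffolding --- the dictionary ``boundary face $\Leftrightarrow\dem(P)>\pi$'' and the reduction of the new-facet claim to the blow-up description of Proposition \ref{New_Facets} --- is consistent with the paper, but the Bruhat-cover step needs to be rebuilt along these lines (using all deletions simultaneously), not via single insertions.
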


\begin{corollary}\label{KL_Gorenstein}
Consider the Kazhdan-Lusztig variety $X_w\cap X^v_{\circ}$ and let $Q$ be a reduced word for $v$ written as an ordered list of simple reflections.  Then $X_w\cap X^v_{\circ}$ is not Gorenstein if and only if there is a reduced subword $P_w\subset Q$ for $w$ and a codimension $>1$ subword $P\supset P_w$ that minimally satisfies $\dem(P)\gtrdot w$.
\end{corollary}

Finally, let us justify why it is worth studying Gorensteinizations in general. We offer two applications which are topics for future study.\\

\textbf{Then Gorenstein Locus of $X^w$:} In the $GL_n$ case, we can describe which $X^w$ are Gorenstein using pattern interval avoidance. The Gorenstein locus is only conjecturally described in such a way (see \cite{Woo-Yong}). Pattern avoidance conditions for $X^w$ other than type $A$ are not known.

The singular locus of $X^w$ on the other hand is well-understood. One approach is using the quasi-resolutions of Cortez in \cite{Cortez2} and \cite{Cortez}. Here a family of these quasi-resolutions $\pi_i:Y_i\rightarrow X^w$ are chosen, where each $Y_i$ is similar to the Bott-Samelson construction. The singular locus of $X^w$ can then be described as

$$ \bigcap_i (\pi_i(\sing(Y_i))\cup \br(\pi_i))  $$
where $\br(\pi_i)$ is the branch locus of $\pi_i$ and $\sing(Y_i)$ is the singular locus of $Y_i$.

Using this as motivation, we observe that in our case we have a Gorensteinization $\pi:\widetilde{X^w}\rightarrow X^w$ for which we hope that the non-Gorenstein locus can be described in terms of the non-Gorenstein locus of $\widetilde{X^w}$ and the branch locus of $\pi$. Of course the former is empty, so understanding the branch locus of $\pi$ is key (and difficult at this point).

We thank Alexander Woo and Alexander Yong for observing this connection.\\

\textbf{Resolving singularities via Nash Blow-ups:} Given an $m$-dimensional quasiprojective variety $X\subset \mathbb{P}^n$, we can define a Nash blow-up as the closure of the graph of the Gauss map,

$$X \rightarrow Gr(m+1,n+1) $$
taking a smooth point to its tangent space. The question is whether repeating this process terminates after finitely many steps (in which case we have resolved the singularities of $X$). The problem remains open for toric varieties, although work has been done on this problem with all experimental results converging in finite time (see \cite{Proudfoot}).

In the Gorensteinization process of $X^w$, we degenerate to the toric case and consider the method of blowing-up boundary divisors until we have an isomorphism. While we don't know whether this process would terminate for toric varieties in general, we develop ideas towards this result in the sections that follow.

The interesting connection is that this Gorensteinization process is linked to the Nash blow-up problem. We thank John Moody for the observation that the Gorensteinization process converging to a smooth variety implies that the Nash Blow-up process also converges. The convergence of this Gorensteinization process corresponds to a torus equivariant sheaf of finite type that also contains a sheaf which determines the convergence of the Nash blow-up process. These are both subsheaves of the Grauert-Riemenschneider sheaf.

This exciting link between Gorensteinizations for toric varieties and resolving their singularities via Nash blowups is certainly worth pursuing in future research.\newline

\textbf{Acknowledgments:} I would like to give special thanks to my advisor Allen Knutson for his direction and guidance all of these years. I have learned a great deal from him and am thankful for the many projects we have worked on. I would also like to thank my thesis committee members Yuri Berest, Michael Stillman and Ed Swartz whose support during this process has been invaluable. Finally, thanks to  Michel Brion, Karl Schwede, Alexander Woo and Alexander Yong for their advice on a variety of topics related to this research.

\section{Preliminaries}

For us, a scheme will be separated of finite type over an algebraically closed field $k$ of characteristic 0, unless specified otherwise. A variety will be an integral scheme.

Let $G$ be a simple complex Lie group, and fix a maximal torus $T$ as well as a Borel subgroup $B$ containing $T$. By the Bruhat decomposition, $B$ acts on $G/B$ with finitely many orbits indexed by $W$, where $W=N_G(T)/T$ is the Weyl group of $G$, and

$$G/B = \bigsqcup_{w\in W} BwB/B \text{  .}$$

For each $w\in W$, let $X^w_{\circ} = BwB/B$, called a Schubert cell. Then its Zariski closure is the Schubert variety $X^w$ (note the difference in the notation $X_w$ used by some other authors). The left $T$ action on $X^w$ has finitely many $T$-fixed points $e_v :=vB/B$ for $v\leq w$. Every point in $X^w$ is contained in the $B$-orbit of some $e_v$.

We can define the \textbf{boundary} of $X^w$ using $v$ which cover $w$ in strong Bruhat order as

$$\partial X^w = \displaystyle\bigcup_{v\lessdot w}X^v \text{  .}$$

\begin{example}\normalfont In the $G = GL_4(\mathbb{C})$ case, it is not hard to see that the boundary of $X^{4231}$ has four components given by

$$\partial X^{4231} = X^{4213} \cup X^{4132} \cup X^{3241} \cup X^{2431}$$

Recall that the dimension is computed by counting the number of inversions in the permutation, so here $$\dim X^{4231} =5 \text{ and }\dim \partial X^{4231} =4.$$\qed
\end{example}

Finally, we state a well-known result for reference.

\begin{lemma}\label{schubert-normal}
The Schubert variety $X^w$ is Cohen-Macaulay and normal.
\end{lemma}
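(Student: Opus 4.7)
The plan is to deduce normality and Cohen-Macaulayness simultaneously by exhibiting a rational resolution of singularities of $X^w$ via a Bott-Samelson desingularization. First I would fix a reduced expression $w=s_{i_1}s_{i_2}\cdots s_{i_\ell}$ and form the associated Bott-Samelson variety $Z$, built as an iterated $\bP^1$-bundle over a point; this $Z$ is smooth and projective and carries a canonical proper birational $B$-equivariant multiplication map $\pi:Z\to X^w$. Since generalized Bott-Samelson varieties are introduced later in the paper (Section \ref{BSR}), this framework is naturally compatible with the rest of the exposition.

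The central step is to prove that $\pi$ is a rational resolution, i.e.\ $\pi_*\cO_Z=\cO_{X^w}$ and $R^i\pi_*\cO_Z=0$ for $i>0$. I would proceed by induction on $\ell(w)$. The base case $\ell(w)=0$ is immediate. For the inductive step, write $w=w's_{i_\ell}$ with $\ell(w')=\ell(w)-1$; then $Z$ fibers as a $\bP^1$-bundle $Z\to Z'$ where $Z'$ is the Bott-Samelson for $w'$, and on the Schubert side there is an analogous factorization through $P_{i_\ell}\times^B X^{w'}\to X^w$, where $P_{i_\ell}$ is the minimal parabolic for $s_{i_\ell}$. Combining the inductive hypothesis applied to $Z'\to X^{w'}$ with flat base change along the $\bP^1$-bundle structure and the Leray spectral sequence for the composition yields the required vanishing.

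Granted the rational resolution, both conclusions follow. For normality, since $\pi_*\cO_Z=\cO_{X^w}$ and $Z$ is smooth (hence normal), $X^w$ coincides with the Stein factorization of $\pi$ and is therefore normal by Zariski's main theorem. For Cohen-Macaulayness, in characteristic zero any variety admitting a rational resolution is Cohen-Macaulay (Kempf's criterion, a consequence of Grauert-Riemenschneider vanishing applied to the relative canonical sheaf on $Z$).

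The main obstacle is the higher direct image vanishing $R^i\pi_*\cO_Z=0$. The subtlety is that, when unwinding the induction, one must carefully distinguish whether the Schubert-side map corresponding to appending $s_{i_\ell}$ is birational or a genuine $\bP^1$-bundle, i.e.\ whether $\ell(ws_{i_\ell})>\ell(w)$ or not; this dichotomy controls the behavior of the direct image and must be tracked at each inductive step. Alternatively, one may sidestep the recursion entirely and cite Ramanathan's theorem, which uses compatible Frobenius splittings (invoked earlier in the introduction via \cite{Brion-Kumar}) to establish that Schubert varieties are normal and Cohen-Macaulay in arbitrary characteristic, thus a fortiori in the characteristic-zero setting of this paper.
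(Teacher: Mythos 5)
The paper does not actually prove this lemma: it is stated as a well-known fact for later reference, with the relevant background (Frobenius splitting of $G/B$ compatibly splitting all $X^w$) cited from \cite{Brion-Kumar}. So your fallback option --- citing Ramanathan's theorem, or equivalently the Mehta--Ramanathan/Ramanan--Ramanathan results as packaged in \cite{Brion-Kumar}, together with reduction from characteristic $p$ to characteristic $0$ --- is exactly the justification the paper implicitly relies on, and it is sufficient.

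Your primary inductive argument, however, has a genuine gap at its central claim. For a proper birational map $\pi:Z\to X^w$ with $Z$ smooth, the Stein factorization identifies $\pi_*\cO_Z$ with the structure sheaf of the normalization of $X^w$; hence proving $\pi_*\cO_Z=\cO_{X^w}$ is literally equivalent to proving that $X^w$ is normal, which is the statement in question. The inductive step as you describe it does not close this loop: knowing $Z'\to X^{w'}$ is a rational resolution and that $Z\to Z'$ is a $\bP^1$-bundle, flat base change and Leray give you vanishing of higher direct images along $Z\to P_{i_\ell}\times^B X^{w'}$, but the remaining map $P_{i_\ell}\times^B X^{w'}\to X^w$ is proper birational onto a target not yet known to be normal, so its pushforward is a priori only the normalization of $\cO_{X^w}$. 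This is precisely where all classical proofs need an extra input (Frobenius splitting and the splitting of the boundary, Seshadri's or De Concini--Lakshmibai's standard monomial theory, or Kempf-style vanishing arguments), and historically it is where gaps in early ``proofs'' occurred. The dichotomy you flag as the main obstacle (whether appending $s_{i_\ell}$ is birational or a $\bP^1$-bundle) is not the real difficulty, since one always works with a reduced word. Your concluding deductions are fine once the rational-resolution statement is granted: normality from $\pi_*\cO_Z=\cO_{X^w}$ and Cohen--Macaulayness from Grauert--Riemenschneider/Kempf in characteristic zero. So either carry out the genuine inductive argument with the missing normality input, or simply cite the literature as the paper does.
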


\subsection{Kazhdan-Lusztig varieties}\label{KL}

To get local equations for $X^w$, we resort to computing Kazhdan-Lusztig ideals (see \cite[\S 3]{Woo-Yong}) using local coordinates for $G/B$. There is an isomorphism between a neighborhood of any point in $G/B$ with a neighborhood of a $T$-fixed point (using the $B$-action on $G/B$). Let $X_w^{\circ}:= B_{-}wB/B$ denote the opposite Schubert cell. To find local equations for $X^w$ at $e_v$, it is enough to study $X^w\cap vX^{\circ}_{\id}$, where $vX^{\circ}_{\id}$ is an affine neighborhood of $e_v$.

\begin{lemma}[Kazhdan-Lusztig Lemma]
$X^w\cap vX^{\circ}_{\id}\cong (X^w\cap X^{\circ}_v)\times \mathbb{A}^{l(v)}.$
\end{lemma}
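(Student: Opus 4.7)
The plan is to exhibit an $\mathbb{A}^{\ell(v)}$-factor inside the affine neighborhood $vX_{\id}^{\circ}$ that is transverse to $X^{\circ}_v$ and that lies in a subgroup of $B$; since $X^w$ is left $B$-invariant, this factor will split off from the intersection. The first move is to rewrite the chart as $vX_{\id}^{\circ} = vU_-B/B = (vU_-v^{-1})\cdot e_v$, using $B_-=TU_-$ and the fact that $T$ fixes $e_v$, so that the chart is parameterized isomorphically by the unipotent group $vU_-v^{-1}$ acting on $e_v$.

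Next I would decompose $vU_-v^{-1}$ via root subgroups. Its root set is $\{\beta : v^{-1}\beta<0\}$, which I split into a positive part $U^+_v := U\cap vU_-v^{-1}$ (the root subgroups $U_\beta$ with $\beta>0$ and $v^{-1}\beta<0$, which is the inversion set of $v^{-1}$ and thus has cardinality $\ell(v)$) and a negative part $U^-_v := U_-\cap vU_-v^{-1}$. Choosing a compatible order of factors, the product map gives a variety isomorphism $U^+_v\times U^-_v \xrightarrow{\sim} vU_-v^{-1}$. A separate check identifies $U^-_v\cdot e_v$ with $X^{\circ}_v$: the stabilizer of $e_v$ in $U_-$ is $U_-\cap vUv^{-1}$ (roots $\alpha<0$ with $v^{-1}\alpha>0$), which is complementary to $U^-_v$ inside $U_-$, so $U^-_v$ serves as a section of $U_-\to U_-/\mathrm{Stab}\cong X^{\circ}_v$. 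Combining these yields an isomorphism
\[
\mu:U^+_v\times X^{\circ}_v\xrightarrow{\sim} vX^{\circ}_{\id},\qquad (u,x)\mapsto u\cdot x.
\]

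Finally, since $X^w$ is left $B$-invariant and $U^+_v\subset U\subset B$, the subvariety $X^w\cap vX^{\circ}_{\id}$ is stable under the $U^+_v$-action on the left factor of $\mu$. Therefore $\mu^{-1}(X^w\cap vX^{\circ}_{\id}) = U^+_v\times (X^w\cap X^{\circ}_v)$ as subschemes of the product, and since $U^+_v$ is a product of $\ell(v)$ root subgroups each isomorphic to $\mathbb{A}^1$, the lemma follows. The main technical obstacle is verifying that $\mu$ is an isomorphism of schemes (not just a set-theoretic bijection) and that the $U^+_v$-splitting in the last step is scheme-theoretic; both reduce to carefully setting up the root-subgroup coordinates on the chart and checking that the induced action on the product structure agrees with the group law on $vU_-v^{-1}$.
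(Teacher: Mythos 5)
Your argument is correct, and it is essentially the standard proof of this lemma: the paper itself gives no argument beyond citing \cite{Woo-Yong} (Section 3.2), and the proof there (going back to Kazhdan--Lusztig's original lemma) is exactly your decomposition $vU_-v^{-1}\cong (U\cap vU_-v^{-1})\times(U_-\cap vU_-v^{-1})$ into root subgroups together with left $B$-invariance of $X^w$ to split off the $\mathbb{A}^{\ell(v)}$-factor. The technical points you flag (the product map on root subgroups being a variety isomorphism, and the scheme-theoretic splitting of a $U^+_v$-stable subscheme of a trivial $U^+_v$-torsor) are standard facts, so your outline is complete.
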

\begin{proof}
See 3.2 in \cite{Woo-Yong}.
\end{proof}

The $X^w\cap X^{\circ}_v$ are called Kazhdan-Lusztig varieties. We justify the notation $X^w$ instead of $X_w$ because of the fact that $ X^w\cap X^{\circ}_v\neq \emptyset$ iff $w\geq v$ (and conventionally posets are drawn with larger elements on the top).

Since $X^w$ is covered by open sets of the form $X^w\cap vX^{\circ}_{\id}$, it will be enough to study $X^w\cap X^{\circ}_v$ to deduce information about $X^w$. We will denote the defining ideal for the Kazhdan-Lusztig variety $X^w\cap X_{v}^{\circ} $ by $I_{w,v}$. A simple topological computation is needed to restrict $\partial X^w$ to the local case:

\begin{lemma}
$\partial((X^w\cap X_v^{\circ})\times \mathbb{A}^{l(v)}) = (\partial X^w\cap X_v^{\circ})\times \mathbb{A}^{l(v)}.$
\end{lemma}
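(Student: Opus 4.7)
The statement is essentially a compatibility claim: the Kazhdan--Lusztig isomorphism carries the restricted boundary of $X^w$ to the boundary on the product side. My plan is to apply the Kazhdan--Lusztig Lemma uniformly to $X^w$ and to each boundary Schubert variety $X^u$ with $u\lessdot w$, and then to distribute a union over a product.

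First, I would recall that the isomorphism $X^w\cap vX_{\id}^{\circ}\cong (X^w\cap X_v^{\circ})\times\mathbb{A}^{l(v)}$ is induced by an ambient product decomposition $vX_{\id}^{\circ}\cong X_v^{\circ}\times\mathbb{A}^{l(v)}$ of the affine neighborhood of $e_v$, obtained from the factorization of the relevant unipotent subgroup into a piece that slides along $X_v^{\circ}$ and a piece that slides along the Schubert cell $X^v_{\circ}$ through $e_v$. Crucially, this decomposition depends only on $v$, not on $w$. Hence for every $u\in W$ with $u\geq v$ the same splitting restricts to an isomorphism $X^u\cap vX_{\id}^{\circ}\cong (X^u\cap X_v^{\circ})\times\mathbb{A}^{l(v)}$, and these fit compatibly into the isomorphism for $X^w$.

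Next, I would simply unwind the definition of the boundary and intersect with $vX_{\id}^{\circ}$:
$$\partial X^w\cap vX_{\id}^{\circ} \;=\; \bigcup_{u\lessdot w}\bigl(X^u\cap vX_{\id}^{\circ}\bigr) \;=\; \bigcup_{u\lessdot w}\bigl((X^u\cap X_v^{\circ})\times\mathbb{A}^{l(v)}\bigr),$$
using that $X^u\cap X_v^{\circ}=\emptyset$ unless $u\geq v$, so the common splitting applies termwise. Since a finite union distributes over a product with a fixed factor, the right-hand side equals
$$\Bigl(\bigcup_{u\lessdot w}(X^u\cap X_v^{\circ})\Bigr)\times\mathbb{A}^{l(v)} \;=\; (\partial X^w\cap X_v^{\circ})\times\mathbb{A}^{l(v)}.$$
Transporting the equality $\partial X^w\cap vX_{\id}^{\circ}=\partial((X^w\cap X_v^{\circ})\times\mathbb{A}^{l(v)})$ across the Kazhdan--Lusztig isomorphism yields the claim.

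The only point that requires any care is justifying that a single product decomposition of $vX_{\id}^{\circ}$ works simultaneously for $X^w$ and for all $X^u$ with $u\lessdot w$. This is exactly the content of the remark above that the $\mathbb{A}^{l(v)}$-factor is an ambient feature of the neighborhood $vX_{\id}^{\circ}$ rather than of $X^w$. Once this is observed, the proof is a purely formal manipulation of unions of closed subschemes.
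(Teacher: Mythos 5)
Your argument is correct, and it supplies exactly the "simple topological computation" that the paper leaves unproved: the key point, which you identify, is that the Kazhdan--Lusztig product decomposition of the neighborhood $vX_{\id}^{\circ}\cong \mathbb{A}^{l(v)}\times X_v^{\circ}$ (via the unipotent group of dimension $l(v)$) depends only on $v$ and restricts simultaneously to every $B$-stable closed subvariety, so it applies to $X^w$ and to each $X^u$ with $u\lessdot w$ at once, after which the identity is just distributing a finite union over a fixed product factor. This matches the intended reading of the lemma (the left-hand boundary being $\partial X^w\cap vX_{\id}^{\circ}$ transported through the isomorphism), so nothing is missing.
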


We will later see that the divisor class $[\partial X^w\cap X_v^{\circ}] = [-K_{X^w\cap X_v^{\circ}}]$. That is, $\partial X^w\cap X_v^{\circ}$ is an anticanonical divisor for $X^w\cap X_v^{\circ}$.

\begin{proposition}\label{Gorenstein-local}
The blow-up of $X^w$ along $\partial X^w$ is Gorenstein iff the blow-up of $X^w\cap X^{\circ}_v$ along $\partial X^w\cap X^{\circ}_v$ is Gorenstein for each $v\in W$.
\end{proposition}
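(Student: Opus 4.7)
The plan is to exploit the locality of the Gorenstein property together with the fact that the translates $\{vX^{\circ}_{\id}\}_{v \in W}$ form an open cover of $G/B$. This cover restricts to an open cover of $X^w$ by the affine opens $U_v := X^w \cap vX^{\circ}_{\id}$. Since blowing up commutes with open immersions, the blow-up $\pi : \widetilde{X^w} \to X^w$ restricts on each $U_v$ to the blow-up of $U_v$ along $\partial X^w \cap U_v$. Because the Gorenstein property can be checked on an open cover, $\widetilde{X^w}$ is Gorenstein if and only if the blow-up of each $U_v$ along its boundary is Gorenstein. So the proposition reduces to understanding these local blow-ups.

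Next I would invoke the Kazhdan-Lusztig Lemma together with the boundary computation just above to identify the pair $(U_v, \partial X^w \cap U_v)$ with $\bigl((X^w \cap X^{\circ}_v) \times \mathbb{A}^{l(v)},\; (\partial X^w \cap X^{\circ}_v) \times \mathbb{A}^{l(v)}\bigr)$. Since blowing up commutes with flat base change, blowing up $Y \times \mathbb{A}^{l(v)}$ along $D \times \mathbb{A}^{l(v)}$ produces $\widetilde{Y}_D \times \mathbb{A}^{l(v)}$, where $\widetilde{Y}_D$ denotes the blow-up of $Y$ along $D$. Consequently the blow-up of $U_v$ along its boundary is isomorphic to the blow-up of the Kazhdan-Lusztig variety $X^w \cap X^{\circ}_v$ along $\partial X^w \cap X^{\circ}_v$, crossed with affine space.

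The final step is to descend the Gorenstein property across this product. A scheme of the form $Y \times \mathbb{A}^n$ is Gorenstein at a point $(y,x)$ if and only if $Y$ is Gorenstein at $y$, since the local ring there is a localization of $\mathcal{O}_{Y,y} \otimes_k \mathcal{O}_{\mathbb{A}^n, x}$ and $\mathcal{O}_{\mathbb{A}^n, x}$ is regular over $k$, so the Gorenstein property transfers in both directions along this flat extension with Gorenstein fibres. Stringing together the three equivalences yields the proposition. I expect the main obstacle here to be bookkeeping rather than serious technical content: the subtlest ingredients are the compatibility of blow-up with the flat base change to $\mathbb{A}^{l(v)}$ and the two-way descent of the Gorenstein property across a product with affine space. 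Both are standard, but each must be stated carefully so that the equivalence in the proposition is honest in both directions rather than only one.
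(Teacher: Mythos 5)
Your proposal is correct and follows essentially the same route as the paper: check Gorensteinness on the open cover by the $X^w\cap vX^{\circ}_{\id}$, use the Kazhdan-Lusztig Lemma to identify each such open (and its boundary) with a Kazhdan-Lusztig variety (and its boundary) times $\mathbb{A}^{l(v)}$, note that the blow-up of the product is the product of the blow-up with affine space, and transfer the Gorenstein property across that product. Your write-up just makes explicit the compatibility of blow-up with open restriction and flat base change, and the two-way descent of Gorensteinness, which the paper asserts without elaboration.
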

\begin{proof}

Being Gorenstein is a local property, so the result can be checked on the open sets $X^w\cap vX^{\circ}_{\id}$. By the Kazhdan-Lusztig Lemma, we can restrict to checking $(X^w\cap X^{\circ}_v)\times \mathbb{A}^{l(v)}$. Since the blow-up of $(X^w\cap X^{\circ}_v)\times \mathbb{A}^{l(v)}$ along $(\partial X^w\cap X_v^{\circ})\times \mathbb{A}^{l(v)}$ is just $\widetilde{(X^w\cap X^{\circ}_v)}\times \mathbb{A}^{l(v)}$ (ie. the product of the total transform of $X^w\cap X_v^{\circ}$ with affine space), $\widetilde{X^w}$ is Gorenstein iff $\widetilde{(X^w\cap X^{\circ}_v)}$ is Gorenstein, and the result follows.
\end{proof}

\begin{example}\label{KL-example}
\normalfont Let us compute local equations for the variety $X^{53241}$ at $e_{\id}$. To do this we need to intersect the Schubert conditions from the permutation matrix

 \[ \left( \begin{array}{ccccc}
 0 & 0 & 0 & 0 & 1\\
 0 & 0 & 1 & 0 & 0 \\
 0 & 1 & 0 & 0 & 0 \\
 0 & 0 & 0 & 1 & 0 \\
 1 & 0 & 0 & 0 & 0\end{array} \right)\]
(which provide defining equations for $\pi^{-1}(X^{53241})$ where $\pi:G\rightarrow G/B$) with the coordinates from the open cell (which is isomorphic to $\mathbb{A}^{10}$)

 \[ \left( \begin{array}{ccccc}
 1 & 0 & 0 & 0 & 0\\
 z_{41} & 1 & 0 & 0 & 0 \\
 z_{31} & z_{32} & 1 & 0 & 0\\
 z_{21} & z_{22} & z_{23} & 1 & 0 \\
 z_{11} & z_{12} & z_{13} &  z_{14} & 1\end{array} \right)\]

 The Schubert conditions say that the southwest-most $2\times 3$ matrix has rank $<2$. Therefore the Kazhdan-Lusztig ideal is generated by the three $2\times 2$ minors:

$$I_{w,v} = \langle z_{11}z_{22} - z_{12}z_{21}, z_{11}z_{23} - z_{13}z_{21}, z_{12}z_{23} - z_{13}z_{22}\rangle.$$\qed
 \end{example}

\subsection{The Gorenstein property}\label{gorenstein}

The Gorenstein property is defined using the canonical bundle, so we must take care in discussing this property for singular varieties. Let $X$ be a normal variety of dimension $n$ and $U$ its regular locus. We can define the sheaf of differential forms of degree $n$ on $U$ without issues. This sheaf is invertible, and hence is of the form $\mathcal{O}_U(D)$ for some Cartier divisor $D$ on $U$. Since $\codim (X\setminus U) \geq 2$, by normality we can extend $D$ to a Weil divisor on all of $X$ which we call a canonical divisor $K_X$. A similar definition works for the anticanonical divisor. These divisors are unique up to linear equivalence.

\begin{definition}
An normal algebraic variety $X$ is Gorenstein if $\omega_X$ is invertible. That is, $X$ is Gorenstein if $K_X$ is a Cartier divisor.
\end{definition}

Since Schubert varieties are normal (see Lemma \ref{schubert-normal}), Kazhdan-Lusztig varieties are also normal, so this definition makes sense for our purposes.

\begin{example}\normalfont
In the $GL_n(\mathbb{C})$ case we can detect which $X^w$ are Gorenstein using interval pattern avoidance (see \cite{Woo-Yong}). For $n\leq 5$, all $X^w$ are Gorenstein except for

$$X^{53241}\text{, } X^{35142}\text{, } X^{42513}\text{, } X^{52431}.$$\qed
\end{example}

Frobenius splittings will be heavily used in later arguments which is why we  work with anticanonical divisors instead of canonical ones. This still works for the purposes of detecting the Gorenstein property. We thank Michel Brion for the discussion regarding this topic.

\begin{lemma}\label{anti-gorenstein}
Let $X$ be a normal algebraic variety such that the anticanonical divisor $-K_X$ is Cartier. Then $X$ is Gorenstein.
\end{lemma}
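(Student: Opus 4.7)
The proof plan is to reduce the statement to the observation that, for a Weil divisor on a normal variety, being Cartier is symmetric under negation. Once this is established, the lemma is immediate from the definition of Gorenstein given just before the statement: $X$ is Gorenstein iff the canonical divisor $K_X$ is Cartier, so it suffices to show that if $-K_X$ is Cartier then so is $K_X$.

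Concretely, I would first recall that a Weil divisor $D$ on a normal variety $X$ is Cartier precisely when it is locally principal, i.e.\ when there exists an open cover $\{U_i\}$ of $X$ together with rational functions $f_i \in k(X)^{\times}$ such that $D|_{U_i} = \operatorname{div}(f_i)$ for every $i$. Applying this criterion to $D = -K_X$, the hypothesis gives local data $(U_i, f_i)$ with $-K_X|_{U_i} = \operatorname{div}(f_i)$. Passing to reciprocals, we have
\[
K_X|_{U_i} \;=\; -\operatorname{div}(f_i) \;=\; \operatorname{div}(f_i^{-1}),
\]
so $(U_i, f_i^{-1})$ exhibits $K_X$ as locally principal, hence Cartier.

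An equivalent sheaf-theoretic formulation, which is perhaps slicker, is that on a normal variety the divisorial sheaf $\mathcal{O}_X(D)$ is reflexive of rank one and satisfies $\mathcal{O}_X(D)^{\vee} \cong \mathcal{O}_X(-D)$. Thus if $\mathcal{O}_X(-K_X)$ is invertible, so is its dual $\mathcal{O}_X(-K_X)^{\vee} \cong \mathcal{O}_X(K_X) = \omega_X$, which is the Gorenstein condition. Either formulation makes the proof a one-line consequence of a standard fact about Weil/Cartier divisors on normal schemes.

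There is no real obstacle here: the content of the lemma is simply the symmetry of the Cartier property under negation, and the payoff is that the Gorenstein condition for a normal variety can be detected via the anticanonical divisor $-K_X$. This is the formulation that will be convenient in what follows, since anticanonical divisors will arise naturally from Frobenius splittings on Schubert varieties and their degenerations.
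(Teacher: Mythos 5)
Your proof is correct and is essentially identical to the paper's: both take the local data $(U_i,f_i)$ exhibiting $-K_X$ as Cartier and pass to $(U_i,f_i^{-1})$ to see that $K_X$ is Cartier, hence $X$ is Gorenstein. The sheaf-theoretic reformulation you add is a fine equivalent packaging of the same observation.
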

\begin{proof}
If $-K_X$ is Cartier, then there exists a collection $\{(U_i,f_i)\}$ where the open subsets $U_i$ cover $X$ and $-K_X$ is the divisor of the rational function $f_i$. Then $K_X$ is the divisor associated to $\{(U_i,\frac{1}{f_i})\}$ and is also Cartier.
\end{proof}

\subsection{The subword complex}\label{subword}

We will see in subsequent sections that a Kazhdan-Lusztig variety $X^w\cap X_v^{\circ}$ can be degenerated to a Stanley-Reisner scheme associated to a subword complex. Subword complexes were first introduced by Knutson and Miller in \cite{KM2}. Given an ordered list $Q$ of simple reflections in a Coxeter group $\Pi$ and a fixed element $\pi\in \Pi$, one can ask about what structure can be placed on collection of subwords $Q$ that are also reduced expressions for $\pi$.

\begin{definition}
Given a word $Q = (\sigma_1,...,\sigma_m)$ where $\sigma_i$ a simple reflection in $\Pi$ and some fixed element $\pi\in\Pi$, we can define $\Delta(Q,\pi)$ to be the set of subwords $Q\setminus P$  where $P$ is a subword that contains a subsequence which is a reduced expression for $\pi$.
\end{definition}

The subword complex has some useful properties that are worth mentioning here (although we will only be concerned with the case where $\Pi$ is a Weyl group).

\begin{proposition}
$\Delta(Q,\pi)$ is a pure simplicial complex whose facets are the subwords $Q\setminus P$ such that $P\subset Q$ represents $\pi$.
\end{proposition}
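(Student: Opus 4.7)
The plan is to verify three things in order: that $\Delta(Q,\pi)$ is closed under taking subsets (so it really is a simplicial complex), that the maximal faces are exactly those $Q\setminus P$ for which $P$ itself is a reduced expression for $\pi$, and that purity then follows for free. The only input from Coxeter theory needed is the standard fact that every reduced expression for $\pi$ has length exactly $\ell(\pi)$.

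First I would check closure under subfaces. A subface of $Q\setminus P$ in the simplicial-complex sense corresponds to deleting \emph{more} letters from $Q$, i.e.\ has the form $Q\setminus P'$ with $P\subseteq P'\subseteq Q$. By hypothesis $P$ contains some reduced subword $P_0$ representing $\pi$; since $P_0\subseteq P\subseteq P'$, the larger word $P'$ also contains $P_0$, so $Q\setminus P'\in\Delta(Q,\pi)$. This shows $\Delta(Q,\pi)$ is downward-closed in the face poset.

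Next I would pin down the facets. Suppose $P\subseteq Q$ is itself a reduced expression for $\pi$, so that $|P|=\ell(\pi)$. Any face strictly containing $Q\setminus P$ is of the form $Q\setminus P''$ with $P''\subsetneq P$, hence $|P''|<\ell(\pi)$, so $P''$ cannot contain any reduced expression for $\pi$; this shows $Q\setminus P$ is maximal. Conversely, if $Q\setminus P$ is a facet, then by definition there is a reduced subword $P_0\subseteq P$ for $\pi$. If $P_0\subsetneq P$, then $Q\setminus P_0$ is a strictly larger face (it lies in $\Delta(Q,\pi)$ since $P_0$ represents $\pi$), contradicting maximality. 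Hence $P=P_0$ is a reduced expression for $\pi$.

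Purity is then immediate: every facet has cardinality $|Q|-\ell(\pi)$, and therefore dimension $|Q|-\ell(\pi)-1$. I do not expect any real obstacle: the proof is almost entirely unwinding the definition, pivoting on the invariance of length across reduced expressions. The substantive content of subword complexes (e.g.\ the Knutson--Miller theorem that $\Delta(Q,\pi)$ is a ball or sphere, which is what Theorem \ref{SR-gorenstein} ultimately exploits) lies well beyond this foundational statement.
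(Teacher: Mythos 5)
Your proof is correct, and it is essentially the standard argument: the paper itself offers no independent proof, simply citing Lemma 2.2 of \cite{KM}, whose proof is the same unwinding of the definition you give (downward closure because supersets of $P$ still contain a reduced word, maximality exactly when $P$ is itself reduced for $\pi$, purity from the constancy of length $\ell(\pi)$ across reduced expressions). No gaps to report.
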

\begin{proof}
See Lemma 2.2 in \cite{KM}.
\end{proof}

\begin{proposition}
The subword complex $\Delta(Q,\pi)$ is either a ball or sphere. A face $Q\setminus P$ is in the boundary of $\Delta(Q,\pi)$ iff $P$ has Demazure product $\neq \pi$.
\end{proposition}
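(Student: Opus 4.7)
The plan is to prove both claims simultaneously by induction on $|Q|$, using a vertex-decomposition argument as in Knutson--Miller. The base case $|Q|=0$ forces $\pi=e$; then $\Delta(Q,\pi)$ is the empty simplex, which is a ball, and the Demazure product of the empty subword is $e=\pi$, so there are no boundary faces to account for.

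For the inductive step, write $Q=Q'\cup\{s\}$ where $s$ is the last letter, and look at the vertex $s$ of $\Delta(Q,\pi)$. A direct check from the definition gives $\mathrm{link}(s)=\Delta(Q',\pi)$ in every case: adding $s$ to the removed positions simply passes to a subword of $Q'$. The behavior of the anti-star (deletion) splits according to the length condition on $\pi s$. In Case~A, $\ell(\pi s)>\ell(\pi)$, no reduced word for $\pi$ can end with $s$, so $F$ is a face of $\Delta(Q,\pi)$ iff $F\setminus\{s\}$ is a face of $\Delta(Q',\pi)$. Hence $\Delta(Q,\pi)$ is the cone $\{s\}*\Delta(Q',\pi)$, which by induction is a ball. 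In Case~B, $\ell(\pi s)<\ell(\pi)$, set $\pi'=\pi s$. If $s\notin F\subseteq Q'$, then $Q\setminus F$ contains a reduced word for $\pi$ iff $Q'\setminus F$ contains a reduced word for $\pi'$ (append $s$), giving $\mathrm{antistar}(s)=\Delta(Q',\pi')$. Therefore $\Delta(Q,\pi)=(\{s\}*\Delta(Q',\pi))\cup\Delta(Q',\pi')$, glued along $\Delta(Q',\pi)$.

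Next I would verify the key embedding statement in Case~B: $\Delta(Q',\pi)\subseteq\partial\Delta(Q',\pi')$. This is where the boundary characterization feeds back into the ball/sphere statement: if $P\subseteq Q'$ contains a reduced word for $\pi$ then $\mathrm{Dem}(P)\geq\pi>\pi'$, so $\mathrm{Dem}(P)\neq\pi'$, and by the inductive boundary characterization $Q'\setminus P\in\partial\Delta(Q',\pi')$. With this in hand, I would invoke the standard topological gluing principle: attaching a cone to a ball (resp.~sphere) $B$ along a subcomplex $C\subseteq\partial B$ produces a ball if $C\subsetneq\partial B$ and a sphere if $C=\partial B$. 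The inductive hypothesis applied to $\Delta(Q',\pi')$ and $\Delta(Q',\pi)$ thus gives that $\Delta(Q,\pi)$ is a ball or sphere, finishing the topological claim.

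Finally, I would derive the Demazure characterization of the boundary by tracking which faces escape being ``covered up'' in the gluing. The relevant identity is the recursion $\mathrm{Dem}(P\cup\{s\})=\mathrm{Dem}(P)*s$, which makes the Demazure condition behave well under the decomposition. In Case~A, $\pi*s=\pi s\neq\pi$, so $s\in P$ forces $\mathrm{Dem}(P)\neq\pi$; combined with the cone structure one checks that $Q\setminus P$ lies in the topological boundary iff $\mathrm{Dem}(P)\neq\pi$. In Case~B, $\pi*s=\pi$, so $s\in P$ does not automatically change the Demazure product, and one verifies that the boundary faces of $\Delta(Q,\pi)$ are exactly (i) faces of $\partial\Delta(Q',\pi')$ not in $\Delta(Q',\pi)$, and (ii) cones on boundary faces of $\Delta(Q',\pi)$, each of which matches the Demazure-$\neq\pi$ condition via the inductive hypothesis. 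The main obstacle is the careful topological gluing step together with the simultaneous bookkeeping of the Demazure condition across both cases; once that embedding $\Delta(Q',\pi)\subseteq\partial\Delta(Q',\pi')$ is established, the induction runs smoothly.
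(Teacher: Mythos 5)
The paper itself gives no argument here — it simply cites Theorem 3.7 of Knutson--Miller — so there is no internal proof to compare against. Your proposal essentially reconstructs the Knutson--Miller proof: your link/deletion computation at the last letter is exactly their vertex decomposition (their Lemma 2.5, stated there at the first letter with left multiplication), and your tracking of the Demazure condition is the content of their Theorem 3.7. Where you genuinely differ is the topological engine: KM deduce ball-or-sphere from vertex-decomposability, hence shellability, plus the fact that every codimension-one face lies in at most two facets (Danaraj--Klee), and then read off the boundary by counting facets containing codimension-one faces; you instead run a direct induction through a gluing lemma. That route can work, but two steps need repair before the sketch is a proof.

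First, the ``standard topological gluing principle'' as you state it is false for an arbitrary subcomplex $C\subseteq\partial B$: take $B$ a disk and $C$ two disjoint arcs in its boundary circle; the apex of the attached cone then has disconnected link, so the union is not a ball. What you actually need --- and what your induction does supply, since $\Delta(Q',\pi)$ is itself a ball or sphere of dimension $\dim\Delta(Q',\pi')-1$ by the inductive hypothesis and purity (KM Lemma 2.2) --- is the standard PL lemma that two $n$-balls glued along an $(n-1)$-ball contained in the boundary of each form an $n$-ball, together with the fact that gluing a ball to $B$ along the entire boundary sphere yields a sphere; you should say this, check the dimension count, and dispose of the degenerate case where $Q'$ contains no reduced word for $\pi$ (then $s$ is not a vertex and $\Delta(Q,\pi)=\Delta(Q',\pi')$), as well as the convention for the complex $\{\emptyset\}$ in the base case. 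Second, the Case~B sentence ``one verifies that the boundary faces are exactly (i) and (ii)'' is where the real content of Theorem 3.7 sits. Beyond the recursion $\dem(P\cup\{s\})=\dem(P)\star s$ and the monotonicity $\dem(P)\geq\pi$ whenever $P$ contains a reduced word for $\pi$ (KM Lemma 3.4, which you already invoke for the embedding $\Delta(Q',\pi)\subseteq\partial\Delta(Q',\pi')$), you must rule out that a face of $\Delta(Q',\pi)$ with Demazure product exactly $\pi$ (i.e.\ interior to $\Delta(Q',\pi)$) sneaks into the closure of $\partial\Delta(Q',\pi')\setminus\Delta(Q',\pi)$; this needs the observation that $\Delta(Q',\pi)$ is a full-dimensional manifold-with-boundary inside the sphere $\partial\Delta(Q',\pi')$, so its frontier there is precisely $\partial\Delta(Q',\pi)$. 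With these two points made precise, your induction goes through and gives a self-contained proof of the cited result; in Case~A your one-line reason that $s\in P$ forces $\dem(P)\neq\pi$ should also be expanded to cover the subcase $\dem(P\setminus\{s\})\neq\pi$, though that is a two-line fix.
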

\begin{proof}
See Theorem 3.7 in \cite{KM}.
\end{proof}

\begin{example}\normalfont
Let $\Pi =S_4$ and consider $\Delta(s_3s_2s_3s_2s_3,1432)$ where $s_i$ is the simple reflection swapping $i$ and $i+1$ and $\pi=1432$ is a permutation. Here $\pi$ has two reduced expressions in terms of the $s_i$, namely $s_3s_2s_3$ and $s_2s_3s_2$.
The facets will correspond to subwords $P$ of $Q$ such that $Q\setminus P$ is a reduced expression for $\pi$. For example, the subsequence chosen as the first and second elements of $Q$ is denoted by $s_3s_2---$ in the diagram below. Its complement is $--s_3s_2s_3$ which provides a reduced expression for $\pi$.

\begin{figure}[h]
\centering
\includegraphics[width=0.45\linewidth]{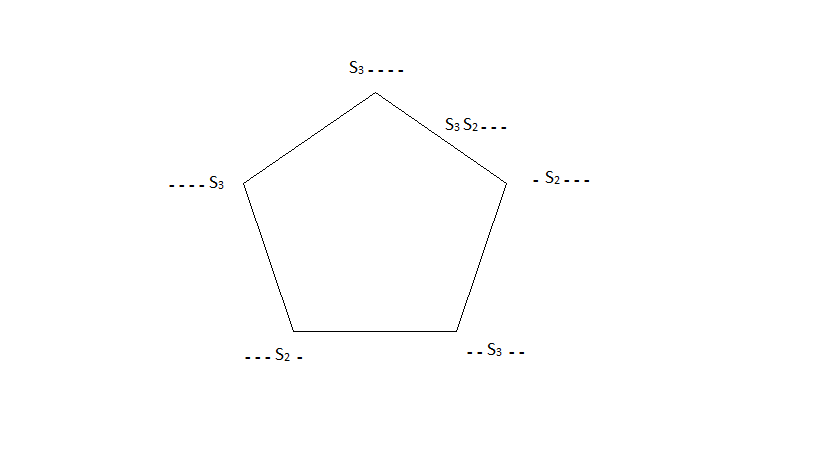}
\caption{}
\label{}
\end{figure}
\qed
\end{example}

\subsection{Frobenius splittings}\label{frobenius}
As motivation for the definition below, consider a commutative ring $R$, and let us observe that $R$ is reduced if the map $x\rightarrow x^n$ only sends $0$ to $0$ for $n>0$. We would like to write this condition as $\ker(x\rightarrow x^n)  =0$, but we cannot because the map is not linear in general. It does however make sense if we restrict to the case that $n$ is prime and $R$ contains the field $\mathbb{F}_p$, i.e. the map is the Frobenius endomorphism. Then, $R$ being reduced says that there exists a one-sided inverse to the Frobenius map.

A \textbf{Frobenius splitting} of an $\mathbb{F}_p$-algebra $R$ is a map $\varphi: R\rightarrow R$ which satisfies:\newline
\begin{align*}
&(i) \text{ } \varphi(a+b) = \varphi(a)+\varphi(b)\\
&(ii) \text{ }\varphi(a^pb) = a\varphi(b)\\
&(iii) \text{ }\varphi(1)=1\newline
\end{align*}

An ideal $I\subset R$ is compatibly split if $\varphi(I)\subset I$. See \cite{Knut} for a more detailed exposition on the subject.

\subsubsection{The trace map}\label{trace}
Let $R= \mathbb{F}_p[x_1,...,x_n]$. A simple example of a splitting on $R$ is the \textbf{standard splitting} defined on monomials $m$ (and extended linearly). It is the $p^{th}$ root map when $m$ is a $p^{th}$ power and 0 otherwise.
The ideals that are compatibly split by the standard splitting are precisely the Stanley-Reisner ideals.

More generally define the trace map $\tr(\cdot)$ first on monomials $m$ (and then extend linearly):
\[
\tr(m) =
  \begin{cases}
      \hfill  \frac{\sqrt[p]{m\prod_i x_i}}{\prod_i x_i}    \hfill & \text{ if $m\prod_i x_i$ is a $p^{th}$ power} \\
      \hfill 0 \hfill & \text{ otherwise} \\
  \end{cases}
\]

In general, given $f\in R$, $\varphi(g) =\tr(f^{p-1}g)$ defines a near splitting (taking $f= \prod_i x_i$ gives the standard splitting). If in addition we have $\tr(f^{p-1})=1$, then we have a Frobenius splitting. For example, $xyz$ always defines a splitting on $\mathbb{F}_p[x,y,z]$ and $x^2y+z^3+y^3+w^7$ defines a splitting on $\mathbb{F}_7[w,x,y,z]$.

\subsubsection{Frobenius splittings for schemes}\label{frobenius-scheme}

More generally, let $X$ be scheme (separated of finite type over an algebraically closed field of characteristic $p>0$). The absolute Frobenius morphism $F_X: X\rightarrow X$ is the identity on $X$ and the $p^{th}$ power map on $\mathcal{O}_X$. We say that $X$ is Frobenius split if the $\mathcal{O}_X$-linear map $F^{\#} : \mathcal{O}_X \rightarrow F_{*}\mathcal{O}_X$ splits (i.e. there exists a map $\varphi $ such that $\varphi\circ F^{\#}$ is the identity map).

\begin{lemma}
Frobenius split schemes are reduced.
\end{lemma}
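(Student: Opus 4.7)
The plan is to reduce to the affine case and then use the splitting axioms to annihilate nilpotents. Reducedness is a local property of the structure sheaf, so it suffices to show that for every affine open $U = \spec R \subset X$, the ring $R$ is reduced. The global splitting $\varphi : F_*\mathcal{O}_X \to \mathcal{O}_X$ of $F^{\#}$ restricts on $U$ to an additive map $\varphi_U : R \to R$ which, because $F_*\mathcal{O}_X$ is $\mathcal{O}_X$-linear via the Frobenius, satisfies precisely the three axioms (i)--(iii) of a Frobenius splitting stated in the excerpt (with $\varphi_U \circ F^{\#} = \id$ recovering axiom (iii) via $\varphi_U(1) = \varphi_U(1^p) = 1$).

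Next I would extract the key consequence of the axioms: for every $a \in R$,
\[
\varphi_U(a^p) \;=\; \varphi_U(a^p \cdot 1) \;=\; a \cdot \varphi_U(1) \;=\; a,
\]
using (ii) and (iii). So $\varphi_U$ is a genuine one-sided inverse to the Frobenius endomorphism on $R$, matching the motivating remark at the start of Section~\ref{frobenius}.

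Finally I would argue that this forces $R$ to be reduced. Let $x \in R$ satisfy $x^n = 0$. Pick an integer $k$ with $p^k \geq n$, so that $x^{p^k} = 0$. Applying the identity $\varphi_U(a^p) = a$ with $a = x^{p^{k-1}}$ gives
\[
x^{p^{k-1}} \;=\; \varphi_U\bigl( (x^{p^{k-1}})^p \bigr) \;=\; \varphi_U(x^{p^k}) \;=\; \varphi_U(0) \;=\; 0.
\]
Iterating $k$ times yields $x = 0$, so $R$ has no nonzero nilpotents. Since this holds on an affine cover, $X$ is reduced.

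There is no serious obstacle here; the argument is essentially formal once the definitions are unpacked. The only point worth stating carefully is the first one, namely that the sheaf-theoretic splitting really does induce on each affine open a map satisfying axioms (i)--(iii) in their ring-theoretic form, so that the algebraic computation above applies sectionwise.
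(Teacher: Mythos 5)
Your proof is correct. The paper itself gives no argument for this lemma, simply citing Proposition 1.2.1 of \cite{Brion-Kumar}, and your self-contained argument is essentially the standard proof given there: the splitting restricted to an affine open satisfies $\varphi(a^p)=a$, and iterating this kills any nilpotent since $x^{p^k}=0$ forces $x=0$. The one point you flag — that the sheaf-level splitting, via the $\mathcal{O}_X$-module structure on $F_*\mathcal{O}_X$, induces on each affine open an additive map satisfying the ring-theoretic axioms — is exactly the right thing to check, and you handle it correctly.
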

\begin{proof}
See Proposition 1.2.1 in \cite{Brion-Kumar}.
\end{proof}

Just as we used $(p-1)^{st}$ powers of $f\in R[x_1,...,x_n]$ to define splittings on $\mathbb{A}^n$, we can define Frobenius splittings in general using $(p-1)^{st}$ powers of sections of the anticanonical bundle of $X$. For example, nonsingular projective irreducible curves of genus $g\geq 2$ are not split. See Sections 1.3 and 1.4 in \cite{Brion-Kumar} for more details regarding spitting using $\omega_X^{-1}$ and refer to Section 1.6 for instruction on how to pass from characteristic $p$ statements to characteristic 0 results.

\begin{proposition}\label{schubert-split}
There exists a Frobenius splitting of $G/B$ which compatibly splits all Schubert subvarieties $X^w$ and and opposite Schubert subvarieties $X_w$.
\end{proposition}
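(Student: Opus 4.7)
The plan is to exhibit a single global section $\sigma$ of the anticanonical bundle $\omega_{G/B}^{-1}$ whose reduced vanishing locus contains every codimension-one Schubert divisor and every codimension-one opposite Schubert divisor, and then to invoke two standard facts: $\sigma^{p-1}$ defines a Frobenius splitting that compatibly splits every component of its zero locus, and compatibly split closed subschemes are closed under taking irreducible components of reduced intersections. Iterating this down Bruhat order reaches every $X^w$ and $X_w$.

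For the construction of $\sigma$, identify $\omega_{G/B}^{-1}$ with $\mathcal{L}(2\rho)$, where $2\rho = \sum_i \omega_i + \sum_i(-w_0 \omega_i)$ is the sum of positive roots. For each simple reflection $s_i$, the fundamental-representation space $H^0(G/B,\mathcal{L}(\omega_i))$ contains a $B$-eigenvector $p_i$ whose reduced zero locus is the Schubert divisor $X^{w_0 s_i}$, and a $B_-$-eigenvector $q_i$ whose reduced zero locus is the opposite Schubert divisor $X_{s_i}$. Setting
\[
\sigma \,=\, \prod_i p_i \cdot \prod_i q_i \,\in\, H^0\bigl(G/B,\,\omega_{G/B}^{-1}\bigr),
\]
the reduced zero divisor of $\sigma$ is $D = \bigcup_i X^{w_0 s_i} \cup \bigcup_i X_{s_i}$.

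To check that $\sigma^{p-1}$ defines a Frobenius splitting I would use the trace criterion of Section \ref{trace} on the opposite big cell $X_{w_0}^{\circ} \cong \mathbb{A}^N$ centered at $e_{\mathrm{id}}$. On that chart the $p_i$ and $q_i$ restrict (up to units) to products of the root coordinates $z_\alpha$, so $\sigma$ restricts to a unit times $\prod_{\alpha > 0} z_\alpha$. Then $\sigma^{p-1}$ is, up to a unit, the distinguished monomial $\prod_\alpha z_\alpha^{p-1}$ on which $\tr$ returns $1$, so $\varphi(f) = \tr(\sigma^{p-1} f)$ splits the Frobenius on the big cell. Since $\sigma$ is $T$-semi-invariant and $G/B$ is covered by $G$-translates of this chart, the splitting extends globally; equivalently one can verify splitting at each $T$-fixed point $e_v$ by the same local computation.

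Finally, for the compatibility statement, since $Z(\sigma^{p-1})$ is supported on $D$ and each component of $D$ is a prime divisor, the first standard fact gives that every Schubert divisor $X^{w_0 s_i}$ and every opposite Schubert divisor $X_{s_j}$ is compatibly split. The main obstacle is reaching every $X^w$ and $X_w$: Bruhat order is not a lattice, so these varieties are not themselves intersections of the divisors in $D$. The cleanest way past this is Ramanathan's pushforward: the canonical splitting of a Bott-Samelson resolution $Z_{\mathbf{w}_0} \to G/B$ (an iterated $\mathbb{P}^1$-bundle) compatibly splits each sub-Bott-Samelson variety, so its pushforward compatibly splits every Schubert variety, and a dual resolution built from the $B_-$-action handles every $X_w$. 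Identifying this pushforward with the splitting built from $\sigma^{p-1}$---e.g.\ by checking that both agree with the unique $B$-canonical and $B_-$-canonical splitting of $G/B$---then yields the proposition.
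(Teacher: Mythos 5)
The paper's own ``proof'' is just a citation to \cite{Brion-Kumar}, Section 2.3, and your $\sigma=\prod_i p_i\cdot\prod_i q_i$ is indeed the section used there; but as written your verification has a genuine gap at its key step. On the opposite big cell through $e_{\id}$ (which is $X^{\circ}_{\id}=B_-B/B$, not $X^{\circ}_{w_0}$, the latter being the point $e_{w_0}$), the $q_i$ are indeed units, since the opposite Schubert divisors are exactly the complement of that cell; however the $p_i$ restrict to generalized minors, which for rank $\geq 2$ are irreducible polynomials of degree $>1$ in the root coordinates, not monomials (already for $SL_3$ the two Schubert divisors restrict to $z_{31}$ and $z_{21}z_{32}-z_{31}$ up to sign). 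So $\sigma$ is \emph{not} a unit times $\prod_{\alpha>0}z_\alpha$ on any torus chart, and the statement you need, namely that the coefficient of $\bigl(\prod_\alpha z_\alpha\bigr)^{p-1}$ in $f^{p-1}$ (where $f=\prod_i p_i$ restricted to the cell) equals $1$, is precisely the nontrivial content of the theorem: cross terms could a priori contribute, and Brion--Kumar handle this by pulling the computation back to a Bott--Samelson resolution of $G/B$, where the relevant divisor becomes simple normal crossings and the section genuinely is a product of coordinates (alternatively via a Steinberg-module argument). Your proposal silently assumes the conclusion of that computation.

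The second load-bearing step is also left unproved. You correctly observe that compatibility for the divisors $X^{w_0 s_i}$, $X_{s_i}$ does not by itself yield all $X^w$ and $X_w$, and you propose to fix this by Ramanathan's pushforward from a Bott--Samelson resolution (for the $X^w$), a dual resolution (for the $X_w$), and then an identification of these splittings with the one defined by $\sigma^{p-1}$ via uniqueness of the $B$-canonical splitting. But none of these identifications is verified: you would need to show that the pushforward splittings and your $\sigma^{p-1}$ splitting are $B$-canonical (respectively $B_-$-canonical), that the $B$-canonical splitting of $G/B$ is unique, and that it simultaneously splits both families --- which is exactly the content of Brion--Kumar, Sections 2.3 and 4.1, i.e.\ the result being cited rather than a proof of it. So while your strategy matches the standard one from the cited source, both of the steps that carry the actual mathematical weight (the trace evaluation and the passage from divisors to all Schubert and opposite Schubert varieties by a single splitting) are missing or incorrect as stated.
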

\begin{proof}
See Section 2.3 in \cite{Brion-Kumar}.
\end{proof}

\begin{corollary}
There is a Frobenius splitting of the Kazhdan-Lusztig variety $X^w\cap X_v^{\circ}$ which compatibly splits $\partial X^w\cap X_v^{\circ}$.
\end{corollary}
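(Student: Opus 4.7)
The plan is to deduce the corollary directly from Proposition \ref{schubert-split} by invoking the standard inheritance properties of Frobenius splittings under three operations: intersection of compatibly split subschemes, union of compatibly split subschemes, and restriction to open subschemes. Let me outline each of these and then assemble them.

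First I would recall (or briefly verify) the three facts. (i) If $Y_1,Y_2\subset X$ are closed subschemes compatibly split by a Frobenius splitting $\varphi$ of $X$, defined by ideal sheaves $\mathcal{I}_1,\mathcal{I}_2$, then both $\mathcal{I}_1+\mathcal{I}_2$ and $\mathcal{I}_1\cap\mathcal{I}_2$ are preserved by $\varphi$, so the scheme-theoretic intersection $Y_1\cap Y_2$ and the scheme-theoretic union $Y_1\cup Y_2$ are each compatibly split. (ii) If $U\subset X$ is open, then $\varphi|_U$ is a Frobenius splitting of $U$, and for any compatibly split closed $Y\subset X$ the intersection $Y\cap U$ is compatibly split in $U$. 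Both facts are standard and appear, for instance, in \cite{Brion-Kumar}, so I would cite them rather than reprove them.

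Next I would apply these to our situation. By Proposition \ref{schubert-split}, there is a Frobenius splitting $\varphi$ of $G/B$ that compatibly splits every Schubert variety $X^{w'}$ and every opposite Schubert variety $X_v$. Applying (i) to the covers $v'\lessdot w$, the union
\[
\partial X^w \;=\; \bigcup_{v'\lessdot w} X^{v'}
\]
is compatibly split in $G/B$. Applying (i) again to the pair $X^w$ and $X_v$, the intersection $X^w\cap X_v$ is compatibly split in $G/B$, and similarly the intersection $\partial X^w \cap X_v$ is compatibly split (being the intersection of the compatibly split subschemes $\partial X^w$ and $X_v$).

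Finally, I would restrict to the open opposite Schubert cell $X_v^{\circ}\subset G/B$. By (ii), $\varphi|_{X_v^{\circ}}$ is a Frobenius splitting of $X_v^{\circ}$, and it compatibly splits both $X^w\cap X_v^{\circ}$ and $\partial X^w\cap X_v^{\circ}$. The induced splitting on the Kazhdan-Lusztig variety $X^w\cap X_v^{\circ}$ (obtained by applying (ii) once more, using that $X^w\cap X_v^{\circ}$ is itself a compatibly split closed subscheme of $X_v^{\circ}$) is a Frobenius splitting compatibly splitting the closed subscheme $\partial X^w\cap X_v^{\circ}$, which is exactly the statement. I do not anticipate any serious obstacle here; the entire argument is bookkeeping with the three inheritance properties of Frobenius splittings, and the only subtle point worth flagging is the use of (i) for scheme-theoretic union, which ensures that $\partial X^w$ — defined a priori as a set-theoretic union — is compatibly split as a reduced closed subscheme.
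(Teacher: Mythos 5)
Your argument is essentially the paper's own proof: the paper deduces the corollary in one line from Proposition \ref{schubert-split} together with the fact that intersections and unions of compatibly split subvarieties are again compatibly split, which is exactly the skeleton you spell out. One small correction to your final step: $X_v^{\circ}=B_-vB/B$ is open in the opposite Schubert variety $X_v$, not in $G/B$ (it has codimension $l(v)$ there), so rather than restricting $\varphi$ to $X_v^{\circ}$ as an open subset of $G/B$ you should first pass to the splitting induced on the compatibly split closed subvariety $X_v$ (or on $X^w\cap X_v$) and then restrict to its open subset $X_v^{\circ}$; with that adjustment, the same bookkeeping with your facts (i) and (ii) gives the statement.
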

\begin{proof}
The intersection and union of compatibly split subvarieties is again compatibly split, so the result follows by Proposition \ref{schubert-split}.
\end{proof}

\subsection{Bott-Samelson resolutions}\label{BSR}

Let $Q=(w_1,...,w_k)$, where $w_i\in W$. A generalized Bott-Samelson variety $BS^Q$ is the quotient of $\overline{Bw_1B}\times ...\times \overline{Bw_kB}$ by the $B^k$ action given by:

$$(b_1,...,b_k)\cdot (p_1,...,p_k):= (p_1b_1^{-1},b_1p_2b_2^{-1},...,b_{k-1}p_kb_k^{-1}).$$

If we let $\times^B$ denote the quotient by the above action for the $k=2$ case, we can write $BS^Q$ as $\overline{Bw_1B}\times^B ...\times^B \overline{Bw_kB}/B$.

We can define Dem(Q) inductively as

\[
\text{Dem((Q,$s_i$))} = \begin{cases}
\text{Dem(Q)$\cdot s_i$} & l(\text{Dem(Q$\cdot s_i$)}) > l(\text{Dem(Q)}) \\
\text{Dem(Q)} & \text{otherwise} \\
\end{cases}\]
where the Demazure product of the empty word is the identity. Then the $B$-equivariant map

$$\varphi_Q:\overline{Bw_1B}\times^B ...\times^B \overline{Bw_kB}/B \rightarrow G/B$$
defined by $(p_1,...,p_k) \rightarrow p_1\cdot ... \cdot p_k B/B$ maps onto $X^{\text{Dem(Q)}}$.

Sub Bott-Samelson varieties $BS^R\subset BS^{Q}$ are naturally defined by taking $R=(v_1,...,v_k)$ where $v_i\leq w_i$.

\begin{lemma}\label{BS-smooth}
Let $Q=(w_1,...,w_k)$ where $w_i$ is a simple reflection $s_{k_i}\in W$. Then $BS^Q$ is smooth.
\end{lemma}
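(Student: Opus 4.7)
The plan is to realize $BS^Q$ as an iterated $\mathbb{P}^1$-bundle tower and conclude smoothness by induction on $k$. The key fact I would use is that for a simple reflection $s_{k_i}$, the closure $\overline{Bs_{k_i}B}$ is the minimal parabolic subgroup $P_{k_i} \supset B$, and the projection $P_{k_i} \to P_{k_i}/B \cong \mathbb{P}^1$ is a Zariski-locally trivial principal $B$-bundle.

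First, I would define the forgetful map
\[
\pi_k : BS^Q = P_{k_1} \times^B \cdots \times^B P_{k_k}/B \longrightarrow BS^{Q'} = P_{k_1} \times^B \cdots \times^B P_{k_{k-1}}/B
\]
where $Q' = (w_1,\dots,w_{k-1})$, sending $[p_1,\dots,p_k]$ to $[p_1,\dots,p_{k-1}]$. The next step is to identify the fiber over a point: because the action in the last slot is only by the right $B$-action of the last $B^k$-factor, the fiber is canonically $P_{k_k}/B \cong \mathbb{P}^1$. Then I would argue that $\pi_k$ is Zariski-locally trivial: since $P_{k_{k-1}} \to P_{k_{k-1}}/B$ is a principal $B$-bundle (hence Zariski-locally trivial, as $B$ is a special group), the associated bundle construction shows that $\pi_k$ is a Zariski-locally trivial $\mathbb{P}^1$-bundle over $BS^{Q'}$.

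With the tower of fibrations in hand, I would proceed by induction on $k$. The base case $k=1$ gives $BS^{(w_1)} = P_{k_1}/B \cong \mathbb{P}^1$, which is smooth. For the inductive step, assume $BS^{Q'}$ is smooth; then $BS^Q$ is a Zariski-locally trivial $\mathbb{P}^1$-bundle over a smooth base, hence locally isomorphic to $U \times \mathbb{P}^1$ for $U \subset BS^{Q'}$ open, and therefore smooth.

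The main obstacle, I expect, is purely bookkeeping: verifying carefully that the $B^k$-quotient really does present $BS^Q$ as an associated bundle $P_{k_1} \times^B (\cdots \times^B P_{k_k}/B)$ so that the projection $\pi_k$ is well-defined and has the claimed $\mathbb{P}^1$ fibers. Once that quotient-by-stages reformulation is in place, local triviality of $B$-bundles takes care of everything. No deep input is needed beyond the fact that $\overline{Bs_iB}$ is the minimal parabolic for a simple reflection, which is standard.
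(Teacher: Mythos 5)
Your proposal is correct and follows essentially the same route as the paper: the paper's proof also identifies $\overline{Bs_{k_i}B}$ with the minimal parabolic $P_{k_i}$ and observes that the projections forgetting the last factor exhibit $BS^Q$ as an iterated $\mathbb{P}^1$-bundle, hence smooth. Your write-up simply makes explicit the associated-bundle/local-triviality bookkeeping that the paper leaves implicit.
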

\begin{proof}
As seen in \cite{DaSilva-strict}: In this case, $\overline{Bw_iB}\cong P_{k_i}$ is a minimal parabolic subgroup of $G$. Then $BS^Q$ is just an iterated $\mathbb{P}^1$-bundle and $\varphi_Q$ defines a resolution of singularities for $X^w$.
\end{proof}

See \cite{Escobar} for an example of how to visualize the $BS^Q$ in Lemma \ref{BS-smooth} in the context of flag varieties. Although $BS^Q$ is not smooth in general, it is Cohen-Macaulay and normal.

\begin{lemma}
The Bott-Samelson variety $BS^Q$ is both Cohen-Macaulay and normal.
\end{lemma}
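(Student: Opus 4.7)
The plan is to proceed by induction on the length $k=|Q|$ of the word $Q=(w_1,\ldots,w_k)$. For the base case $k=1$, $BS^Q$ is simply $\overline{Bw_1B}/B=X^{w_1}$, which is Cohen--Macaulay and normal by Lemma \ref{schubert-normal}. For the inductive step, set $Q'=(w_2,\ldots,w_k)$ and use the first-coordinate projection
$$\pi:BS^Q = \overline{Bw_1B}\times^B BS^{Q'} \longrightarrow \overline{Bw_1B}/B=X^{w_1},\quad [(p_1,\ldots,p_k)]\mapsto p_1B,$$
which realizes $BS^Q$ as the associated bundle built from the principal right $B$-bundle $\overline{Bw_1B}\to X^{w_1}$ and the left $B$-space $BS^{Q'}$.

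Next, I would verify that $\pi$ is Zariski-locally trivial. Since $B$ is a special group in the sense of Grothendieck, every principal $B$-bundle is Zariski-locally trivial; alternatively, $X^{w_1}$ admits a Zariski open cover by translates of the big cell on which the projection $\overline{Bw_1B}\to X^{w_1}$ visibly splits. Consequently $\pi$ is Zariski-locally of the form $U\times BS^{Q'}\to U$ for $U\subset X^{w_1}$ open.

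To conclude, I would invoke the facts that Cohen--Macaulayness and normality are Zariski-local, and that over an algebraically closed field of characteristic zero the product of two Cohen--Macaulay (respectively normal) varieties is Cohen--Macaulay (respectively normal). Combining the inductive hypothesis that $BS^{Q'}$ is Cohen--Macaulay and normal with the base case applied to $X^{w_1}$, the local product description forces the same properties for $BS^Q$. Irreducibility is inherited along the way since each $X^{w_i}$ is irreducible and the total space of a locally trivial bundle with irreducible base and fiber is irreducible.

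The principal conceptual obstacle is the verification that the total space of an associated bundle inherits Cohen--Macaulayness and normality from its base and fiber; this is unobstructed once Zariski-local triviality is in hand, which in turn reduces to the classical specialness of the Borel $B$. Everything else is routine: the inductive fiber-bundle structure essentially builds $BS^Q$ from Schubert varieties that are already known to be Cohen--Macaulay and normal.
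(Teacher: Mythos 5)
Your argument is correct and is essentially the paper's own proof: an induction on the length of $Q$ using the locally trivial bundle structure of $BS^Q$, the locality of Cohen--Macaulayness and normality, and the known fact (Lemma \ref{schubert-normal}) that Schubert varieties have both properties. The only cosmetic difference is that you fiber over the first factor $X^{w_1}$ with fiber $BS^{Q'}$, whereas the paper projects off the last factor to view $BS^Q$ as an iterated Schubert-variety bundle; the two inductions are interchangeable.
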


\begin{proof}
As seen in \cite{DaSilva-strict}: Using the projection map

$$\overline{Bw_1B}\times^B ...\times^B \overline{Bw_kB}/B \rightarrow \overline{Bw_1B}\times^B ...\times^B \overline{Bw_{k-1}B}/B $$
it is easy to see that $BS^Q$ is just an iterated Schubert variety bundle over $X^w$. It is well known that $X^w$ is both Cohen-Macaulay and normal. Since both of these conditions are local, and since the bundle map above is locally trivial, it follows by induction that $BS^Q$ also has these properties.
\end{proof}

\begin{lemma}\label{BS-birational}
 Let $\varphi_Q$ be a Bott-Samelson map with $Q=(w_1,...,w_k)$. Then $\varphi_Q$ is always a proper map. Furthermore, $\varphi_Q$ is birational if and only if $Dem(Q) = \prod w_i$ (in which case we say $Q$ is reduced).
\end{lemma}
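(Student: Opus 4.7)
The plan is to treat properness and birationality separately.

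For properness, I would exhibit $BS^Q$ as a tower of Schubert bundles. The projection forgetting the last factor gives a morphism $BS^Q \to BS^{Q'}$ with $Q'=(w_1,\dots,w_{k-1})$ whose fibers are $\overline{Bw_kB}/B = X^{w_k}$, a projective variety. By induction on $k$, $BS^Q$ is projective; since projective morphisms to a separated scheme are proper, $\varphi_Q\colon BS^Q \to G/B$ is proper (and its image is closed).

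For birationality, I would first identify the image. Using the standard Bruhat multiplication $\overline{Bu B}\cdot \overline{BvB} = \overline{B\,\mathrm{Dem}(u,v)\,B}$ in $G$, an induction on $k$ matching the inductive definition of $\mathrm{Dem}(Q)$ shows $\mathrm{im}(\varphi_Q)=X^{\mathrm{Dem}(Q)}$. Since $\dim BS^Q=\sum_i\ell(w_i)$ while $\dim X^{\mathrm{Dem}(Q)}=\ell(\mathrm{Dem}(Q))$, and $\ell(\mathrm{Dem}(Q))\le \sum_i\ell(w_i)$ with equality iff $\prod_i w_i$ is a reduced product (i.e.\ iff $\mathrm{Dem}(Q)=\prod_i w_i$), no map $\varphi_Q$ can be birational unless $Q$ is reduced. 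This settles the ``only if'' direction purely by dimension count.

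Conversely, assuming $Q$ is reduced, I would produce an explicit open subset of $BS^Q$ on which $\varphi_Q$ restricts to an isomorphism. The key input, which I would quote from the standard theory of Bruhat decompositions, is that whenever $\ell(uv)=\ell(u)+\ell(v)$ the multiplication $BuB\times^B BvB\to BuvB$ is an isomorphism of $B$-varieties. Iterating this $k-1$ times under the reducedness hypothesis yields an isomorphism
\[
(Bw_1B)\times^B\cdots\times^B(Bw_kB)/B \;\xrightarrow{\ \sim\ }\; B(\textstyle\prod_i w_i)B/B,
\]
between dense open subsets of $BS^Q$ and of $X^{\mathrm{Dem}(Q)}=X^{\prod w_i}$, respectively. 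Thus $\varphi_Q$ is birational.

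The main obstacle, such as it is, is checking the iterated multiplication-is-an-isomorphism statement in the reduced case; but this reduces to the length-additive two-factor case, which is classical, so I would cite it rather than reprove it. All other steps are dimension counts and routine manipulations of the Bruhat decomposition.
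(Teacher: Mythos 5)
Your route is genuinely different from the paper's, which simply notes that $BS^Q$ and $X^w$ are projective (hence $\varphi_Q$ is proper) and quotes from \cite{DaSilva-strict} that $\varphi_Q$ is an isomorphism away from the sub-Bott--Samelson varieties $BS^R$ with $R$ not reduced; your properness argument via the iterated Schubert-bundle structure and your two-step birationality argument (dimension count, then iterated length-additive Bruhat multiplication) would be a self-contained substitute. However, there is a genuine gap at the pivot of your birationality argument: the asserted equivalence ``$\ell(\mathrm{Dem}(Q))=\sum_i\ell(w_i)$ iff $\mathrm{Dem}(Q)=\prod_i w_i$'' is false in exactly the direction your ``if'' half needs. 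Length-additivity does force $\mathrm{Dem}(Q)=\prod_i w_i$, but not conversely: for a single simple reflection $s$ and $Q=(s,s,s)$ one has $\mathrm{Dem}(Q)=s=\prod_i w_i$ while $\sum_i\ell(w_i)=3$; correspondingly $BS^Q$ is a three-dimensional iterated $\mathbb{P}^1$-bundle mapping onto $X^s\cong\mathbb{P}^1$, so $\varphi_Q$ is not birational even though the criterion $\mathrm{Dem}(Q)=\prod_i w_i$ holds.

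Concretely, from $\mathrm{Dem}(Q)=\prod_i w_i$ alone you cannot deduce the stepwise additivity $\ell(w_1\cdots w_jw_{j+1})=\ell(w_1\cdots w_j)+\ell(w_{j+1})$ needed to iterate the classical isomorphism $BuB\times^B BvB\xrightarrow{\ \sim\ }BuvB$; the example above shows the needed additivity (and indeed the conclusion) can fail, so this is an imprecision inherited from the lemma's literal wording rather than a repairable detail of your write-up as it stands. The hypothesis that makes the ``if'' direction work --- and what ``$Q$ reduced'' must mean here, as in \cite{DaSilva-strict} --- is $\ell(\mathrm{Dem}(Q))=\sum_i\ell(w_i)$, equivalently that concatenating reduced words for the $w_i$ yields a reduced word. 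With that reading your argument is complete: the dimension count gives ``only if'' (birationality forces length-additivity, hence $\mathrm{Dem}(Q)=\prod_i w_i$), and the iterated multiplication isomorphism between the dense open subsets $(Bw_1B)\times^B\cdots\times^B(Bw_kB)/B\subset BS^Q$ and $B(\prod_i w_i)B/B\subset X^{\mathrm{Dem}(Q)}$ gives ``if''. So replace the false equivalence by the length-additive notion of reducedness; the remaining ingredients (projectivity of $BS^Q$, image equal to $X^{\mathrm{Dem}(Q)}$, and the length-additive multiplication isomorphism) are standard and correctly deployed.
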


\begin{proof}
As seen in \cite{DaSilva-strict}: Since $BS^Q$ and $X^w$ are projective, they are both proper over $\mathbb{C}$ so $\varphi_Q$ must be proper also. The second statement follows from the fact that $\varphi_Q$ is an isomorphism away from the sub Bott-Samelson varieties $BS^R$ where $R$ is not reduced. Then $Q$ is reduced iff $R\neq Q$.
\end{proof}

\subsection{Geometric vertex decomposition}\label{GVD}
Before we describe the degeneration of $X^w\cap X_v^{\circ}$ to the Stanley-Reisner scheme, let us define the \textbf{geometric vertex decomposition} (see the work of Knutson in  \cite{Knut-patches} and \cite{Knut}).

We start with an integral variety $X\subset \mathbb{A}^n$. Let us write $\mathbb{A}^n=H\times L$, with $H$ a hyperplane and $L$ a line. Consider the action of $\mathbb{G}_m$ on $\mathbb{A}^n$ by
$$z\cdot(x,l) = (x,zl)$$
and let us define

$$X'= \lim_{t\rightarrow 0} t\cdot X.$$

Here $X'$ is a limit scheme. That is, it is the zero fibre of

$$ \overline{\bigcup_{z\in\mathbb{G}_m} z\times (z\cdot X)} \subset \mathbb{A}^1\times (H\times L).$$

There is a simple way to compute $X'$ set-theoretically. Let $\Pi\subset H$ be the closure of the image of the projection of $X$ to $H$. We will denote the closure of $X$ in $H\times (L\cup\{\infty\})$ by $\bar{X}$. Finally, define $\Lambda\subset H$ by $\Lambda\times\{\infty\} = \bar{X}\cap(H\times\{\infty\}) = \bar{X}\setminus X$.

\begin{lemma}
The limit scheme $X'$ of the geometric vertex decomposition can be described as a set by
$$X' = (\Pi\times\{0\})\cup_{\Lambda\times 0} (\Lambda\times L).  $$
\end{lemma}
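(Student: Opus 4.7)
The plan is to unfold the definition of $X'$ set-theoretically via an arc criterion, then split the analysis by whether the $L$-coordinate of the limit point is zero or not, and in each case match up with one of the two pieces of the asserted union.

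First I would observe that, since $X'$ is the zero fiber of the closure of $\bigcup_{z\in\mathbb{G}_m} \{z\}\times (z\cdot X)$ in $\mathbb{A}^1 \times (H\times L)$, a point $p \in \{0\}\times (H\times L)$ lies on $X'$ if and only if there is an analytic (or formal) arc $t \mapsto x(t) = (h(t), l(t)) \in X$, defined for small nonzero $t$, such that $t\cdot x(t) \to p$ as $t\to 0$. Writing $p = (h, l)$, this is the same as $h(t)\to h$ and $t\, l(t) \to l$.

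Next I would split into two cases. In the case $l=0$, any arc $x(t)\in X$ with $l(t)$ bounded (e.g.\ constant) and $h(t)\to h$ gives $t\cdot x(t)\to (h,0)$, so every $h\in\Pi$ yields a point of $X'$; conversely any limit $(h,0)\in X'$ has $h$ in the closure of $\pi_H(X)$, which is $\Pi$. Hence $X'\cap(H\times\{0\}) = \Pi\times\{0\}$. In the case $l\neq 0$, the requirement $t\,l(t)\to l\neq 0$ forces $l(t)\to\infty$ in $L\cup\{\infty\}$, so viewing the arc inside $\bar X\subset H\times(L\cup\{\infty\})$ its limit must lie in $\bar X\cap (H\times\{\infty\}) = \Lambda\times\{\infty\}$, giving $h\in\Lambda$. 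For the converse, fix any $h\in\Lambda$: by the valuative criterion applied to the projective compactification, there is an arc $s\mapsto (h(s),l(s))\in X$ with $(h(s),l(s))\to (h,\infty)$; now reparametrize by setting $t(s):=l/l(s)\to 0$, so that $t(s)\cdot(h(s),l(s))=(h(s),l)\to (h,l)$. Hence $\Lambda\times L\subset X'$.

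Combining the two cases gives $X' = (\Pi\times\{0\})\cup(\Lambda\times L)$ set-theoretically; since $\Lambda\subset\Pi$ (because $\Lambda$ is the image under $\pi_H$ of $\bar X\cap (H\times\{\infty\})\subset\bar X$, whose $H$-projection lies in $\Pi$), the two pieces overlap exactly along $\Lambda\times\{0\}$, justifying the fibered-union notation. The main obstacle is the converse for the $\Lambda\times L$ piece: one has to lift an arc that runs to infinity in $\bar X$ and rescale the $\mathbb{G}_m$-parameter so the limit lands at a prescribed finite point $(h,l)$; the remaining containments are direct from the arc criterion and the definitions of $\Pi$ and $\Lambda$.
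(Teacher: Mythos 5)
The paper states this lemma without proof (it is taken from Knutson's geometric-vertex-decomposition papers), so I am judging your argument on its own. Your overall route --- reduce to a limit/arc criterion for membership in the zero fibre, split on whether the $L$-coordinate of the limit is $0$ or not, use $\bar X\cap(H\times\{\infty\})=\Lambda\times\{\infty\}$ in the second case, and glue along $\Lambda\times\{0\}$ using $\Lambda\subset\Pi$ --- is the right one, and your treatment of the $\Lambda\times L$ piece (reparametrizing by $t(s)=l/l(s)$) is correct. But there is a genuine flaw in how you set up the criterion: you couple the $\mathbb{G}_m$-parameter to the arc parameter by insisting $z=t$. A point of $X'$ is a limit of points $(z,z\cdot x)$ with $z\to 0$ and $x\in X$ varying \emph{independently}; the correct arc criterion is that there exist arcs $z(s)\to 0$ in $\mathbb{G}_m$ and $x(s)=(h(s),l(s))\in X$ with $h(s)\to h$ and $z(s)l(s)\to l$. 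With $z=t$ forced, the ``only if'' direction of your criterion is false, and your inclusion $\Pi\times\{0\}\subset X'$ breaks: you assert one can choose an arc with $h(t)\to h$ and $l(t)$ bounded (``e.g.\ constant''), but such arcs need not exist. The paper's own example $X=\{xy=1\}$ shows this: for $h=0\in\Pi=H$, every arc in $X$ with $h(t)\to 0$ has $l(t)=1/h(t)\to\infty$, and an order count shows no analytic arc with $z=t$ can have $t\,l(t)\to 0$ while $h(t)\to 0$; yet $(0,0)$ does lie in $X'$ (the fibre of $\{xu=z\}$ over $z=0$ is the two axes).

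The repair is exactly the flexibility you already use for the $\Lambda\times L$ case: decouple the two parameters. Given $h\in\Pi$, pick any arc (or sequence) $(h(s),l(s))\in X$ with $h(s)\to h$ and then choose $z(s)\to 0$ decaying fast enough that $z(s)l(s)\to 0$ (e.g.\ $z(s)=s^{N}$ with $N$ exceeding the pole order of $l(s)$); then $(z(s),h(s),z(s)l(s))\to(0,h,0)$, giving $\Pi\times\{0\}\subset X'$. With the criterion restated in this decoupled form, your forward inclusions in both cases go through verbatim ($z(s)\to 0$ and $z(s)l(s)\to l\neq 0$ still force $l(s)\to\infty$, hence $h\in\Lambda$), and the rest of your argument, including $\Lambda\subset\Pi$ and the identification of the overlap as $\Lambda\times\{0\}$, is fine.
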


\begin{example}
Let us write $\mathbb{A}^2=H\times L$ where $H$ is the $x$-axis and $L$ is the $y$-axis. Consider the variety $X$ defined by $xy-1=0$.  The closure of the projection of $X$ onto $H$ is exactly $H$. Similarly, it is not hard to see that $\Lambda\times L$ is the $y$-axis, and by the Lemma, $X'$ is the union of the two axes.\qed
\end{example}

This definition generalizes if we take $H$ to be some general scheme, as long as $L$ remains $\mathbb{A}^1$.

The degeneration that appears in the next section can be shown inductively by using geometric vertex decomposition applied to a Kazhdan-Lusztig variety $KL_1$ where $H$ can be chosen so that $\Pi = KL_2$ and $\Lambda =KL_3$ are two other Kazhdan-Lusztig varieties. More precisely, let $X_w|_v := X_w \cap (vN_{-}B_+/B_+)$ where $vN_{-}B_+/B_+$ is the permuted big cell (called a Schubert patch on $X_w$).

\begin{theorem}[Theorem 2, \cite{Knut-patches}]
  Let $v,w$ be elements of $W$. If $X_w|_v \neq \emptyset$, then $v \geq w$ in the Bruhat order. Assume this hereafter. 
  
  If $v = 1$, then $w = 1$ and $X_w|_v = N_{-}B_+/B_+$. Otherwise, there exists a simple root $\alpha$ such that $vr_{\alpha} < v$ in the Bruhat order. Let $X'$ be the degeneration of $X_w|_v$ described above.
  
\begin{itemize}
\item If $wr_{\alpha} > w$, then $X' = X_w|_v$ (the limiting process is trivial), and 
$$X_w|_v \cong \Pi \times \mathbb{A}^1_{-v\cdot\alpha}\text{ , } X_w|_{vr_{\alpha}}\cong \Pi\times  \mathbb{A}^1_{v\cdot\alpha}$$
for the same $\Pi$.
\item If $wr_{\alpha} < w$ but $w \nleq vr_{\alpha}$, then $X' = X_w|_v$ (again, the limiting process is trivial), and $$X_{wr_{\alpha}}|_{vr_{\alpha}}\cong X_w|_v \times \mathbb{A}^1_{v\cdot\alpha}.$$ 
\item If $wr_{\alpha}< w \leq vr_{\alpha}$, then $X'$ is reduced, and has two components: $$X' = (\Pi \times {0}) \cup_{\Lambda\times \{0\}}(\Lambda\times \mathbb{A}^1_{- v\cdot\alpha})$$ where $\Pi \times \mathbb{A}^1_{v\cdot\alpha} \cong X_{wr_{\alpha}}|_{vr_{\alpha}}$ and $\Lambda \times \mathbb{A}^1_{v\cdot\alpha}\cong X_w|_{vr_{\alpha}}.$
\end{itemize}
\end{theorem}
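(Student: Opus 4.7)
The plan is to prove each assertion by direct computation in the big cell $vN_-B_+/B_+$, writing it as a product of root-subgroup factors adapted to $v$. The opening claim is the standard Bruhat-order characterization: $vN_-B_+/B_+$ is a $T$-invariant open neighborhood of the $T$-fixed point $vB_+/B_+$, so $X_w$ meets it if and only if this fixed point lies in $X_w$, i.e.\ if and only if $v\geq w$. The base case $v=1$ then forces $w=1$ and gives $X_1|_1 = N_-B_+/B_+$ immediately.

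For the inductive step, choose a simple root $\alpha$ with $vr_\alpha<v$ (which exists since $v\neq 1$), and use the root-subgroup factorization $N_- = U_{-v\cdot\alpha}\times N_-'$ to write $vN_-B_+/B_+ \cong L\times H$, where $L\cong \mathbb{A}^1_{-v\cdot\alpha}$ is the one-parameter factor coming from $U_{-v\cdot\alpha}$. A $\mathbb{G}_m$-action through a cocharacter of $T$ pairing nontrivially with $-v\cdot\alpha$ scales $L$ and fixes $H$, and the limit scheme $X'$ is computed by the geometric vertex decomposition lemma in terms of the projection $\Pi\subset H$ and the closure-at-infinity $\Lambda\subset H$. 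The crucial geometric identification is that $H$ is naturally isomorphic, via a suitable right-multiplication by a lift of $r_\alpha$, to an open subset of the adjacent big cell $vr_\alpha N_-B_+/B_+$. Under this isomorphism, $\Pi$ becomes the sweep of $X_w|_v$ by $U_{v\cdot\alpha}$-translation and $\Lambda$ becomes the ``escape to infinity'' of $X_w$ along $L$, which I would recognize as the Kazhdan-Lusztig varieties $X_{wr_\alpha}|_{vr_\alpha}$ and $X_w|_{vr_\alpha}$ respectively, whenever these are non-empty.

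The three cases then arise from a case analysis comparing $w$ with $wr_\alpha$ and with $vr_\alpha$. In Case 1 ($wr_\alpha>w$), the variety $X_w|_v$ is itself a product $\Pi\times L$ because $w$ admits no descent along $r_\alpha$; the $\mathbb{G}_m$-action preserves it, so $X' = X_w|_v$, and the isomorphisms with $\Pi\times \mathbb{A}^1_{\pm v\cdot\alpha}$ follow from comparing $H$-factors of the two big cells at $v$ and $vr_\alpha$. In Case 2 ($wr_\alpha<w$ but $w\nleq vr_\alpha$), the opening claim forces $X_w|_{vr_\alpha}=\emptyset$, so $\Lambda$ is empty and again $X'=X_w|_v$; the product isomorphism for $X_{wr_\alpha}|_{vr_\alpha}$ then falls out by applying the same factorization to $wr_\alpha$, which does descend along $\alpha$. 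Case 3 ($wr_\alpha<w\leq vr_\alpha$) is the genuinely non-trivial degeneration, where both $\Pi$ and $\Lambda$ are non-empty and the lemma yields the stated two-component decomposition.

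The main obstacle is Case 3: one must verify that the flat limit $X'$ is \emph{reduced} and that its two components carry the precise scheme structures of $X_{wr_\alpha}|_{vr_\alpha}$ and $X_w|_{vr_\alpha}$, not merely agree with them set-theoretically. Reducedness is the delicate point, since $\mathbb{G}_m$-degenerations of reduced schemes routinely acquire nilpotents. The approach I would take is to exploit the Frobenius splitting of $G/B$ that compatibly splits all Schubert and opposite Schubert subvarieties (Proposition \ref{schubert-split}): because the $\mathbb{G}_m$-action arises from a cocharacter of $T$, the splitting lifts to the total space of the degeneration family over $\mathbb{A}^1$ compatibly with its closed subschemes and restricts to a splitting of the special fibre $X'$. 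Since compatibly split subschemes are reduced, this yields the reducedness of $X'$, and the two irreducible components are then pinned down by matching their $T$-weights against those of the candidate Kazhdan-Lusztig varieties $X_{wr_\alpha}|_{vr_\alpha}$ and $X_w|_{vr_\alpha}$ inside the big cell at $vr_\alpha$.
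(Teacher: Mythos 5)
First, a framing remark: the paper does not prove this statement at all --- it is quoted verbatim from \cite{Knut-patches} as imported background --- so there is no in-paper argument to compare against; your outline follows the same broad strategy as the source (factor the patch $vN_-B_+/B_+$ along the root direction of weight $-v\cdot\alpha$, apply the geometric vertex decomposition lemma, identify $\Pi$ and $\Lambda$ group-theoretically with the neighbouring patches, and use Frobenius splitting for reducedness). However, two of the delicate steps have genuine gaps as written. The most serious is your reducedness mechanism in Case 3: you justify transporting the splitting to the degeneration ``because the $\mathbb{G}_m$-action arises from a cocharacter of $T$,'' but the $\mathbb{G}_m$ used in the geometric vertex decomposition scales only the single coordinate $L=\mathbb{A}^1_{-v\cdot\alpha}$ and fixes $H$ pointwise, whereas any cocharacter of $T$ acts on every root coordinate of the patch with weight given by its pairing with the corresponding root $v\cdot\beta$; for rank $\geq 2$ those remaining roots span the character space, so no cocharacter of $T$ realizes this action. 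The conclusion can be rescued --- one needs the more general fact that a weight (Gr\"obner) degeneration of a compatibly split ideal is again compatibly split, obtained by taking a suitable leading form of the splitting written as $\tr(f^{p-1}\cdot)$ on the patch, together with a word about passing from characteristic $p$ back to characteristic $0$ --- but as stated the key point that $X'$ is reduced is not established.

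The second gap is Case 2: from $w\nleq vr_\alpha$ you correctly conclude $X_w|_{vr_\alpha}=\emptyset$, hence $\Lambda=\emptyset$, but emptiness of $\Lambda$ alone does not force the limit to be trivial --- the graph of a nonzero regular function $H\to L$ has $\Lambda=\emptyset$ yet degenerates to $\Pi\times\{0\}\neq X$. What is actually needed is that in this case $X_w|_v$ already lies in the zero section $\{l=0\}$ (equivalently, is $\mathbb{G}_m$-invariant), which follows from the fibrewise analysis of $X_w$ inside the $\mathbb{P}^1$-bundle $\pi_\alpha$: when $wr_\alpha<w$ the intersection of $X_w$ with a fiber over its image is either the whole fiber or a single point, and under $w\nleq vr_\alpha$ that point avoids the $vr_\alpha$-chart. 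Relatedly, in Case 3 ``matching $T$-weights'' does not pin down the scheme structures of the two components; you should instead rely on the group-theoretic identifications you gesture at --- $\pi_\alpha^{-1}\pi_\alpha(X_w)=X_{wr_\alpha}$ when $wr_\alpha<w$, and $B_-$-stability of $X_w$, which makes $X_w|_{vr_\alpha}$ invariant under translation by the root subgroup $U_{v\cdot\alpha}$ and hence a product $\Lambda\times\mathbb{A}^1_{v\cdot\alpha}$ --- and then combine these reduced identifications with the (repaired) reducedness of $X'$.
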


Simply put, we can degenerate $KL_1$ as:

$$KL_1 \rightsquigarrow KL_2 \cup_{0\times KL_3} ( \mathbb{A}^1\times KL_3).$$
We want to emphasize that this limit is in fact reduced. This inductive version of the degeneration can be found in greater detail in \cite{Knut-patches} and \cite{Knut}.

\section{Degenerating to the toric case }\label{toric}

We will now consider the degeneration of the Kazhdan-Lusztig variety $X^w\cap X^{\circ}_v$ to a Stanley-Reisner scheme. To give a picture of what will occur later in this section, we provide a motivational example.

\begin{example}\label{degen-example}\normalfont

In the $GL_n(\mathbb{C})$ case, consider $X^{53241}\cap X_{12345}^{\circ}$. Then by Example \ref{KL-example} we have

$$I_{w,v} = \langle z_{11}z_{22} - z_{12}z_{21}, z_{11}z_{23} - z_{13}z_{21}, z_{12}z_{23} - z_{13}z_{22}\rangle$$

Consider the term order where $z_{11} > z_{21} > ... > z_{12} > ... > z_{55}$. Then

$$\init(I_{w,v}) = \langle z_{11}z_{22},  z_{11}z_{23}, z_{12}z_{23} \rangle$$

This is the Stanley-Reisner ideal of the subword complex $\Delta(Q,12345)$ (the vertex $z_{13}$ is a cone vertex and has not been drawn below).

\begin{figure}[H]
\centering
\includegraphics[width=0.5\linewidth]{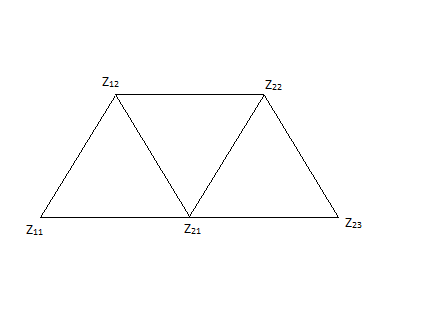}
\caption{}
\label{SR}
\end{figure}
Note: The fact that these ``Woo-Yong'' coordinates in \cite{Woo-Yong} are equivalent to Bott-Samelson coordinates in this case follows from \cite{Woo-Yong-Grobner}.\qed

\end{example}

\begin{theorem}\label{Knutson09}
 Given $\varphi_Q:BS^Q \rightarrow G/B$, consider $\varphi_Q^{-1}(X_w)\subset \mathbb{A}^{|Q|}$ and denote its ideal by $I_w$. Then $J_w: = \init(I_w)$ is a squarefree monomial ideal. Its prime components are coordinate ideals $C_F$ where $Q\setminus F$ is a reduced word for $w$.
\end{theorem}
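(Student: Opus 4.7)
The plan is to induct on $|Q|$, using the geometric vertex decomposition (GVD) of Section \ref{GVD} to peel off the last letter of $Q$ one step at a time. The base case $|Q|=0$ is trivial, as $\varphi_\emptyset^{-1}(X_1)$ is a point in a $0$-dimensional chart and the claim holds vacuously.

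For the inductive step, write $Q=(Q',s)$ with $s$ a simple reflection, and use the fibration $BS^Q \to BS^{Q'}$ to identify the chart as $\mathbb{A}^{|Q|} \cong \mathbb{A}^{|Q'|} \times \mathbb{A}^1_z$, where $z$ parametrizes the opposite big cell of the last $\mathbb{P}^1$. Choose a term order that makes $z$ largest; then the first Gr\"obner step realizing $\init$ coincides with the $\mathbb{G}_m$-limit defining the GVD on the $z$-factor, and we apply the GVD theorem. If $ws>w$, the limit is trivial and $\varphi_Q^{-1}(X_w) \cong \varphi_{Q'}^{-1}(X_w) \times \mathbb{A}^1_z$, so $I_w$ is independent of $z$; by induction the initial ideal on the $BS^{Q'}$-chart is squarefree monomial with the correct prime components, and $z$ becomes a cone vertex of $\Delta(Q,w)$, matching the fact that every reduced word for $w$ inside $Q$ lies in $Q'$. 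If $ws<w$, GVD produces a reduced two-component limit $(\Pi \times \{0\}) \cup (\Lambda \times \mathbb{A}^1_z)$ where $\Pi$ and $\Lambda$ arise from $\varphi_{Q'}^{-1}(X_{ws})$ and $\varphi_{Q'}^{-1}(X_w)$ respectively. Apply the inductive hypothesis to each; the ideal of the union is the intersection of the two individual ideals (with the appropriate use of $z$ for the $\Pi$ component), and the intersection of squarefree monomial ideals is again squarefree monomial. The resulting prime components split into those in which $s$ belongs to the associated reduced word (contributed by $\Delta(Q',ws)$, extended by appending $s$) and those in which it does not (contributed by $\Delta(Q',w)$); this is precisely the recursive description of facets of $\Delta(Q,w)$.

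The main obstacle is twofold: ensuring that a single global monomial order simultaneously refines each elimination step of the induction, and establishing that reducedness of the GVD limit translates into squarefreeness of the monomial initial ideal. For the first, one takes a lexicographic refinement with variables ordered by Bott-Samelson position, so that peeling off the last variable at each stage is a coherent specialization of one fixed global $\init$. For the second, the $\mathbb{G}_m$-degeneration underlying GVD is by construction a Gr\"obner degeneration, so the limit ideal equals the initial ideal; a reduced monomial scheme is then equivalent to a squarefree monomial ideal, which at each inductive step is preserved under the intersection above. The combinatorial identification of prime components with facets then follows from the recursion: a reduced word for $w$ inside $Q = (Q',s)$ either lies entirely in $Q'$, or is a reduced word for $ws$ in $Q'$ with $s$ appended, and the latter occurs exactly when $ws<w$.
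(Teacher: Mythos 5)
Your overall route (peeling the last letter of $Q$, lex order with that variable largest, and the geometric vertex decomposition of Section \ref{GVD}) is exactly the strategy of the source the paper cites for this theorem, and most of the scaffolding — the cone-vertex case when $ws>w$, and the combinatorial recursion matching facets of $\Delta(Q,w)$ with facets of $\Delta(Q',w)$ and $\Delta(Q',ws)$ — is sound. But there is a genuine gap at the crucial step of the two-component case. After the one-variable degeneration you have the reduced limit $X'=(\Pi\times\{0\})\cup(\Lambda\times\mathbb{A}^1_z)$, whose ideal is $I(\Pi\times\{0\})\cap I(\Lambda\times\mathbb{A}^1_z)$, and you then compute the remaining initial ideal by applying the inductive hypothesis to each component and intersecting. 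This tacitly assumes that taking $\init$ commutes with the intersection of the two ideals. In general only $\init(I\cap J)\subseteq\init(I)\cap\init(J)$ holds (e.g. $I=(x)$, $J=(x+y^2)$ gives $\init(I\cap J)=(x^2)\subsetneq(x)$), so your argument only shows that the flat limit \emph{contains} the Stanley--Reisner scheme of $\Delta(Q,w)$; it does not rule out extra components or embedded/non-reduced structure in the composite degeneration. Closing this containment is precisely the nontrivial content of the theorem, and it is what the cited proof supplies by an extra ingredient: the Kazhdan--Lusztig/patch ideals are compatibly split by the standard Frobenius splitting in these coordinates, so every initial ideal in the chain is automatically radical (hence squarefree), after which the components are pinned down; alternatively one can compare Hilbert series across the degeneration. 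Reducedness of the single GVD step applied to an irreducible patch, which is all the quoted theorem gives you, does not by itself propagate to the later degenerations of the resulting \emph{reducible} limit.

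Two smaller points. First, your case analysis conflates the two subcases of $ws<w$: when $w$ is not dominated appropriately (the analogue of $w\nleq vr_\alpha$ in the quoted patch theorem, i.e. no reduced word for $w$ lies in $Q'$), the limit is trivial and has a single component, so ``GVD produces a reduced two-component limit'' is not literally correct there; the conclusion survives because the missing piece is empty, but the case should be treated. Second, the GVD theorem you invoke is stated for Schubert patches $X_w|_v$ with $Q$ a reduced word for $v$; its identification of $\Pi$ and $\Lambda$ with the pullbacks over $Q'$, and the reducedness of $X'$, are only available verbatim in that setting, so either restrict the induction to reduced $Q$ (which is all the paper uses, and is preserved when passing to $Q'$) and say so, or supply the Bott--Samelson--chart analogue of that statement for arbitrary words.
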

\begin{proof}
See \cite[\S 7.3]{Knut}.
\end{proof}

This theorem is stated in terms of $X_w\cap X_{\circ}^v$ with $Q$ a reduced word for $v$. Multiplying by the long Weyl group element $w_0$ gives us our form of Kazhdan-Lusztig variety introduced in Section \ref{KL}. We therefore denote the simplicial complex associated to $I_{w,v}$ by $\Delta(Q,w)$ where $Q$ is reduced word for the permutation $v$. This is the \textbf{subword complex} introduced in Section \ref{subword}

\begin{theorem}\label{KnutsonMiller04}
The subword complex $\Delta(Q,w)$ is homeomorphic to a ball and is shellable.  Its boundary sphere is
$$\bigcup_{w'\gtrdot w} \Delta(Q,w').$$ We will denote its defining ideal by $\partial I_{w,v} := \cap_{w\gtrdot w'} I_{w',v}$.
\end{theorem}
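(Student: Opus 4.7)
The plan is to bolt together the two preceding propositions of Knutson-Miller, from which we already know that $\Delta(Q,w)$ is either a ball or a sphere and that a face $Q\setminus P$ lies in the boundary iff $\dem(P)\neq w$. Three things remain: rule out the sphere case, identify the boundary with $\bigcup_{w'\gtrdot w}\Delta(Q,w')$, and establish shellability. I would treat the first two together and handle shellability by induction on $|Q|$.

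For the ball/boundary part, since $Q$ is a reduced word for $v$ we have $\dem(Q)=v$, so the empty face $\emptyset=Q\setminus Q$ is a face of $\Delta(Q,w)$ with $\dem(Q)=v\neq w$ in the nondegenerate case $w<v$; by the boundary criterion this forces $\Delta(Q,w)$ into the ball case. For the boundary identification I argue both inclusions. If $P$ is a reduced subword of $Q$ representing some $w'\gtrdot w$, then by the subword property of Bruhat order $P$ contains a reduced subword for $w$, so $Q\setminus P\in\Delta(Q,w)$, and $\dem(P)=w'\neq w$ puts it in the boundary. Conversely, if $Q\setminus P$ is a boundary face then $\dem(P)=w''$ for some $w''>w$, and choosing any cover $w'\gtrdot w$ with $w'\leq w''$ gives $\Delta(Q,w'')\subseteq\Delta(Q,w')$ (again by the subword property), so $Q\setminus P$ lies in the desired union.

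For shellability I plan to induct on $|Q|$. Writing $\sigma=\sigma_m$ for the last letter of $Q$ and $Q'$ for $Q$ with $\sigma_m$ removed, a direct count from the definition of subword complex gives $\mathrm{link}(v_m)=\Delta(Q',w)$, together with
$$\mathrm{del}(v_m)=\begin{cases}\Delta(Q',w) & \text{if } w\sigma>w,\\ \Delta(Q',w\sigma) & \text{if } w\sigma<w\end{cases}$$
(in the second case the inclusion $\Delta(Q',w)\subseteq\Delta(Q',w\sigma)$ recovers the link as the intersection of the star and the deletion). In the first case $\Delta(Q,w)$ is a cone on $\Delta(Q',w)$ with apex $v_m$ and inherits shellability from induction; in the second case $\Delta(Q,w)$ is the star of $v_m$ glued to the deletion along the shared link, and one splices the two inductively supplied shellings.

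The main obstacle is establishing these link/deletion formulas, which reduce to tracking how reduced subwords of $Q$ for $w$ behave when the last letter is either removed or forced into the reduced expression; the exchange condition for Coxeter groups is exactly the tool required. Once these are in hand, the rest is a routine shellability gluing argument together with the Bruhat-order bookkeeping for the boundary.
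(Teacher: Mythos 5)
The paper does not actually prove this statement: its ``proof'' is the citation to \cite{KM}, and the two propositions you quote are themselves lifted from that source. What you have written is essentially a reconstruction of Knutson--Miller's own argument rather than a new route: your link/deletion identities for the last letter $\sigma$ of $Q$ (link $=\Delta(Q',w)$; deletion $=\Delta(Q',w)$ if $w\sigma>w$ and $\Delta(Q',w\sigma)$ if $w\sigma<w$) are exactly their vertex-decomposition step, your splicing of shellings is the standard argument that vertex-decomposable complexes are shellable, and the boundary criterion via Demazure products is their Theorem 3.7. The sketch is correct, with three points that need to be said explicitly if you write it out. First, in the reverse boundary inclusion you need both that $\dem(P)\geq w$ for every face $Q\setminus P$ (Lemma 3.4 of \cite{KM}, which the paper itself invokes later) and that $P$ contains a reduced word for $\dem(P)$; with these, choosing a cover $w\lessdot w'\leq\dem(P)$ works as you say, and your forward inclusion, though phrased only for facets of $\Delta(Q,w')$, suffices because the boundary is a subcomplex and every face of $\Delta(Q,w')$ lies under one of its facets. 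Second, the unqualified word ``ball'' requires $w\neq\dem(Q)=v$: when $w=v$ the complex is $\{\emptyset\}$, a sphere, so your restriction to $w<v$ is precisely the paper's implicit hypothesis, and your observation that the empty face $Q\setminus Q$ is a boundary face (since $\dem(Q)=v\neq w$) correctly excludes the sphere case. Third, the real content is the verification of the link/deletion identities, which you defer; the check is as you indicate -- the deleted letter, being last in $Q$, can only occur as the final letter of an ordered reduced subword, and the dichotomy $w\sigma>w$ versus $w\sigma<w$ sorts the two cases (one also needs the easy nonvoidness observations, e.g.\ that in the cone case some reduced word for $w$ avoids $\sigma$). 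So your proposal buys a self-contained proof where the paper simply defers to \cite{KM}, but it is the same proof as the cited source's.
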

\begin{proof}
See \cite{KM}.
\end{proof}

\begin{corollary}\label{Knutson-corollary}
Properties of $I_{w,v}$:
\begin{itemize}
\item Each $I_{w,v}$ is Cohen-Macaulay.
\item Each $I_{w,v}$ is normal and $\partial I_{w,v}$ defines an anitcanonical divisor.
\item $I_{w,v} = \cap_{w'\gtrdot w, \text{biGrassmannian}} I_{w',v}$. The concatenation of Grobner bases for the maximal biGrassmannians $\leq w$ provides a Grobner basis for $I_{w,v}$.
\item Consider the poset morphism $m: 2^Q\rightarrow W$ given by $F\rightarrow Dem(Q\setminus F)$. Then $\Delta(Q,w) = m^{-1}([w,w_0])$. That is, $m$ defines a ``Bruhat decomposition for $\Delta(Q,w)$''.
\end{itemize}
\end{corollary}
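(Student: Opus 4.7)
I would verify the four bullets in sequence. For the first two, the flat Gr\"obner degeneration $I_{w,v} \rightsquigarrow J_{w,v}$ from Theorem \ref{Knutson09} propagates the relevant properties. Shellability of $\Delta(Q,w)$ (Theorem \ref{KnutsonMiller04}) implies via Reisner's criterion that $k[Q]/J_{w,v}$ is Cohen--Macaulay, and since Cohen--Macaulayness is preserved along flat deformations (if the special fibre is CM then the generic fibre is too), $k[Q]/I_{w,v}$ is Cohen--Macaulay; one could alternatively bypass the degeneration by invoking Lemma \ref{schubert-normal} and the Kazhdan--Lusztig Lemma. Normality is then immediate from Lemma \ref{schubert-normal}. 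For the anticanonical claim, I would invoke the Frobenius splitting of $X^w\cap X_v^\circ$ coming from the corollary after Proposition \ref{schubert-split}, which compatibly splits $\partial X^w\cap X_v^\circ$. Since any Frobenius splitting of a normal variety is defined by a section of $\omega^{-1}$ whose zero divisor equals the (reduced) sum of the compatibly split prime divisors, the class of $\partial X^w\cap X_v^\circ$ lies in $|-K_{X^w\cap X_v^\circ}|$.

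For the third bullet, I would invoke the Lascoux--Schützenberger characterization: every permutation $w$ is determined by its maximal biGrassmannians in the appropriate interval, and the corresponding single-rank determinantal generators assemble into $I_{w,v}$. The heart of the claim, and the point I expect to be the main obstacle, is the Gr\"obner basis statement: one must show that the concatenation of the biGrassmannian Gr\"obner bases is again a Gr\"obner basis of $I_{w,v}$. I would argue this by matching initial ideals, checking that the initial of the concatenation equals $J_{w,v}$, the Stanley--Reisner ideal of $\Delta(Q,w)$. Theorem \ref{KnutsonMiller04} already packages $J_{w,v}$ and $\partial I_{w,v}$ in a form compatible with biGrassmannian decompositions, so the task reduces to a careful term-order compatibility check across subsets of variables arising from different biGrassmannians, and to ruling out spurious $S$-polynomial cancellations. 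The intersection identity itself can then be transferred from the special fibre back to $I_{w,v}$ along the flat family, using that intersections of monomial ideals at the limit lift to intersections of the Gr\"obner-basis ideals.

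Finally, the fourth bullet is essentially the unwinding of a definition. A face $Q\setminus P$ lies in $\Delta(Q,w)$ iff $P$ contains a reduced expression for $w$, equivalently $\dem(P) \geq w$. Setting $F = Q\setminus P$, this says $m(F) = \dem(Q\setminus F) \geq w$, and since $\dem(Q\setminus F) \leq w_0$ is automatic, we obtain $F \in \Delta(Q,w) \Leftrightarrow m(F) \in [w,w_0]$. The map $m$ is a poset morphism with respect to reverse inclusion on $2^Q$, since enlarging $F$ shrinks $Q\setminus F$ and can only weakly decrease its Demazure product in Bruhat order; this monotonicity is what legitimately interprets the fibre decomposition of $m$ as a ``Bruhat decomposition'' of the subword complex.
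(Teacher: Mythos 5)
Your bullets (1) and (4) are fine, and in fact the paper itself offers no argument at all here (its ``proof'' is a pointer to \cite{Knut} and \cite{KM}), so attempting real justifications is reasonable; but two of your steps do not hold up as written. First, the anticanonical claim in bullet (2): the principle you invoke --- that a Frobenius splitting of a normal variety is given by a section of $\omega^{-1}$ whose zero divisor \emph{equals} the reduced sum of the compatibly split prime divisors --- is false. Splittings correspond to sections of $\omega^{1-p}$, not $\omega^{-1}$, and a compatibly split divisor is merely \emph{dominated} by the splitting divisor: from compatibility one only gets $(p-1)D\le \mathrm{div}(\sigma)$, i.e.\ $D$ is sub-anticanonical. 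A single coordinate line in $\mathbb{P}^2$ is compatibly split by the standard toric splitting but is certainly not anticanonical. So ``$\partial X^w\cap X_v^{\circ}$ is compatibly split'' does not yield $[\partial X^w\cap X_v^{\circ}]=[-K]$; what is actually needed is the specific input that the canonical splitting of $G/B$ is the $(p-1)$st power of an anticanonical section whose divisor is $\sum_i X^{w_0s_i}+\sum_i X_{s_i}$, together with the Ramanathan--Mehta-type formula $\omega_{X^w}^{-1}\cong\mathcal{O}(\partial X^w)\otimes\mathcal{L}(\rho)|_{X^w}$, whose $\mathcal{L}(\rho)$-factor trivializes on the affine patch $X_v^{\circ}$. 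That computation is the content of the statement, and your argument skips it.

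Second, bullet (3) is left as a plan rather than a proof, and the plan has a soft spot. The Gr\"obner statement (concatenating the biGrassmannian Gr\"obner bases gives a Gr\"obner basis for $I_{w,v}$) is the heart of Knutson's theorem; saying you would ``match initial ideals'' and ``rule out spurious $S$-polynomial cancellations'' names the obstacle without removing it --- the known proofs go through Bott--Samelson coordinates, positivity/degeneration, or splitting-and-point-counting arguments, not a direct Buchberger check. Moreover, your proposed transfer of the intersection identity from the monomial special fibre back to $I_{w,v}$ is itself nontrivial: in general one only has $\init(I\cap J)\subseteq\init(I)\cap\init(J)$, possibly strictly, so ``intersections at the limit lift'' needs a Hilbert-function/flatness comparison --- which is essentially the statement you are trying to establish. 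As it stands, these two points are genuine gaps; the remaining bullets (Cohen--Macaulayness via the shellable degeneration or directly from Lemma \ref{schubert-normal} plus the Kazhdan--Lusztig Lemma, normality, and the unwinding $F\in\Delta(Q,w)\Leftrightarrow \mathrm{Dem}(Q\setminus F)\ge w$ with monotonicity of the Demazure product) are correct.
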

\begin{proof}
A combination of results from \cite{Knut} and \cite{KM}.
\end{proof}

\subsection{Combinatorics}\label{combinatorics}

We wish to understand what happens when we blow-up the toric scheme (coming from the degeneration) along its boundary.

First recall that a normal (irreducible) variety containing a torus $T\cong (\mathbb{G}_m)^n$ as an open subset, where the action of $T$ on itself extends to an action of $X$ is called a toric variety. The \textbf{boundary} of $X$ is then defined as $\partial X = X\setminus T$. Denote the coordinate ring of $X$ by $R[X] = k[x_1,...,x_n]/I_X$. To each affine toric variety $X$ we can associate a convex rational polyhedral cone $\sigma_X$ contained in the character lattice $M_{\mathbb{R}}$. This cone defines a saturated affine semigroup $S_{\sigma_X}$ such that $X = \spec(k[S_{\sigma_X}])$ (we are using the notation in \cite[\S 1]{CLS}). The irreducible components $D_1,...,D_n$ of $\partial X$  are defined as the vanishing of a primitive character in $M_{\mathbb{R}}$ and correspond to  the facets of $\sigma_X$. Here $\partial X$ is the anticanonical divisor (up to linear equivalence). See \cite{CLS} for more information about the correspondence between $X$ and a cone or polytope in the character lattice. 

A blow-up of an affine $X$ along a (reduced) torus invariant subvariety of $\partial X$ can be viewed in the character lattice as ``planing off'' the corresponding faces of $\sigma_X$. That is, it can viewed by taking the convex hull of the lattice points that remain after removing components of the boundary. For now we will consider the combinatorics of such an operation and save the geometric questions for the next section.

We will be working with reducible toric schemes which can be associated to convex polytopal complexes. A \textbf{lattice polytopal complex} $\Delta$ is a union of lattice polytopes such that the intersection of any two is also a lattice polytope contained in $\Delta$. The boundary $\partial\Delta$ of $\Delta$ is the union of polytopes which form the boundary of the convex region $\Delta$. Affine toric schemes can then be associated to the cone on $\Delta$, denoted by $\cone(\Delta)$ or $C(\Delta)$.

Just as in the irreducible case, the \textbf{blow-up of a lattice polytopal complex} $\Delta$ along a subcomplex $\Delta'\subset \partial \Delta$ is the polytopal complex $\widetilde{\Delta}$ resulting from taking the convex hull of the lattice points in $\Delta\setminus \Delta'$. We will show that in the character lattice $M_{\mathbb{R}}$, this is the correct picture to visualize the Rees algebra $R[It]=\bigoplus_{k\geq 0} I^kt^k$ for the blow-up of $X$ along the reduced subscheme $(\partial X)'$ (corresponding to $\Delta$ and $\Delta'$ respectively).

\begin{lemma}\label{BU}
Let $X$ be an affine toric scheme associated to a polytopal complex $\Delta\subset M_{\mathbb{R}}$ (so that $X=\spec(k[S_{C(\Delta)}]))$. Then the blow-up of $X$ along a torus invariant subscheme $(\partial X)'\subset\partial X$ is again a toric scheme that can be associated to the blow-up of $\Delta$ along $\Delta'$.
\end{lemma}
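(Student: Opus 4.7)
The plan is to identify the Rees algebra $R[It] = \bigoplus_{k \geq 0} I^k t^k$ with the semigroup algebra of the cone $C(\widetilde{\Delta})$ by matching lattice points. Since $X = \spec(k[S_{C(\Delta)}])$ is toric and $(\partial X)' \subset \partial X$ is $T$-invariant, the defining ideal $I$ is automatically a radical monomial ideal, so the blow-up algebra is monomial and the entire comparison reduces to combinatorics on lattice points in $M \oplus \mathbb{Z}$.

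First I would identify $I$ explicitly via the $T$-orbit/face correspondence: a character $\chi^m$ lies in $I$ precisely when $m$ is a lattice point of $C(\Delta)$ whose minimal supporting face does not lie in $C(\Delta')$. In particular, at height one in $C(\Delta)$, the generators of $I$ are the characters $\chi^m$ for lattice points $m \in \Delta \setminus \Delta'$. Next I would compute $I^k$: since $I$ is monomial, $I^k$ is spanned by products of $k$ such generators times an element of $S_{C(\Delta)}$, so the monomials at cone-height $d$ in $I^k$ are the characters $\chi^{m'}$ with $m'$ a lattice sum of $k$ elements of $\Delta \setminus \Delta'$ plus an element of $S_{C(\Delta)}$ at height $d - k$. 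Writing the Rees algebra in bi-graded form with the $t$-coordinate as a new dimension, the generating semigroup of $R[It]$ sits inside $M_{\mathbb{R}} \oplus \mathbb{R}$ and consists of $S_{C(\Delta)} \times \{0\}$ together with the height-one lifts $\{(m, 1) : m \in \Delta \setminus \Delta'\}$. By the very definition of $\widetilde{\Delta}$ as the convex hull of the lattice points in $\Delta \setminus \Delta'$, the saturation of this semigroup is precisely $S_{C(\widetilde{\Delta})}$, with $\widetilde{\Delta}$ sitting at $t = 1$. Finally, since $\proj$ of the Rees algebra agrees with $\proj$ of its integral closure, the blow-up of $X$ along $(\partial X)'$ is identified with the toric scheme associated to $\widetilde{\Delta}$.

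The hard part will be the semigroup comparison: the Rees algebra's generating lattice points (finite Minkowski sums of elements of $\Delta \setminus \Delta'$) need not exhaust the integral cone $C(\widetilde{\Delta}) \cap (M \oplus \mathbb{Z})$, so one must either pass to the normalization or appeal to $\proj$-invariance under integral closure to close the gap. A secondary subtlety arises when $\Delta$ has several maximal polytopes: the argument should be carried out componentwise, with compatibility across shared faces automatic because everything is encoded by lattice-point data. Once these steps are in place, matching the toric-scheme descriptions on both sides yields the lemma.
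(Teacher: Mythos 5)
Your setup is essentially the paper's: you read $I$ off as the span of characters supported on $C(\Delta)\setminus C(\Delta')$, compute $I^k$ as Minkowski sums, view the Rees algebra as a semigroup in $M_{\mathbb{R}}\oplus\mathbb{R}$, and you correctly isolate the delicate point, namely that these Minkowski sums need not exhaust the lattice points of $C(\widetilde{\Delta})$. The gap is in how you propose to close that discrepancy. The claim that ``$\proj$ of the Rees algebra agrees with $\proj$ of its integral closure'' is false in general: $\proj$ of the degreewise integral closure $\bigoplus_k \overline{I^k}\,t^k$ is the \emph{normalized} blow-up, i.e.\ the normalization of $\proj R[It]$, and this is an isomorphism onto the blow-up only when the blow-up is already normal -- which is exactly what is not known here. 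Your fallback of ``passing to the normalization'' has the same defect, and in the present setting it is even more damaging: the schemes in question are reducible (Stanley--Reisner), so normalization disjoins the irreducible components and cannot produce the toric scheme glued along the interior faces of $\widetilde{\Delta}$. The distinction matters downstream: the lemma is used to conclude that a second blow-up is literally an isomorphism, hence that the boundary of the first total transform is Cartier (Proposition \ref{SR-cartier} via Lemma \ref{BU-iso}); replacing the blow-up by its normalization would not support that conclusion.

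The paper's own proof never invokes integral closure: it works with the actual graded pieces $I^k$ and argues that the weights of $I^k$, rescaled by $\tfrac{1}{k}$ (a Veronese-type reduction, together with choosing the embedding so that the lattice is ``dense enough''), trace out the same cone $C(\widetilde{\Delta})$ on a denser lattice, so that $\widetilde{X}$ can be computed directly from the blown-up complex. To repair your argument you would need a statement of that kind -- either a rescaling/Veronese argument showing the degree-by-degree discrepancy is invisible to $\proj$, or an actual proof of the needed saturation of the powers $I^k$ for the specific monomial ideals at hand -- rather than an appeal to invariance of $\proj$ under integral closure.
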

\begin{proof}
Let $I$ be the defining ideal for $(\partial X)'$. The blow-up algebra $R[It]$ is generated by $I$ in degree one over $R[X]$. In the character lattice, $R[X]$ can be viewed as $C(\Delta)$ and $I$ as the subset of lattice points not in $C(\Delta ')$ (we choose an appropriate embedding so that the lattice is dense enough). We observe that the weights of $I^k$ scaled by $\frac{1}{k}$ form the same cone $C(\widetilde{\Delta})$ in degree one, except using a denser lattice. Therefore it suffices to compute $\widetilde{X}$ using the weights in the blow-up of $\Delta$ along $\Delta'$. In fact $\widetilde{X} = \proj (k[S_{\widetilde{C(\Delta)}}])$.
\end{proof}
After the first blow-up, our scheme is no longer affine. To continue viewing the blow-up as in Lemma \ref{BU}, we just need to cover the total transform with affine charts and patch together the results.

\begin{example}\label{BU_origin}\normalfont
The blow-up of $\mathbb{C}^2$ at the origin can viewed in the character lattice as the first region in the figure below. The origin of $\mathbb{C}^2$ corresponds to the origin of the first quadrant. After removing it and taking the convex hull of the lattice points that remain, we are left with the first picture below corresponding to $I =\langle x,y \rangle$. We can obtain the diagram for $I^2$ by planing off the diagonal an extra step inward (the second diagram below). This new diagram scaled by $\frac{1}{2}$ is the original one with a denser lattice.
\begin{figure}[H]
\centering
\label{BU_origin_figure}
\includegraphics[width=0.5\linewidth]{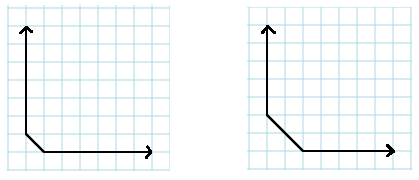}
\caption{}
\qed
\end{figure}

\end{example}
Given the cone on $\Delta$, let $\pi^1(C(\Delta)) =\conv(C(\Delta)\setminus \partial C(\Delta))$. We can perform the same operation on $\pi^1(C(\Delta))$ by taking the convex hull after removing the boundary, and we will denote this by $\pi^2(C(\Delta))$. Continuing this process, we have the following sequence:

$$... \rightarrow \pi^3(C(\Delta))\rightarrow \pi^2(C(\Delta))\rightarrow \pi^1(C(\Delta))\rightarrow \pi^0(C(\Delta))=C(\Delta)$$

We will say that $C(\Delta)$ is \textbf{stable} if there exists a $k>0$ such that the toric scheme associated to $\pi^k(C(\Delta))$ is isomorphic to the toric scheme associated to $\pi^{k-1}(C(\Delta))$. In this case we will simply say that $\pi^k(C(\Delta))$ and $\pi^{k-1}(C(\Delta))$ are \textbf{equivalent}.

The most basic simplicial complex, just a single standard simplex, stabilizes very quickly, even if we only remove a portion of the boundary. This single example is the only case we actually need to extend the result to Stanley-Reisner complexes.

\begin{lemma}\label{BU_simplex}
Let $\Delta$ be the standard simplex in $\mathbb{A}^n$. Let $\Delta'$ be some proper simplicial subcomplex of $\Delta$. Let $\widetilde{\Delta} = \conv(C(\Delta)\setminus C(\Delta'))$ and let $\widetilde{\Delta'}\subset \widetilde{\Delta} $ be the exceptional facets (those facets in $\widetilde{\Delta}$ not originally in $\Delta$). Then the blow-up of $\widetilde{\Delta}$ along $\widetilde{\Delta'}$ is equivalent to $\widetilde{\Delta}$.
\end{lemma}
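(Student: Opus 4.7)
The plan is to make the blow-up operation on $\widetilde{\Delta}$ explicit in the character lattice, show that it simply doubles every exceptional facet inequality, and then invoke the Veronese equivalence of $\proj$ to identify the resulting toric scheme with the original one.

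First, I would give an explicit polyhedral description of $\widetilde{\Delta}$. Writing $\Delta$ as the simplex with vertices $e_1,\dots,e_n$, we have $C(\Delta)=\mathbb{R}_{\geq 0}^n$, and $C(\Delta')$ is the union of coordinate faces indexed by simplices of $\Delta'$. A lattice point of $C(\Delta)\setminus C(\Delta')$ is exactly one whose support fails to be a face of $\Delta'$, equivalently contains some minimal non-face $\sigma$ of $\Delta'$. Every such lattice point decomposes as $v_\sigma+y$ where $v_\sigma=\sum_{i\in\sigma}e_i$ is the 0-1 indicator of $\sigma$ and $y\in\mathbb{Z}_{\geq 0}^n$, so
\[
\widetilde{\Delta}=\conv\{v_\sigma : \sigma\text{ a minimal non-face of }\Delta'\}+\mathbb{R}_{\geq 0}^n.
\]
Because the generators are 0-1 vectors and the recession cone is the positive orthant, each facet of $\widetilde{\Delta}$ is either an inherited facet $\{x_j=0\}$ or an exceptional facet of the form $\{\sum_{i\in T}x_i=1\}$ indexed by a minimal transversal $T$ of the collection of minimal non-faces.

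Given this description, the effect of blowing up $\widetilde{\Delta}$ along $\widetilde{\Delta'}$ is transparent. A lattice point of $\widetilde{\Delta}$ fails to lie on the exceptional facet $\{\sum_{i\in T}x_i=1\}$ precisely when $\sum_{i\in T}x_i\geq 2$, so removing all lattice points on any exceptional facet and taking the convex hull produces
\[
\{x\in\mathbb{R}_{\geq 0}^n : \textstyle\sum_{i\in T}x_i\geq 2 \text{ for each exceptional } T\}=2\widetilde{\Delta},
\]
a straight rescaling of $\widetilde{\Delta}$ in which the old facets $\{x_j=0\}$, being cones through the origin, are unaffected while only the exceptional right-hand sides are doubled.

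Equivalence as toric schemes now follows from the Veronese property of $\proj$: the graded algebra associated to $\widetilde{\Delta}$ has degree-$d$ piece spanned by the lattice points of $d\widetilde{\Delta}$, and the corresponding algebra for $2\widetilde{\Delta}$ is its $2$-Veronese subalgebra, so the two projective schemes coincide. The main obstacle I anticipate is the facet description in the second step, since the convex hull of 0-1 vectors together with the positive orthant can in principle acquire additional facets with right-hand side greater than one (for instance odd-hole-type inequalities); one must exploit the fact that $\Delta$ is the full standard simplex, together with the way $\Delta'$ sits inside its boundary, either to rule these out or to verify directly that the scaling argument still produces a polyhedron equivalent to $\widetilde{\Delta}$.
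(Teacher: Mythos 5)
Your opening reduction is fine: the lattice points of $C(\Delta)\setminus C(\Delta')$ are exactly those whose support contains a minimal non-face of $\Delta'$, so $\widetilde{\Delta}=\conv\{v_\sigma\}+\mathbb{R}_{\geq 0}^n$ as you say. But the next two steps fail, precisely at the point you flag and do not resolve. The facets of such a dominant are \emph{not} all of the form $\sum_{i\in T}x_i\geq 1$ for minimal transversals $T$: take $n=3$ and $\Delta'$ the $0$-skeleton of the triangle, so the minimal non-faces are the three edges and $\widetilde{\Delta}=\conv\{(1,1,0),(1,0,1),(0,1,1)\}+\mathbb{R}_{\geq 0}^3$. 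Besides the three facets $x_i+x_j\geq 1$, this polyhedron has the exceptional facet $x_1+x_2+x_3\geq 2$, which your transversal list misses (this is exactly the odd-hole obstruction from the set-covering LP that you mention). Consequently the rescaling claim collapses: the second blow-up keeps the lattice points with $x_i+x_j\geq 2$ for all pairs and $x_1+x_2+x_3\geq 3$, and the convex hull of these contains $(1,1,1)$, whereas $2\widetilde{\Delta}$ requires $x_1+x_2+x_3\geq 4$. So the result of blowing up $\widetilde{\Delta}$ along $\widetilde{\Delta'}$ is not a dilate of $\widetilde{\Delta}$ in general, and the Veronese identification of the two $\proj$'s does not apply as stated. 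Since your final sentence defers exactly this verification, the proof is incomplete at its crux; moreover, in the example above the global convex hull does not even retain a facet with normal $(1,1,1)$, so the desired equivalence cannot be read off by comparing the two polyhedra as rescalings of one another --- one needs a genuinely different argument (e.g., working affine chart by affine chart at the vertices of $\widetilde{\Delta}$ and showing the exceptional boundary is already Cartier there), not the global scaling you propose.

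For comparison, the paper's own proof only treats explicitly the case where $\Delta'$ is a single face, where everything you wrote is correct: there $\widetilde{\Delta}=C(\Delta)\cap\{x_1+\cdots+x_k\geq 1\}$, the next blow-up is $C(\Delta)\cap\{x_1+\cdots+x_k\geq 2\}$, and both are the blow-up of $\mathbb{A}^n$ along a coordinate subspace in two Veronese-twisted embeddings; it then asserts that the general subcomplex case follows easily. Your attempt tackles the general case head-on, which is exactly where the polyhedral combinatorics becomes delicate, and in that generality your facet description and the $2\widetilde{\Delta}$ claim are false, so the gap is genuine rather than cosmetic.
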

\begin{proof}

%To ensure sufficiently many lattice points in the interior of $\Delta$ we choose a higher twist of the line bundle. In this case, there exists some $a>>1$ such that $\Delta$ consists of the points $x_1+...+x_n=a$ with $x_i\geq 0$ where the lattice points consist of integer solutions.

First consider the case that $\Delta'$ is just one face of $\Delta$. We may assume by a change of coordinates that it is defined by
$$\{x_1=...=x_k=0\}\cap \Delta \text{ for } 1<k< n$$

Taking the convex hull of $C(\Delta)\setminus C(\Delta')$ is equivalent to intersecting $C(\Delta)$ with the half space $H_1^+ = \{x_1+...+x_k\geq 1\}$ (since this has removed the portion where $\{x_1=...=x_k=0\}$ leaving only lattice points where at least one $x_i>0$). The new facet of the boundary $\widetilde{\Delta'}$ consists of the polytope $C(\Delta) \cap \{x_1+...+x_k= 1\}$.

Blowing-up once more would result in a region given by intersecting $C(\Delta)$ with half space $H_2^+ = \{x_1+...+x_k\geq 2\}$ and boundary defined by $x_1+...+x_k=2$.

It is clear that both correspond to isomorphic copies (using a Veronese map giving different embeddings) of the blow-up of $\mathbb{A}^n$ along $\{x_1=...=x_k=0\}$, hence we have stability.

The case for general $\Delta'$ easily follows.
\end{proof}

The simplicial complexes we will be concerned with are those that come from Stanley-Reisner ideals. A \textbf{Stanley-Reisner complex} is a simplicial complex whose face ideal is generated by square-free monomials. The advantage to working with such complexes is that their simplices are stable.

\begin{lemma}\label{SR_standard}
Let $I \subset k[x_1,...,x_n]$ be an ideal generated by square-free monomials. Then the affine semigroup $S_{C(\Delta_I)}$ in the character lattice $M_{\mathbb{R}}$ associated to the reducible affine toric scheme $$X=\spec( k[x_1,...,x_n]/I)$$ is the cone on the Stanley-Reisner complex $\Delta_I$. Furthermore, each simplicial cone in $C(\Delta_I)$ is isomorphic to the cone on a standard simplex.
\end{lemma}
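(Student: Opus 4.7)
The plan is to first identify the coordinate-ring-side description, then translate it to the character lattice. The key observation is that for a squarefree monomial ideal, the primary decomposition takes the explicit form
$$I \;=\; \bigcap_{\sigma \text{ facet of } \Delta_I} P_{\sigma}, \qquad P_{\sigma} = \langle x_i : i \notin \sigma\rangle,$$
which is the classical Stanley--Reisner correspondence. Hence
$$X \;=\; \spec(k[x_1,\dots,x_n]/I) \;=\; \bigcup_{\sigma \text{ facet}} \spec\bigl(k[x_i : i \in \sigma]\bigr),$$
so $X$ is a scheme-theoretic union of coordinate subspaces, one for each facet of $\Delta_I$, glued along the coordinate subspaces indexed by intersections of facets (which are themselves faces of $\Delta_I$).

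Next I would unpack each irreducible component. The component indexed by a facet $\sigma$ is just $\mathbb{A}^{|\sigma|}$, which is the affine toric variety whose associated cone in $M_{\mathbb{R}}$ is the positive orthant $\mathbb{R}_{\geq 0}\langle e_i : i\in\sigma\rangle$. This positive orthant is precisely the cone over the standard simplex $\conv(e_i : i \in \sigma)$, with saturated semigroup $\bN\langle e_i : i \in \sigma\rangle$. This settles the ``furthermore'' clause of the lemma: each maximal simplicial cone appearing in the fan picture of $X$ is, tautologically, the cone on a standard simplex of the appropriate dimension.

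For the first assertion, I would argue the gluing is consistent with the definition of $C(\Delta_I)$. The cones $C(\sigma)$ for facets $\sigma$ intersect pairwise in $C(\sigma\cap\sigma')$, where $\sigma\cap\sigma'$ is a (possibly empty) face of $\Delta_I$; geometrically these are the coordinate subspaces along which the components of $X$ are glued. Taking the union of the cones $C(\sigma)$ over all facets yields exactly $C(\Delta_I)$, the simplicial cone complex one gets by coning off $\Delta_I$ from the origin. On the semigroup side, a monomial $x_1^{a_1}\cdots x_n^{a_n}$ survives in $k[x_1,\dots,x_n]/I$ iff its support $\{i : a_i>0\}$ is a face of $\Delta_I$, iff the lattice point $(a_1,\dots,a_n)\in\bN^n$ lies in some $C(\sigma)$, iff it lies in $C(\Delta_I)$. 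Thus the additive monoid of standard monomials of $k[x_1,\dots,x_n]/I$ is identified with the lattice points of $C(\Delta_I)$, which is the reducible analogue of the semigroup $S_{C(\Delta_I)}$ advertised in the statement.

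The main obstacle is less computational than notational: one must be careful that ``affine semigroup'' is being used in the reducible sense (a union of saturated sub-semigroups glued along common sub-semigroups), consistent with the polytopal-complex framework set up just before the lemma. Once that convention is spelled out, the argument above is essentially bookkeeping that respects the gluing. No deeper toric machinery is needed beyond the standard dictionary between positive orthants and affine spaces, applied facet-by-facet.
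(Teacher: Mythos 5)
Your proposal is correct and follows essentially the same route as the paper's proof: decompose $X$ into coordinate subspaces indexed by facets of $\Delta_I$, identify each component's cone with an orthant (the cone on a standard simplex), and observe that the gluing happens along faces not in $I$. Your version just spells out the standard-monomial/semigroup bookkeeping that the paper leaves as an easy exercise.
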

\begin{proof}
Since $I$ is a square free monomial ideal, each component of $I$ corresponds to a coordinate subspace of $\mathbb{A}^n$. It is an easy exercise to check that the cones in $M_{\mathbb{R}}$ corresponding to each component are isomorphic to some orthant $\mathbb{A}^k$, which is the cone on some standard simplex in the appropriate coordinates. The gluing of these simplices must occur along faces which are not in the face ideal $I$.

\end{proof}

\begin{example}\label{SR-example}\normalfont
Consider $\mathbb{C}[w,x,y,z]/\langle wz\rangle$. There are two components in this affine scheme defined by $w=0$ and $z=0$. The $w=0$ component is given by $\spec(\mathbb{C}[x,y,z])$ which corresponds to the orthant $\mathbb{C}_{\geq 0}^3$ in the character lattice. A similar statement holds for the $z=0$ component. Since $wz=0$, adding the generator of the $w$-axis and the generator for the $z$-axis results in 0, a relation demonstrated by the gluing of one facet from each simplex (corresponding to the gluing along the $xy$-plane).

\begin{figure}[H]
\centering
\label{BU_origin_figure}
\includegraphics[width=0.37\linewidth]{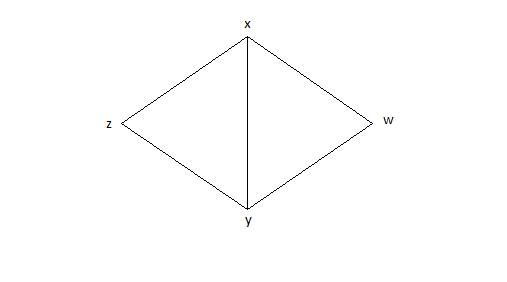}
\caption{}
\qed
\end{figure}

\end{example}

\begin{theorem}\label{stable}
Let $\Delta_I$ be as in Lemma \ref{SR_standard}. Then the blow-up of $\Delta_I$ along $\partial\Delta_I$ is stable. Even more, $\pi^2(C(\Delta_I))=\pi^1(C(\Delta_I))$.
\end{theorem}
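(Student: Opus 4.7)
The plan is to reduce the global statement on $\Delta_I$ to a local one on each maximal simplicial cone and then invoke Lemma \ref{BU_simplex}. By Lemma \ref{SR_standard}, $C(\Delta_I)$ decomposes as a union of standard simplicial cones $C(\sigma_i)$ glued along common coordinate faces. A face $\tau$ of a maximal $\sigma_i$ lies in $\partial\Delta_I$ precisely when $\tau$ is not shared with any other maximal simplex of $\Delta_I$; hence on each $\sigma_i$ the operation $\conv(C(\Delta_I) \setminus C(\partial\Delta_I))$ is exactly the local blow-up considered in Lemma \ref{BU_simplex}, with $\Delta'_i := \partial\Delta_I \cap \sigma_i$ playing the role of the removed subcomplex.

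First I would verify that the local blow-ups patch. The gluing faces between two maximal simplices are interior to $\Delta_I$ and avoid $\partial\Delta_I$, so their lattice points survive in $\pi^1(C(\Delta_I))$ unchanged. Consequently $\pi^1(C(\Delta_I))$ is obtained by gluing the local pieces $\widetilde{\sigma_i}$ along those same interior faces, and the boundary $\partial \pi^1(C(\Delta_I))$ is the union of the new exceptional facets from each $\sigma_i$ together with the (possibly truncated) portions of $\partial\Delta_I$ that were not entirely excised.

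Next I would iterate. Reapplying Lemma \ref{BU_simplex} to each $\widetilde{\sigma_i}$ shows that $\pi^2$ merely replaces each defining inequality $x_{i_1} + \cdots + x_{i_k} \geq 1$ of an exceptional facet by $x_{i_1} + \cdots + x_{i_k} \geq 2$. As observed inside the proof of Lemma \ref{BU_simplex}, this is a Veronese re-embedding of the same affine toric scheme in a denser lattice, so $\pi^2(\sigma_i) \cong \pi^1(\sigma_i)$ as toric schemes. Because the interior gluing faces are again undisturbed by the second blow-up (they remain interior, in particular not in $\partial\pi^1(C(\Delta_I))$), these local isomorphisms assemble into a global equivalence $\pi^2(C(\Delta_I)) \cong \pi^1(C(\Delta_I))$, which gives both stability and the sharper bound $k=1$.

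The main obstacle I expect is the correct description of $\partial \pi^1(C(\Delta_I))$ in this reducible setting. One must carefully separate the genuinely new exceptional facets (those introduced by the blow-up and shared by no other maximal simplex of $\widetilde{\Delta_I}$) from the facets inherited from $\partial\Delta_I$ that were only partially planed off. Since $\pi^2$ is defined by planing off $\partial \pi^1$, an incorrect identification here would derail the induction; this is exactly where the hypothesis that the ambient complex is Stanley--Reisner is essential, because only then do all maximal cones meet along coordinate faces and the bookkeeping becomes purely combinatorial. Once this description is in place, the simplex-by-simplex argument from Lemma \ref{BU_simplex} finishes the proof.
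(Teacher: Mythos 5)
Your proposal follows essentially the same route as the paper: reduce to each maximal cone, which by Lemma \ref{SR_standard} is the cone on a standard simplex with $\sigma_i\cap\partial\Delta_I$ a proper subcomplex, and conclude by Lemma \ref{BU_simplex}; the paper's proof is exactly this reduction, stated without the patching details you supply. One small correction to those details: an interior gluing face need not avoid $\partial\Delta_I$ entirely (it can contain lower-dimensional boundary faces, so some of its lattice points are removed), but since the excised set is global the truncation agrees from both sides and the local pieces still glue along it, so your argument stands.
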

\begin{proof}
It suffices to show that each simplex $\sigma\in \Delta_I$ blown up along $\sigma \cap \partial \Delta_I$ is stable. By Lemma \ref{SR_standard}, each simplex $\sigma$ is isomorphic to the standard simplex. Therefore the result holds by Lemma \ref{BU_simplex}.
\end{proof}

\begin{example}\normalfont
We observe that the subword complex from Example \ref{degen-example} is stable. Removing the boundary and taking the convex hull would produce the blue polytopal complex. It is clear that doing this process once more produces an isomorphism.

\begin{figure}[H]
\centering
\includegraphics[width=0.5\linewidth]{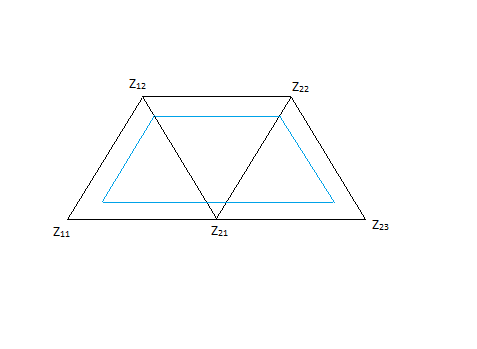}
\caption{}
\label{}
\end{figure}

\qed
\end{example}

\subsection{Boundary divisors and Gorenstein toric varieties}

In Section \ref{combinatorics}, we showed that blowing-up a Stanley-Reisner scheme along the boundary twice yields an isomorphism. Let us recall the following fact that follows immediately from the universal property of blow-ups (see Section \ref{universal}):

\begin{lemma}\label{BU-iso}
The blow-up of a scheme $X$ along some closed subscheme $Y$ is an isomorphism iff $Y$ is an effective Cartier divisor of $X$.
\end{lemma}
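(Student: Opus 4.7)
The plan is to prove both directions directly from the universal property of blow-ups referenced in Section \ref{universal}. Recall that the blow-up $\pi:\widetilde{X}\to X$ of $X$ along $Y$ is characterized as the universal morphism such that $\pi^{-1}(Y)$ is an effective Cartier divisor on $\widetilde{X}$: given any morphism $f:Z\to X$ for which $f^{-1}(Y)$ is an effective Cartier divisor on $Z$, there exists a unique morphism $\tilde{f}:Z\to \widetilde{X}$ with $\pi\circ \tilde{f}=f$.

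For the forward direction, suppose $\pi:\widetilde{X}\to X$ is an isomorphism. By construction the exceptional divisor $E:=\pi^{-1}(Y)$ is an effective Cartier divisor on $\widetilde{X}$. Since $\pi$ is an isomorphism of schemes, the scheme structure on $Y$ is carried isomorphically onto that of $E=\pi^{-1}(Y)$, so $Y\subset X$ is also an effective Cartier divisor.

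For the reverse direction, assume $Y$ is already an effective Cartier divisor in $X$. Then the identity morphism $\id_X:X\to X$ has the property that $\id_X^{-1}(Y)=Y$ is an effective Cartier divisor. By the universal property there is a unique morphism $j:X\to \widetilde{X}$ with $\pi\circ j=\id_X$. In the other direction, both $\pi$ and $\id_{\widetilde{X}}$ are morphisms $\widetilde{X}\to \widetilde{X}$ which, after post-composition with $\pi$, yield the same map $\pi:\widetilde{X}\to X$ (note that $\pi^{-1}(Y)$ is Cartier in $\widetilde{X}$). By the uniqueness clause in the universal property applied to the test morphism $\pi:\widetilde{X}\to X$, we conclude $j\circ \pi=\id_{\widetilde{X}}$. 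Thus $j$ and $\pi$ are mutually inverse, and $\pi$ is an isomorphism.

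The only subtle point, and what I would treat most carefully, is invoking the uniqueness half of the universal property to conclude $j\circ\pi=\id_{\widetilde{X}}$; everything else is a formal consequence. No further geometric input is needed, so the argument is essentially self-contained once the universal property is in hand.
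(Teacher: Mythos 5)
Your proof is correct and follows exactly the route the paper intends: the paper does not write out an argument but simply asserts the lemma ``follows immediately from the universal property of blow-ups,'' and your write-up is precisely that derivation made explicit, including the uniqueness clause needed to get $j\circ\pi=\id_{\widetilde{X}}$. No gaps.
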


This means that the (reduced) boundary divisor is Cartier after the first blow-up in the sequence described above.

\begin{proposition}\label{SR-cartier}
Let $X$ be a Stanley-Reisner scheme, and consider the sequence

$$ X=X_0\stackrel{\pi_0}{\leftarrow} X_1 \stackrel{\pi_1}{\leftarrow}X_2 $$
where $\pi_i$ is the blow-up of $X_i$ along the reduction of the boundary $\partial X_i$ and $X_1$ and $X_2$ are the total transforms. Then $\partial X_1$ is an effective Cartier divisor.
\end{proposition}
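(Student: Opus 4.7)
The plan is to deduce the proposition directly by combining Theorem \ref{stable} with the universal property of blowing-up (via Lemma \ref{BU-iso}), after reducing to an affine-local check where the combinatorics developed in Section \ref{combinatorics} applies cleanly.

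First I would verify that $X_1$ is covered by affine toric patches indexed by the maximal cones of $\pi^1(C(\Delta_I))$, i.e. that the scheme-theoretic blow-up computed by the Rees algebra agrees with the combinatorial picture from Lemma \ref{BU}. This is the content of Lemma \ref{BU} applied to $X_0$ with torus-invariant center $\partial X_0$: one obtains $X_1 = \proj(k[S_{\widetilde{C(\Delta_I)}}])$, whose standard affine charts are precisely the toric schemes $\spec(k[S_\sigma])$ as $\sigma$ ranges over maximal cones of $\pi^1(C(\Delta_I))$. Under this identification, the restriction of the reduced boundary $\partial X_1$ to such a chart is exactly the reduced torus-invariant subscheme corresponding to the boundary of $\sigma$ inside the ambient polytopal complex.

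Next, since the Cartier property is local on the target, it suffices to prove that on each affine chart $U_\sigma \subset X_1$ the restriction $\partial X_1 \cap U_\sigma$ is an effective Cartier divisor. Applying Lemma \ref{BU} a second time (now with $X_1$ in place of $X_0$ and the center $\partial X_1$), blowing up $U_\sigma$ along $\partial X_1 \cap U_\sigma$ is modelled combinatorially by passing from $\pi^1(C(\Delta_I))|_\sigma$ to $\pi^2(C(\Delta_I))|_\sigma$. By Theorem \ref{stable} we have $\pi^2(C(\Delta_I)) = \pi^1(C(\Delta_I))$, so the associated toric schemes are isomorphic and the blow-up map on $U_\sigma$ is an isomorphism. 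Lemma \ref{BU-iso} then immediately gives that $\partial X_1 \cap U_\sigma$ is Cartier on $U_\sigma$; gluing these local statements yields the claim on $X_1$.

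The only subtle step — and the one I would expect to take the most care — is the translation between the combinatorial equality $\pi^2 = \pi^1$ and the scheme-theoretic statement that the blow-up morphism $\pi_1\colon X_2 \to X_1$ is an isomorphism (as opposed to merely producing an isomorphic toric scheme in the abstract). The essential point is that the Rees algebra describing the blow-up of $U_\sigma$ along the reduced torus-invariant boundary is, in the character lattice, generated in degree one by the lattice points of $\pi^2(C(\Delta_I))|_\sigma$ scaled by $\tfrac{1}{1}$, which by Theorem \ref{stable} is identified with the degree-one piece $\pi^1(C(\Delta_I))|_\sigma$ itself; hence the natural map $U_\sigma \to \mathrm{Bl}_{\partial X_1 \cap U_\sigma}(U_\sigma)$ is the identity on semigroup algebras and thus a genuine isomorphism of schemes. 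With this in hand, Lemma \ref{BU-iso} closes the argument, and no further global input is needed.
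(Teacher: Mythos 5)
Your proposal follows essentially the same route as the paper: invoke Theorem \ref{stable} to see that the second blow-up changes nothing, then apply Lemma \ref{BU-iso} to conclude that $\partial X_1$ is Cartier. The only difference is that you work chart-by-chart and explicitly address the point the paper leaves implicit --- that the combinatorial equality $\pi^2(C(\Delta_I))=\pi^1(C(\Delta_I))$ yields that the blow-up \emph{morphism} $\pi_1\colon X_2\to X_1$ is an isomorphism (not merely an abstract isomorphism of toric schemes), which is exactly what Lemma \ref{BU-iso} requires; this is a worthwhile clarification but not a different proof.
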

\begin{proof}
By Theorem \ref{stable}, the polytopal complexes $\Delta_1$ and $\Delta_2$ associated to $X_1$ and $X_2$ are equivalent. Therefore $X_1$ is isomorphic to $X_2$. By Lemma \ref{BU-iso}, $\partial X_1$ is a Cartier divisor.
\end{proof}

Note that the process in Section \ref{combinatorics} defines the boundary $\partial X_1$ as the reduction of the exceptional divisor. The components of the exceptional divisor could a priori have multiplicity $>1$, which means that the blow-up along $\partial X_1$ would correspond to a weighted removal of the boundary followed by taking the convex hull of what remains. While the exceptional divisor is Cartier, its reduction might not be, and in general this might not produce an isomorphism like in Proposition \ref{SR-cartier}.

In the Stanley-Reisner case however, the reduced boundary divisor of the total transform is Cartier, but it is not immediately clear that it is \textit{the} boundary divisor in the sense that we want it to be. That is, we do not know whether $\partial X_1$ is anticanonical. We will use Frobenius splittings to guide our understanding of anticanonical for reducible schemes. First note that irreducible normal toric varieties are easy to Frobenius split.

\begin{lemma}\label{torus-invariant}
Let $X$ be a toric variety (irreducible and normal). Let $t_1,...,t_n$ be the coordinates on the torus $T$ coming from $\mathbb{G}_m$. Then

$$\sigma = \frac{dt_1\wedge ...\wedge dt_n}{t_1...t_n}  $$
is a rational section of $\omega_X$ and $\sigma^{1-p}$ defines the unique $T$-invariant splitting of $X$ that compatibly splits $\partial X$.

\end{lemma}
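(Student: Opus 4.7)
The plan is to verify each claim separately: that $\sigma$ extends from the torus to a rational section of $\omega_X$, that it has the expected pole divisor, that the resulting data define (not merely ``nearly define'') a Frobenius splitting compatible with $\partial X$, and finally that it is the unique $T$-invariant such splitting.

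First I would restrict to the open torus $T \subset X$. On $T \cong (\mathbb{G}_m)^n$ the form $\sigma$ is nowhere-vanishing and $T$-invariant, so it generates $\omega_T$ as a free $\mathcal{O}_T$-module. Since $X$ is normal and $T$ is open and dense, $\sigma$ determines a rational section of $\omega_X$ by restriction/extension. The next step is to compute $\text{div}(\sigma)$ on $X$. Being $T$-invariant and regular on $T$, its divisor is supported on $\partial X = \bigcup_i D_i$, and a local coordinate computation at the generic point of each torus-invariant prime divisor $D_i$ (using the primitive character dual to the ray of $D_i$ as a uniformizer) gives a pole of order exactly one along each $D_i$. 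Hence $\text{div}(\sigma) = -\partial X$, so $K_X \sim -\partial X$ and $\sigma^{1-p}$ is a rational section of $\omega_X^{1-p}$ with divisor $(p-1)\partial X$, i.e.\ a regular section of $\omega_X^{1-p}(-(p-1)\partial X)^{\vee}$; in particular it is a global section of $\omega_X^{1-p}$.

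Via the standard recipe recalled in Section \ref{frobenius} and in Section 1.3 of \cite{Brion-Kumar}, any global section $s$ of $\omega_X^{1-p}$ defines an $\mathcal{O}_X$-linear map $\varphi_s : F_*\mathcal{O}_X \to \mathcal{O}_X$, and $\varphi_s$ is a Frobenius splitting iff $\varphi_s(1) = 1$. I would check this normalization directly on the affine torus chart: in the coordinates $t_1,\dots,t_n$, the map $\varphi_{\sigma^{1-p}}$ is precisely the standard splitting of Section \ref{trace} applied to $\mathbb{F}_p[t_1^{\pm},\dots,t_n^{\pm}]$ (i.e.\ the trace map with $f = t_1\cdots t_n$), which sends $1 \mapsto 1$. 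Since being a splitting is a statement about the global section $1 \in \mathcal{O}_X$ and this is verified on a dense open, $\varphi_{\sigma^{1-p}}$ is a splitting on all of $X$. Compatibility with the $D_i$ then follows because $\sigma^{1-p}$ has a zero of order exactly $p-1$ along each $D_i$: restricting $\varphi_{\sigma^{1-p}}$ to a neighborhood of the generic point of $D_i$ reduces, in toric coordinates, to the standard splitting of the polynomial ring in which the coordinate ideal $(t_j \text{ defining } D_i)$ is manifestly compatibly split. Hence the ideal sheaf $\mathcal{I}_{D_i}$ is preserved, and the union $\partial X$ is compatibly split.

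For uniqueness, I would use the fact that the space of $T$-invariant global sections of $\omega_X^{1-p}$ is one-dimensional: $T$-equivariance forces a section to be a constant multiple of $\sigma^{1-p}$, since $\sigma$ is (up to scalar) the unique $T$-invariant rational top form, and a different scalar would destroy the normalization $\varphi(1)=1$. By general principles every Frobenius splitting of $X$ comes from such a section (Section 1.3 of \cite{Brion-Kumar}), so any $T$-invariant splitting equals $\varphi_{\sigma^{1-p}}$, and in particular the $T$-invariant splitting compatibly splitting $\partial X$ is unique. The main subtlety I expect is bookkeeping the pole/zero orders of $\sigma^{1-p}$ along the $D_i$ to justify that it truly yields a section of $\omega_X^{1-p}$ (not a rational one with poles) and that the compatible splitting of each $D_i$ follows; once the divisor computation $\text{div}(\sigma) = -\partial X$ is in hand, everything else reduces to the standard splitting on the torus chart.
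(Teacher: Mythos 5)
Your sketch is correct and is essentially the intended argument: the paper gives no proof of this lemma, simply citing Exercise 1.3.6 of \cite{Brion-Kumar}, and your steps --- computing $\mathrm{div}(\sigma)=-\partial X$, checking the normalization $\varphi(1)=1$ on the dense torus chart where $\varphi_{\sigma^{1-p}}$ becomes the standard splitting, deducing compatibility with each $D_i$ from the order-$(p-1)$ vanishing, and getting uniqueness from the eigen-section description --- are exactly what that exercise asks for. Two minor points to tighten: on a singular normal toric variety the identification of splittings with sections of $\omega_X^{1-p}$ should be justified by restricting to the smooth locus and extending across the codimension-$\geq 2$ complement (reflexivity/normality), and a $T$-invariant splitting corresponds to a $T$-\emph{eigen}-section, i.e.\ a character multiple $\chi\cdot\sigma^{1-p}$, with the normalization $\varphi(1)=1$ then forcing $\chi$ trivial and the scalar to be $1$.
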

\begin{proof}
See Exercise 1.3.6 in \cite{Brion-Kumar}.
\end{proof}

In fact weakly normal toric schemes (see Section \ref{weakly_normal}) are also Frobenius split by the standard splitting. The next proposition shows that the total transform of the blow-up along the boundary is again Frobenius split. 

\begin{proposition}\label{toric-frobenius}
Let $X$ be a weakly normal affine toric scheme. Then the blow-up of $X$ along $\partial X$ is also Frobenius split.
\end{proposition}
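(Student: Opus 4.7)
The plan is to reduce to explicit splittings on an affine toric cover of the blow-up and then glue.

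By Lemma \ref{BU}, the blow-up $\widetilde{X}$ is again a toric scheme, associated to the polytopal complex $\pi^1(C(\Delta))$ obtained by removing $\partial C(\Delta)$ and taking the convex hull of the remaining lattice points. Although no longer affine, $\widetilde{X}$ admits a standard cover by affine toric opens $U_{\sigma}$, one for each maximal cone $\sigma$ of $\pi^1(C(\Delta))$. First I would check that each $U_{\sigma}$ is itself a weakly normal affine toric scheme: its irreducible components are the normal affine toric varieties attached to the individual maximal cones of $\sigma$, glued along the torus-orbit closures corresponding to shared walls, in exactly the same fashion as $X$ was glued from its own components. Weak normality is preserved because the gluing pattern on $\widetilde{X}$ is inherited from the gluing pattern on $X$ via the polytopal complex.

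Next, I would invoke the fact noted immediately before the proposition: each weakly normal affine toric scheme is Frobenius split by the standard splitting, and this splitting compatibly splits the boundary, i.e.\ the complement of the open torus orbit. Denote the resulting splitting on $U_{\sigma}$ by $\varphi_{\sigma}$. On each irreducible component of $U_{\sigma}$ it is the $T$-invariant splitting given by the anticanonical section $(dt_1\wedge\cdots\wedge dt_n)/(t_1\cdots t_n)$ of Lemma \ref{torus-invariant}; along each orbit-closure intersection of two components, the two local formulas match because they are both determined by the common $T$-action, so weak normality promotes the component-wise splittings to a genuine splitting of $U_{\sigma}$.

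Finally, I would glue the $\varphi_{\sigma}$ to a global splitting of $\widetilde{X}$. On an overlap $U_{\sigma}\cap U_{\sigma'}$, both $\varphi_{\sigma}$ and $\varphi_{\sigma'}$ restrict, component by component, to the unique $T$-equivariant splitting attached to $(dt_1\wedge\cdots\wedge dt_n)/(t_1\cdots t_n)$; the uniqueness clause in Lemma \ref{torus-invariant} therefore forces them to coincide. The resulting global map $\varphi:\mathcal{O}_{\widetilde{X}}\to F_{*}\mathcal{O}_{\widetilde{X}}$ is a Frobenius splitting of $\widetilde{X}$ that compatibly splits $\partial\widetilde{X}$.

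The hard part will be the gluing step in the reducible, weakly normal setting: one must verify that the component-wise $T$-invariant splittings on each chart actually descend to a splitting of the chart (using that weak normality realizes the structure sheaf as the subsheaf of sections of the normalization agreeing on orbit-closure intersections) and that the chart splittings then agree on chart overlaps. The entire argument hinges on the rigidity of the $T$-equivariant splitting; once uniqueness is in hand, no further compatibility check is needed.
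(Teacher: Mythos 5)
Your overall strategy (cover the blow-up by affine toric charts, split each chart by the standard splitting, glue by uniqueness of the $T$-invariant splitting) is not what the paper does, and as written it has a genuine gap at exactly the point where the real work lies. You assert that each chart $U_{\sigma}$ of $\widetilde{X}$ is again a weakly normal toric scheme ``because the gluing pattern is inherited from $X$,'' but weak normality of the chart (equivalently, the fact that its semigroup of lattice points is split by the standard splitting) is not a statement about gluing patterns: it is the statement that whenever $pL$ is a lattice point of the blown-up cone, so is $L$. This could a priori fail, because the blow-up operation $\conv\bigl(\cone(\Delta)\setminus\cone(\partial\Delta)\bigr)$ removes lattice points near the boundary, and one must rule out the scenario in which $L$ is removed while $pL$ survives. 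That is precisely the content of the paper's proof: the standard splitting acts as multiplication by $\tfrac{1}{p}$ on lattice points; if $pL\in\widetilde{\cone(\Delta)}$ but $L\notin\widetilde{\cone(\Delta)}$, then since $X$ itself is standard-split one has $L\in\cone(\Delta)$, hence $L\in\cone(\partial\Delta)$, and since $\cone(\partial\Delta)$ is a cone (closed under scaling by $p$) also $pL\in\cone(\partial\Delta)$, contradicting $pL\in\widetilde{\cone(\Delta)}$. Your proposal never confronts this, so the heart of the argument is missing; everything else in your outline (charts, gluing, uniqueness of the $T$-invariant splitting) is scaffolding around the unproven step.

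A secondary problem: in the generality of the proposition, $X$ is only assumed weakly normal, so its irreducible components (and those of the blow-up) need not be normal toric varieties; hence invoking Lemma \ref{torus-invariant}, which is stated for irreducible normal toric varieties, componentwise on each chart and then ``promoting'' the splittings via weak normality is not justified as stated. The paper sidesteps both issues by working directly with the lattice-point characterization of the standard splitting on the cone of the polytopal complex, which requires neither normal components nor any chart-by-chart gluing. If you repair your argument by proving the $p$-divisibility condition for the blown-up cone (the convexity argument above), the chart cover and gluing become unnecessary.
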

\begin{proof}
Weakly normal toric schemes are Frobenius split by the standard splitting. Since $X$ is affine, there is a polytopal complex $\Delta$ such that $\cone(\Delta)$ defines an affine semigroup in the character lattice $M_{\mathbb{R}}$ associated to $X$.

One can check that $X$ is Frobenius split by the standard splitting iff the following condition holds:

$$(pa_1,...,pa_n)\in \cone(\Delta)\cap\mathbb{Z}^n \Rightarrow (a_1,...,a_n)\in \cone(\Delta)\cap\mathbb{Z}^n $$

In other words, the standard splitting acts as the multiplication by $\frac{1}{p}$ map on lattice points.

The blow-up of $X$ along $\partial X$ can be viewed in the character lattice by taking the convex hull of the lattice points in $\cone(\Delta)\setminus\cone(\partial(\Delta))$ by Section \ref{combinatorics}. Suppose the total transform was not Frobenius split by the standard splitting. Then there would be a lattice points $L = (pa_1,...,pa_n)\in \widetilde{\cone(\Delta)} $ such that $ \frac{1}{p}L \notin \widetilde{\cone(\Delta)}$. Since $\frac{1}{p}L \in \cone(\Delta)$, we conclude that $\frac{1}{p}L \in \cone(\partial(\Delta))$. But then $L \in \cone(\partial(\Delta))$ (since it is a cone) which means that $L$ couldn't be in $\widetilde{\cone(\Delta)}$, a contradiction.
\end{proof}

\begin{corollary}
Let $X$ be a Stanley-Reisner scheme. Then $X$ is Frobenius split by the standard splitting.
\end{corollary}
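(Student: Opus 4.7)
The plan is to deduce the result directly from the combinatorial criterion for the standard splitting that was established inside the proof of Proposition \ref{toric-frobenius}, using the semigroup description of $X$ coming from Lemma \ref{SR_standard}.

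Write $X = \spec(k[x_1,\dots,x_n]/I)$ with $I$ a squarefree monomial ideal, and let $\Delta_I$ be its Stanley-Reisner complex. By Lemma \ref{SR_standard}, the affine semigroup $S_{\cone(\Delta_I)} \subset M_{\mathbb{R}} \cong \mathbb{R}^n$ associated to $X$ is the union, over facets $\sigma$ of $\Delta_I$ with vertex sets $S_\sigma \subset \{1,\dots,n\}$, of the coordinate orthants
\[
C_{S_\sigma} = \{x \in \mathbb{R}^n : x_i \geq 0 \text{ for } i \in S_\sigma,\ x_j = 0 \text{ for } j \notin S_\sigma\}.
\]
I would then verify the criterion from the proof of Proposition \ref{toric-frobenius}: namely that $(pa_1,\dots,pa_n) \in \cone(\Delta_I) \cap \mathbb{Z}^n$ implies $(a_1,\dots,a_n) \in \cone(\Delta_I) \cap \mathbb{Z}^n$. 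If $(pa_1,\dots,pa_n)$ lies in some $C_{S_\sigma}$, positivity of $p$ makes the condition $pa_i \geq 0$ equivalent to $a_i \geq 0$ for $i \in S_\sigma$, and makes $pa_j = 0$ equivalent to $a_j = 0$ for $j \notin S_\sigma$, so $(a_1,\dots,a_n) \in C_{S_\sigma}$. By the dictionary between standard-splitting behavior and $\frac{1}{p}$-scaling on lattice points, this gives the desired splitting of $X$.

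The only subtlety — not really an obstacle — is that $X$ is generally reducible, so one might worry whether the standard splitting is even well-defined on $X$. It is: the criterion is applied componentwise via the orthant decomposition, and $\frac{1}{p}$-scaling preserves both each orthant $C_{S_\sigma}$ and the coordinate subspaces along which the orthants are glued. Equivalently, each component is a coordinate affine space and each intersection of components is again a coordinate subspace, so $X$ is a weakly normal affine toric scheme and Proposition \ref{toric-frobenius} applies directly. Either route shows that the standard splitting on the ambient polynomial ring descends to a Frobenius splitting of $X$ that compatibly splits every coordinate component, which is exactly what is claimed.
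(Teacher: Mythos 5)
Your argument is correct, but it runs through a different mechanism than the paper's. The paper's proof is purely ideal-theoretic and componentwise: each irreducible component of $X$, and each intersection of components, is a coordinate subspace of $\mathbb{A}^n$, and coordinate subspaces (equivalently, squarefree monomial ideals) are exactly what the standard splitting of $k[x_1,\dots,x_n]$ compatibly splits, so the splitting descends to $X$. You instead work on the character-lattice side: you take the criterion extracted in the proof of Proposition \ref{toric-frobenius} (the standard splitting is the ``multiply by $\tfrac{1}{p}$'' map on lattice points, so one needs $(pa_1,\dots,pa_n)\in \cone(\Delta_I)\cap\mathbb{Z}^n \Rightarrow (a_1,\dots,a_n)\in\cone(\Delta_I)\cap\mathbb{Z}^n$) and verify it orthant-by-orthant using Lemma \ref{SR_standard}; since $p>0$, scaling by $\tfrac1p$ preserves each orthant $C_{S_\sigma}$, so the criterion holds. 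That is a legitimate and self-contained check, and it has the advantage of being exactly the same combinatorial test the paper later applies to the blow-up, so your route makes the corollary and Proposition \ref{toric-frobenius} visibly instances of one criterion; the paper's route is shorter and needs no cone picture at all. One caution about your fallback argument: deducing the corollary from ``$X$ is a weakly normal affine toric scheme, hence split by the standard splitting'' leans on an assertion the paper states but does not prove, and on weak normality of $X$, which in this paper is normally obtained \emph{from} Frobenius splitting (Lemma \ref{frobenius-weaklynormal}); so stated as the sole argument it would be uncomfortably close to circular. Your primary route via the lattice criterion avoids this entirely and should be regarded as the actual proof.
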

\begin{proof}
The components of $X$ and the components of their intersections are coordinate subspaces, so each component is normal and each split by the standard splitting.
\end{proof}

In the (irreducible) toric variety case, the blow-up along the boundary is Frobenius split by the standard splitting. By Section \ref{frobenius}, the standard splitting is defined using the unique toric invariant section of the anticanonical bundle, which has to be the boundary divisor. In this way we know that the boundary of the total transform is anticanonical.

We are dealing with reducible toric schemes however, so we need to take greater care in how we show that the boundary divisor is anticanonical. We have shown that the total transform of a Frobenius split toric variety is again split. This Frobenius splitting however does not necessarily come from a section of the anticanonical bundle as in Section \ref{frobenius}. At this point we need to decide how we want to define the anticanonical divisor for reducible schemes.

As motivation, recall that by Theorem \ref{KnutsonMiller04} we know that the $\partial X^w\cap X_{\circ}^v$ degenerates to the boundary of a subword complex. We would like to say that the anticanonical divisor $\partial X^w\cap X_{\circ}^v$ has degenerated to an anticanonical divisor. Consider Example \ref{SR-example}. In this case, we don't want to include the interior facet corresponding to the gluing of two orthants along the coordinate subspace $w=z=0$. Let us also adopt the philosophy that anticanonical sections should tell us what is Frobenius split. We know that $w=z=0$ is Frobenius split if $w=0$ and $z=0$ are, further showing that interior components used for gluing shouldn't be included in our definition of anticanonical.

By Section \ref{gorenstein}, we know how to define the anticanonical bundle for normal varieties. Given a reduced equidimensional algebraic variety $X$, let us denote its normalization by $\nu:\bar{X}\rightarrow X$. In this normalization process, irreducible components of $X$ have become disjoint. Suppose also that the irreducible components of $X$ are normal. Then the ramification locus $R\subset \bar{X}$ of $\nu$ is just the fibre over the intersection of the components (in our previous example, $w=z=0$). This ramification locus together with the components coming from the boundary of $X$ should be anticanonical (in the usual sense) in $\bar{X}$. This motivates the following definition:

\begin{definition}\label{anticanonical_reducible}
Let $X$ be a reduced equidimensional scheme whose irreducible components are normal. Let $\nu:\bar{X}\rightarrow X$ be its normalization and $R\subset \bar{X}$ the ramification locus. A divisor $D\subset X$ is called \textbf{anticanonical} if:
\begin{itemize}
\item $(\nu^{-1}(D)\cup R)\cap \bar{X}_{reg}$ is anticanonical in $\bar{X}_{reg}$\\
\item $\codim(\nu^{-1}(D)\cap R)>1$.
\end{itemize}
\end{definition}

The last condition ensures that invariant toric divisors in a toric scheme $X$ are either part of the branch locus or part of the anticanonical divisor of $X$ (indeed it forces the branch locus to be a divisor). We get the following immediate results from the definition.

\begin{lemma}\label{boundary-anticanonical}
Let $X$ be a reduced equidimensional toric scheme associated to the polytopal complex $\Delta$. Suppose that $\Delta$ is homeomorphic to a ball or sphere. Also assume that the components of $X$ are normal. Then $\partial \Delta$ is anticanonical.
\end{lemma}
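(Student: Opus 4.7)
The plan is to directly verify the two conditions from Definition \ref{anticanonical_reducible}. Since the irreducible components of $X$ are assumed normal, the normalization $\nu:\bar{X}\to X$ is simply the disjoint union of the components $X_i$, each a normal affine toric variety corresponding to the cone $C(P_i)$ on a maximal polytope $P_i$ of $\Delta$. Under this correspondence, the torus-invariant prime divisors of $X_i$ are in bijection with the facets of $P_i$, and by Lemma \ref{torus-invariant} the sum of all these divisors is an anticanonical divisor on $X_i$.

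The main combinatorial step is to classify each facet of a maximal polytope $P_i$ as one of two types: either it is contained in the topological boundary $\partial\Delta$ (and hence appears in no other maximal polytope), or it is an interior facet shared with some other maximal polytope $P_j$. Because $\Delta$ is homeomorphic to a ball or a sphere, these two types exhaust all facets and are mutually exclusive. The interior facets are precisely where distinct components of $X$ are glued, so under $\nu$ the ramification locus $R\subset\bar{X}$ is exactly the union of the toric divisors on each $X_i$ corresponding to its interior facets. Dually, $\nu^{-1}(\partial\Delta)$ is exactly the union over all $i$ of the toric divisors on $X_i$ corresponding to facets of $P_i$ that lie in $\partial\Delta$.

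To verify the first condition, note that on each component $X_i$ the union $\nu^{-1}(\partial\Delta)\cup R$ restricts to the sum of all torus-invariant prime divisors of $X_i$, which is anticanonical by Lemma \ref{torus-invariant}. Since anticanonical can be tested on the regular locus of $\bar{X}$ (by normality of each $X_i$, extending the divisor uniquely from $\bar{X}_{reg}$), the first bullet of Definition \ref{anticanonical_reducible} holds. For the second condition, observe that on a fixed component $X_i$ the intersection $\nu^{-1}(\partial\Delta)\cap R$ equals the intersection of boundary-type facet divisors with interior-type facet divisors, which corresponds to codimension-$2$ faces of $P_i$, hence has codimension at least two in $X_i$ and therefore in $\bar{X}$.

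The only subtlety, and the main point where one must be careful, is the ball-versus-sphere dichotomy: when $\Delta$ is a sphere there are no boundary facets, so $\partial\Delta=\emptyset$ and the statement reduces to showing that $R\cap\bar{X}_{reg}$ is itself anticanonical in $\bar{X}_{reg}$, which is the same computation as above. In both cases the manifold-like property of $\Delta$ (each codimension-one face lies in exactly one or two maximal polytopes) is what prevents any facet from being simultaneously interior and boundary, and this is precisely the hypothesis being used.
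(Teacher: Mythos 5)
Your proof is correct and follows essentially the same route as the paper's: identify the normalization as the disjoint union of the normal toric components, recognize the ramification locus as the fibre over the interior facets so that $R\cup\nu^{-1}(\partial\Delta)$ is the full union of torus-invariant divisors on each component (anticanonical by Lemma \ref{torus-invariant}), and use the ball/sphere structure to get the codimension $>1$ condition. Your write-up is simply a more detailed version of the paper's argument, spelling out the facet dichotomy and the sphere case explicitly.
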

\begin{proof}
Let $\nu: \bar{X}\rightarrow X$ be the normalization map. Since the components of $X$ are normal, the ramification locus $R$ of $\nu$ is just the fibre over facets of $\Delta\setminus\partial\Delta$. Then $R\cup\nu^{-1}(\partial\Delta)$ is just the union of the toric invariant divisors corresponding to the boundary of each irreducible component, so is anticanonical in $\bar{X}$ by \ref{torus-invariant}.

Since $\Delta$ is a ball or sphere, the intersection of the branch locus and $D$ has codimension $>1$.
\end{proof}

\begin{corollary}\label{subword-anticanonical}
Let $X$ be an affine toric scheme associated to the subword complex $\Delta = \Delta(Q,w)$. Then $\partial X$ is anticanonical. Furthermore $X$ is Gorenstein iff the blow-up along the $\partial X$ is an isomorphism.
\end{corollary}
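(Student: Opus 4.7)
The plan is to chain together three of the results already established in this section. First I would verify the hypotheses of Lemma \ref{boundary-anticanonical} for $\Delta=\Delta(Q,w)$: by the Proposition in Section \ref{subword} (Theorem 3.7 of \cite{KM}), the subword complex is homeomorphic to a ball or a sphere, and by Lemma \ref{SR_standard} each maximal cone in $C(\Delta)$ is the cone on a standard simplex, so the irreducible components of $X$ are coordinate subspaces of $\mathbb{A}^{|Q|}$ and in particular normal. Lemma \ref{boundary-anticanonical} then gives immediately that $\partial X$ is anticanonical in the sense of Definition \ref{anticanonical_reducible}.

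For the Gorenstein equivalence, the key observation is Lemma \ref{BU-iso}: the blow-up of $X$ along $\partial X$ is an isomorphism iff $\partial X$ is an effective Cartier divisor. In the ``$\Leftarrow$'' direction, if the blow-up is an isomorphism then $\partial X$ is Cartier, and because $\partial X$ is anticanonical this forces the dualizing sheaf of $X$ to be invertible, i.e. $X$ is Gorenstein. In the ``$\Rightarrow$'' direction, if $X$ is Gorenstein then $\omega_X$ is an invertible sheaf; combined with the explicit identification of $\partial X$ as a representative of the anticanonical class via Definition \ref{anticanonical_reducible}, this makes $\partial X$ itself Cartier, and Lemma \ref{BU-iso} closes the loop.

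The main obstacle I expect is justifying the ``anticanonical Cartier implies Gorenstein'' step outside the normal setting, since Lemma \ref{anti-gorenstein} is formulated only for normal varieties. To handle this one works on the normalization $\nu\colon\bar X\to X$, where by Definition \ref{anticanonical_reducible} the pullback $\nu^{-1}(\partial X)\cup R$ is anticanonical on each (normal) component of $\bar X$. Lemma \ref{anti-gorenstein} applies componentwise to yield an invertible dualizing sheaf on $\bar X$, and the codimension condition in Definition \ref{anticanonical_reducible} ensures that the gluing locus where components of $X$ meet is small enough that the invertibility descends back to an invertible $\omega_X$. Once this reducible-scheme analogue of Lemma \ref{anti-gorenstein} is in hand, the corollary follows by combining it with Lemma \ref{BU-iso} and the first part of the statement.
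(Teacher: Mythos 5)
Your proposal follows the paper's own proof: the ball/sphere property of $\Delta(Q,w)$ together with Lemma \ref{boundary-anticanonical} (whose normality hypothesis you rightly check via Lemma \ref{SR_standard}) gives that $\partial X$ is anticanonical, and then Lemma \ref{BU-iso} translates ``blow-up is an isomorphism'' into ``$\partial X$ is Cartier'' and back, with the Gorenstein property equated to this anticanonical divisor being Cartier. Your final paragraph justifying ``anticanonical Cartier $\Rightarrow$ Gorenstein'' on the normalization supplies more detail than the paper, which treats that equivalence as built into Definition \ref{anticanonical_reducible} and the dualizing-sheaf discussion of Section \ref{semicontinuous}, but it is the same route, not a different one.
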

\begin{proof}
By Section \ref{subword}, $\Delta$ is a ball or sphere. By Lemma \ref{boundary-anticanonical}, $\partial X$ is anticanonical. If $X$ is Gorenstein then $\partial X$ is a Cartier divisor which means the blow-up is an isomorphism by Lemma \ref{BU-iso}. Conversely if the blow-up is an isomorphism, then $\partial X$ is a Cartier divisor and $X$ is Gorenstein.
\end{proof}

The corollary shows that the simplicial complex from Example \ref{degen-example} is not Gorenstein. Gorenstein simplicial complexes have been studied in general, and necessary and sufficient conditions for determining when a Stanley-Reisner complex is Gorenstein can be found in \cite{Notbohm} and \cite{Stanley}. These conditions are formulated using the homology of links of faces in the complex. While this description is useful in a more general setting, it does not indicate when general polytopal complexes are Gorenstein, which is necessary for dealing with the blow-up of the Stanley-Reisner complex.

\begin{proposition}\label{SR-anticanonical}
Let $X$ be as in Lemma \ref{subword-anticanonical}. Then the boundary divisor of the blow-up of $X$ along $\partial X$ is an anticanonical divisor.
\end{proposition}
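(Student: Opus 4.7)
The plan is to apply Lemma~\ref{boundary-anticanonical} directly to $\widetilde{X}$ together with its associated polytopal complex $\widetilde{\Delta}$. Recall from Section~\ref{combinatorics} (in particular Lemma~\ref{BU}) that the combinatorial planing operation---removing $\partial\Delta$ and taking convex hulls in each cone---realizes $\widetilde{X}$ as a toric scheme associated to $\widetilde{\Delta}$, and identifies the reduction of the exceptional divisor, namely $\partial \widetilde{X}$, with the polytopal boundary $\partial \widetilde{\Delta}$. It therefore suffices to verify the three hypotheses of Lemma~\ref{boundary-anticanonical} for $\widetilde{X}$ and $\widetilde{\Delta}$.

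The routine hypotheses can be dispatched quickly. The Stanley-Reisner scheme $X$ is reduced, equidimensional, and torus-invariant, and $\partial X$ is cut out by a radical torus-invariant ideal, so these properties pass to the blow-up $\widetilde{X}$. Each irreducible component of $\widetilde{X}$ is the strict transform of some $X_F \cong \mathbb{A}^{d_F}$ along the torus-invariant subscheme $\partial X \cap X_F$, and is itself a toric variety corresponding to a rational polyhedral subdivision of the orthant cone $C(F)$; such toric varieties are automatically normal.

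The substantive step---and the main obstacle---is showing that $\widetilde{\Delta}$ is homeomorphic to a ball or sphere. By Section~\ref{subword}, the subword complex $\Delta = \Delta(Q,w)$ is already a ball. On each constituent simplex $F \in \Delta$, the planed cell $\widetilde{F}$ is obtained by intersecting $C(F)$ with the half-spaces cutting off exactly those facets of $F$ that lie in $\partial \Delta$; by the proof of Lemma~\ref{BU_simplex} this is a convex polytope, hence a polyhedral ball. These $\widetilde{F}$ glue together along the interior faces of $\Delta$---which are not planed and hence are preserved verbatim---in exactly the combinatorial pattern of $\Delta$. A piecewise-linear deformation retraction shrinking each facet slightly away from $\partial \Delta$ then exhibits $\widetilde{\Delta}$ as the complement in $\Delta$ of an open collar of $\partial \Delta$, which is again a ball. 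Some care is needed to check that the planing is carried out coherently across adjacent simplices sharing interior faces, but this coherence is automatic since interior faces are untouched. With all three hypotheses verified, Lemma~\ref{boundary-anticanonical} immediately gives the conclusion that $\partial \widetilde{X} = \partial \widetilde{\Delta}$ is anticanonical in $\widetilde{X}$.
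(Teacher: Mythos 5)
Your proposal follows the paper's own proof: both apply Lemma \ref{boundary-anticanonical} to the blow-up by checking that each irreducible component of the total transform remains normal (the paper phrases this as the associated affine semigroups in the character lattice remaining saturated, which is the more direct justification than your fan-subdivision remark) and that the planed complex $\widetilde{\Delta}$ is still homeomorphic to a ball or sphere. Your collar-retraction argument merely fills in what the paper asserts as clear, so the approach is essentially identical.
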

\begin{proof}
Each irreducible component in the total transform remains normal. Indeed the affine semigroup $S$ in the character lattice corresponding to each component remains saturated. Each component was a coordinate subspace, and their boundaries and intersections were again coordinate subspaces.

It is clear that the blow-up of $\Delta$ along $\partial \Delta$ remains a ball or sphere.
\end{proof}

\begin{corollary}\label{BU-gorenstein}
Let $X$ be a Stanley-Reisner scheme associated to a subword complex $\Delta = \Delta(Q,w)$. The blow-up of $X$ along its boundary $\partial X$ is Gorenstein.
\end{corollary}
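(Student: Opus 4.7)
The plan is to assemble the three results already in place: Proposition \ref{SR-anticanonical} (the boundary of the total transform is an anticanonical divisor in the generalized sense of Definition \ref{anticanonical_reducible}), Theorem \ref{stable} (a second boundary blow-up is equivalent to the first), and Proposition \ref{SR-cartier} (hence $\partial \widetilde{X}$ is Cartier), and then invoke a reducible version of Lemma \ref{anti-gorenstein}. The overall strategy mirrors the informal summary given after Theorem \ref{SR-gorenstein} in the introduction.

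Concretely, write $\widetilde{X}$ for the blow-up of $X$ along $\partial X$, and define $\partial \widetilde{X}$ as the reduction of the exceptional locus together with the strict transform of $\partial X$, i.e.\ the divisor corresponding to the topological boundary of $\widetilde{\Delta}$ under the dictionary of Section \ref{combinatorics}. The first step is to identify $\partial \widetilde{X}$ as the correct anticanonical candidate: Proposition \ref{SR-anticanonical} already establishes that, because $\widetilde{\Delta}$ remains a ball or sphere and each irreducible component of $\widetilde{X}$ is still a normal toric piece (a coordinate subspace with a planed-off face), $\partial \widetilde{X}$ satisfies the two bullet conditions of Definition \ref{anticanonical_reducible}.

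The second step is to verify that $\partial \widetilde{X}$ is Cartier. Here I would apply Theorem \ref{stable}, which says $\pi^{2}(C(\Delta)) = \pi^{1}(C(\Delta))$, and then translate through Lemma \ref{BU}: the blow-up of $\widetilde{X}$ along $\partial \widetilde{X}$ is an isomorphism. Lemma \ref{BU-iso} then gives Cartierness of $\partial \widetilde{X}$ on the nose — this is exactly the content of Proposition \ref{SR-cartier} with $X_{1} = \widetilde{X}$. Combined with the first step, $\widetilde{X}$ admits a Cartier anticanonical divisor.

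The third step is to conclude the Gorenstein property from a Cartier anticanonical divisor. For irreducible normal varieties this is Lemma \ref{anti-gorenstein}. In the reducible setting of Definition \ref{anticanonical_reducible} one passes to the normalization $\nu:\overline{\widetilde{X}}\to \widetilde{X}$; the pullback $\nu^{-1}(\partial \widetilde{X})\cup R$ is an honest Cartier anticanonical divisor on each normal component, so the dualizing sheaf restricted to each component is invertible, and the codimension condition on $\nu^{-1}(\partial \widetilde{X})\cap R$ guarantees the components glue to an invertible dualizing sheaf on $\widetilde{X}$. I expect the main obstacle to be precisely this last step: making the passage from a Cartier anticanonical divisor in the sense of Definition \ref{anticanonical_reducible} to invertibility of the dualizing sheaf of the reducible scheme $\widetilde{X}$ rigorous, since it is the only place the argument leaves the purely combinatorial picture of planing off boundary faces in the character lattice. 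Everything else reduces to polytopal bookkeeping already carried out in Section \ref{combinatorics}.
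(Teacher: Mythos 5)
Your proposal follows essentially the same route as the paper's proof: Proposition \ref{SR-anticanonical} for anticanonicality of the boundary of the total transform, Proposition \ref{SR-cartier} (which, as you note, rests on Theorem \ref{stable} and Lemma \ref{BU-iso}) for Cartierness, and Lemma \ref{anti-gorenstein} to conclude the Gorenstein property. Your additional remarks on the third step --- passing to the normalization to justify the reducible analogue of Lemma \ref{anti-gorenstein} under Definition \ref{anticanonical_reducible} --- flesh out a point the paper leaves implicit, but the argument is the same.
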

\begin{proof}
By Proposition \ref{SR-anticanonical}, the boundary divisor of the blow-up is an anticanonical divisor. By Proposition \ref{SR-cartier}, it is also an effective Cartier divisor. By Lemma \ref{anti-gorenstein}, the blow-up is Gorenstein.
\end{proof}

These results hold more generally for Stanley-Reisner complexes which are homeomorphic to a ball or sphere (although we are only concerned with the ones which are also subword complexes). 

We have shown that the blow-up of the degeneration is Gorenstein. What remains is to show that the blow-up of $X^w\cap X_v^{\circ}$ along $\partial X^w\cap X_v^{\circ}$ can be degenerated to this Gorenstein variety. This is the content of Section \ref{degen-commute}.

\subsection{Combinatorial description of exceptional components}\label{SR_Exceptional}

When we blow-up a Stanley-Reisner scheme along its boundary, we get an exceptional divisor whose components can be seen in the blow-up of the subword complex. We wish to give a combinatorial description for any new exceptional components. First we need to understand which faces in a simplicial complex give rise to new facets in the boundary of the blow-up.

Let $\Delta$ be a simplicial complex homeomorphic to a ball with facets $\sigma_1,...,\sigma_n$. Let us define $\tau_i$ as the simplicial complex formed by intersecting $\sigma_i$ with the boundary $\partial\Delta$ and throwing away any facets of $\sigma_i$. More precisely, if $\mu_{i,1}, ... ,\mu_ {i,n_i}$ are the facets of $\sigma_i$, then define $S_i=\{j|\mu_{i,j} \in \sigma_i\cap\partial \Delta\}$ and let

\[ \tau_i =\displaystyle\cl((\sigma_i\cap\partial \Delta)\setminus \bigcup_{j\in S}\cl(\mu_{i,j}))\]
where $\cl(\cdot)$ is the closure of a set of simplices.

We will say that a facet $F$ of $\partial\widetilde{\Delta}$ whose image under the blow-up map is contained in the exceptional set of $\Delta$ is a \textbf{new facet} (seen below in green).

\begin{figure}[h]
\centering
\includegraphics[width=0.3\linewidth]{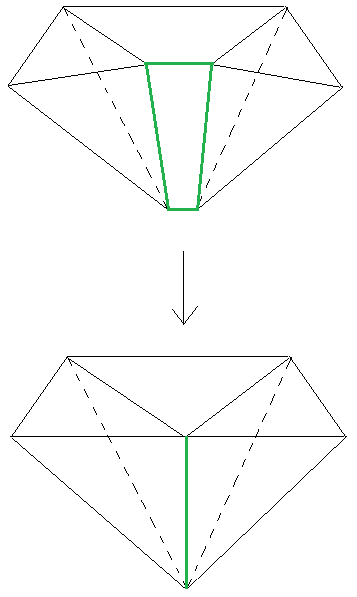}
\caption{}
\label{NewFace_3}
\end{figure}

 It is fairly easy to see that a maximal face of $\tau_i$ will produce a new facet in the boundary of the blow-up. Unfortunately these are not the only faces of $\tau_i$ which have this property.

For example, consider the complex $\Delta$ containing a tetrahedron $\sigma$ as one of its facets. If $\partial\Delta\cap \sigma$ is just one edge, then clearly blowing-up along this edge produces a new facet of the boundary (see Figure \ref{tetra1}). Similarly, if $\partial\Delta\cap \sigma$ consists of two edges, then blowing-up produces two new facets in the boundary (see Figure \ref{tetra2}). However, if instead $\partial\Delta\cap \sigma$ consists of three edges containing a common vertex, then the blow-up along this center will produce \textit{four} new facets in the boundary of the blow-up -- one for each of the edges and one facet coming from the vertex joining the three edges (see Figure \ref{tetra4}). Such faces will be called simplicial. More precisely, a face $F\in\partial\Delta\cap \sigma$ of codimension $k$ is said to be \textbf{simplicial} if it is the intersection of $k$ codimension $k-1$ faces in $\partial\Delta\cap \sigma$ where $k>2$. 

\begin{figure}[h]
\centering
\begin{subfigure}{.45\textwidth}
\centering
  \includegraphics[width=0.5\linewidth]{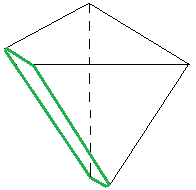}
 \caption{}
  \label{tetra1}
\end{subfigure}%
\begin{subfigure}{.5\textwidth}
  \centering
  \includegraphics[width=.45\linewidth]{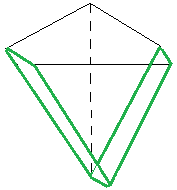}
  \caption{}
  \label{tetra2}
\end{subfigure}\\
\begin{subfigure}{.5\textwidth}
  \centering
  \includegraphics[width=.45\linewidth]{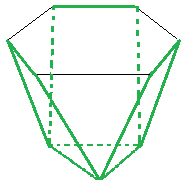}
  \caption{}
  \label{tetra4}
\end{subfigure}
\caption{}
\end{figure}

\begin{proposition}\label{New_Facets}
Let $\Delta$ be a Stanley-Reisner complex homeomorphic to a ball. Let $\tau_i$ and $\sigma_i$ be defined as above. Then there is a bijection between the set of nonempty maximal faces and simplicial faces of $\tau_i$ for $i=1,...,n$ and the set of new facets in $\partial\widetilde{\Delta}$.
\end{proposition}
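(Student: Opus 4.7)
The plan is to localize the problem to each maximal simplex $\sigma_i$ of $\Delta$ and read off the new facets from the integer hull of the blow-up polytope. First I would fix $\sigma_i$ and identify it with the standard simplex, so $C(\sigma_i)\cong\mathbb{R}^d_{\geq 0}$ and each face $F\subset\sigma_i$ corresponds to an index set $I_F\subset\{1,\ldots,d\}$ of size $\codim F$. By Lemma~\ref{BU} and the convex-hull computation used in the proof of Lemma~\ref{BU_simplex}, the blown-up polytope is then the convex hull of the lattice points $(a_1,\ldots,a_d)\in\mathbb{Z}^d_{\geq 0}$ satisfying $\sum_{j\in I_F}a_j\geq 1$ for every facet $F$ of $\sigma_i\cap\partial\Delta$. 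Any codim-$1$ facet of $\sigma_i$ lying in $\partial\Delta$ is a Cartier divisor in $\sigma_i$, so the corresponding blow-up is an isomorphism and contributes no new facet; this is the reason such faces are subtracted in the definition of $\tau_i$.

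Next I would verify the forward direction of the bijection. For each nonempty maximal face $F\in\tau_i$, the hyperplane $\{\sum_{j\in I_F}x_j=1\}$ appears as a new facet, witnessed by lattice points with support $(\{1,\ldots,d\}\setminus I_F)\cup\{j_0\}$ for a single $j_0\in I_F$. For a simplicial face $F$ of codim $k\geq 3$ written as $\bigcap_{l=1}^k G_l$, one has $I_{G_l}=I_F\setminus\{j_l\}$ with the $j_l$ running over all of $I_F$. Summing the $k$ rational inequalities $\sum_{j\in I_{G_l}}x_j\geq 1$ gives $(k-1)\sum_{j\in I_F}x_j\geq k$, which for integer lattice points sharpens to the strictly stronger $\sum_{j\in I_F}x_j\geq 2$, because no integer vector with a single nonzero entry in $I_F$ can satisfy every one of the $k$ deletion constraints. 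Taking convex combinations of the characteristic vectors of the sets $I_F\setminus\{j_l\}$ shows that $\{\sum_{j\in I_F}x_j=2\}$ meets the integer hull in a $(d-1)$-dimensional set, so it is a genuine new facet.

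The hard part will be the converse: showing no other new facets appear. My plan is to take an arbitrary facet-defining inequality $\sum_{j\in J}x_j\geq c$ of the integer hull and argue by a combinatorial case analysis that $J=I_F$ for either a maximal face (with $c=1$) or a simplicial face (with $c=2$) of $\tau_i$. The two key observations should be: (i) if $J$ is not equal to any $I_F$ for a facet $F$ of $\sigma_i\cap\partial\Delta$, then $\sum_{j\in J}x_j\geq 1$ is redundant modulo the facet inequalities already produced, and (ii) when $J=I_F$, the integer strengthening to $c=2$ is forced \emph{exactly} when every codim-$(k-1)$ face of $\sigma_i$ containing $F$ belongs to $\sigma_i\cap\partial\Delta$, since otherwise a two-element support lattice vector in $I_F$ that avoids the missing deletion would already satisfy all the remaining constraints yet violate $\sum\geq 2$. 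This is precisely the simplicial condition on $F$, and combining (i), (ii), and the forward direction yields the claimed bijection.
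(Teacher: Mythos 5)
Your setup and forward direction follow the paper's proof almost verbatim: identify each facet $\sigma_i$ with the standard simplex (Lemma \ref{SR_standard}), describe the blow-up as the convex hull of the lattice points satisfying $\sum_{j\in I_F}x_j\geq 1$ for the faces $F$ of $\sigma_i\cap\partial\Delta$, and observe that a maximal face of $\tau_i$ produces a new facet on $\{\sum_{j\in I_F}x_j=1\}$ while a simplicial face of codimension $k>2$ produces one on $\{\sum_{j\in I_F}x_j=2\}$. One small slip there: your witnesses for the simplicial facet, convex combinations of the characteristic vectors of the sets $I_F\setminus\{j_l\}$, lie on the hyperplane $\sum_{j\in I_F}x_j=k-1$, which equals $2$ only when $k=3$; the correct witnesses are the vectors $e_{j_1}+e_{j_2}$ with $j_1\neq j_2\in I_F$, padded by large entries outside $I_F$. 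That is easily repaired.

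The genuine gap is the converse, which you acknowledge but only outline. Your plan starts by taking ``an arbitrary facet-defining inequality $\sum_{j\in J}x_j\geq c$'' of the integer hull with $c\in\{1,2\}$, but that every facet of the integer hull has $0/1$ coefficients and right-hand side at most $2$ is essentially the statement to be proved, not a permissible starting point: integer hulls of regions cut out by covering constraints $\sum_{j\in I}x_j\geq 1$ can in general require inequalities with right-hand side larger than $2$ (odd-cycle/vertex-cover type inequalities), so any argument must exploit the specific structure of $\sigma_i\cap\partial\Delta$ (for instance that it is a subcomplex, so the constraint family is closed under enlarging the index set). Moreover your observations (i) and (ii) are stated as what ``should be'' true rather than proved. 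The paper handles the converse differently and more locally: it argues face by face, noting that a new facet must arise from removing some face $F$ of $\tau_i$, and that if one of the bounding hyperplanes $H_j$ (equivalently, one of the codimension-$(k-1)$ faces of $\sigma_i$ containing $F$) is missing from $\partial\Delta\cap\sigma_i$, then removing $F$ introduces no new $(n-1)$-simplex; it never attempts the global facet classification of the integer hull that your plan requires. To complete your route you would have to actually justify the restriction on the form of facet inequalities and prove (i) and (ii); alternatively, switch to the paper's per-face argument for the converse.
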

\begin{proof}
Let $\sigma$ be a facet of $\Delta$ so that $\sigma = \sigma_i$ for some $i$. By Lemma 2.8, we may assume that $\sigma$ is the standard $n$-simplex since $\Delta$ is a Stanley-Reisner complex. Suppose that $F$ is a codimension $k>1$ face of $\sigma$ contained in $\tau=\tau_i$. By a change of coordinates we can assume that $F$ corresponds to $\{x_1=...=x_{k}=0\}\cap\sigma$ in $\mathbb{A}^n$. It is contained in the $k$ codimension $k-1$ faces 
\[F_1 =\{x_2=...=x_{k}=0\}\cap\sigma \text{ , ... , } F_k = \{x_1=...=x_{k-1}=0\}\cap\sigma.\]

\vspace{2mm}
First suppose that $F$ is a maximal face so that $F_j\notin \tau$ for any $j$. Then removing $F$ and taking the convex hull of the remaining lattice points leaves a region  bounded by $\{x_1+...+x_k=1\}$ which defines a new $(n-1)$-simplex in the boundary of $\widetilde{\Delta}$.

Next suppose that $F_1,...,F_k\in \tau$ so that $F$ is a simplicial face. For $k=2$, it is clear that the blow-up along $F_1\cup F_2$ does not produce an extra facet coming from the removal of $F$. Therefore let us consider $k>2$. Again removing $F_1$ and taking the convex hull of the remaining lattice points leaves a region bounded by $H_1: \{x_2+...+x_k=1\}$. In general removing $F_j$ leaves a region bounded by $H_j:\{x_1+...+x_n-x_j=1\}$. Therefore in the blow-up, when all $F_j$ are removed, the hyperplanes $H_j$ bound a new $(n-1)$-simplex in the boundary of $\widetilde{\Delta}$ defined by $\{x_1+...+x_k=1\}$, corresponding to the removal of the face $F$. It is clear that if any of the $H_j$ are not included in this calculation, then no new $n-1$-simplex is introduced by removing $F$, which completes the proof. 
\end{proof}

We would now like to produce a subword description for the faces identified in Proposition \ref{New_Facets}. Recall that by Proposition 1.12, a face $Q\setminus P$ is in the boundary of $\Delta(Q,\pi)$ if and only if $\dem(P)\neq\pi$. Furthermore, by Lemma 3.4 in \cite{KM}, $\dem(P) \geq \pi$ for every face $Q\setminus P$ in $\Delta(Q,\pi)$. Therefore faces $Q\setminus P$ are on the boundary when $\dem(P)>\pi$.

\begin{example}
\normalfont
Consider the Kazhdan-Lusztig variety, $X_{13425}\cap X^{34512}_{\circ}$. We can take $Q = (s_2,s_1,s_3,s_2,s_4,s_3)$ where $\prod Q$ is a reduced word for $34512$, and $s_2s_3$ is the only reduced word for $\pi =13425$. Then $\Delta(Q,13425)$ is the subword complex made with three tetrahedra, labeled in red:

\begin{figure}[h]
\centering
\includegraphics[width=0.5\linewidth]{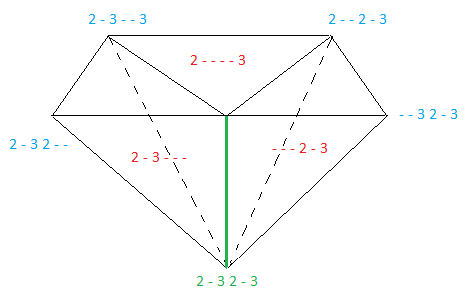}
\caption{}
\end{figure}

Any facet of $\partial\Delta$ produces a facet in $\partial\widetilde{\Delta}$ (albeit not a new facet), and corresponds to a word with Demazure product which covers $\pi$. Perhaps all facets in $\partial\widetilde{\Delta}$ come from faces of $\Delta$ corresponding to $P$ where $\dem(P)\gtrdot \pi$?

Notice that the edge $2-32-3$ (coloured in green) is contained in each of the three facets $2 - - - - 3$, $2-3---$ and $---2-3$. It is only in $2----3$ however that it corresponds to a new facet in $\partial\widetilde{\Delta}$ by Proposition \ref{New_Facets}. It has Demazure product $s_2s_3s_2$ which covers $s_2s_3$.  Indeed it is in this simplex that $2-32-3$ is maximal with respect to the property of $\dem(P)$ covering $\pi$. For example, the edge is contained in the interior faces $2 - 3 - - 3$ and $2 - - 2 - 3$ which have Demazure product equal to $\pi$.

It is worth noting that $2-32-3$ is not the only edge which has Demazure product which covers $\pi$. In fact  $\dem(213 - - 3)\gtrdot \pi$. However, it is not maximal because it is contained in the face $21 - - - 3$ which also covers $s_2s_3$. Finally, while we have maximality in terms of the faces, we have minimality in terms of the subwords themselves ($2-32-3$ contains the subword $2 - - - - 3$).
\end{example}

\begin{theorem}\label{max_face}
Let $\sigma_{Q\setminus P_1}$ be a facet of $\Delta(Q,\pi)$. Then $\tau_{Q\setminus P_2}$ is a maximal face of $\sigma_{Q\setminus P_1}\cap\partial\Delta(Q,\pi)$ if and only if $P_2$ is minimal among subwords $P\supset P_1$ satisfying $Dem(P)\gtrdot \pi$.
\end{theorem}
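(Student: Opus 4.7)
The plan is to translate the statement into pure subword combinatorics and then reduce it to a single lemma about how the Demazure product changes when one letter is inserted into a word. A face $\tau_{Q\setminus P_2}$ is contained in the facet $\sigma_{Q\setminus P_1}$ exactly when the vertex sets satisfy $Q\setminus P_2\subseteq Q\setminus P_1$, i.e.\ $P_1\subseteq P_2$, and it lies in $\partial\Delta(Q,\pi)$ exactly when $\dem(P_2)\neq\pi$, which combined with $\dem(P_2)\geq\pi$ (Lemma 3.4 of \cite{KM}) means $\dem(P_2)>\pi$ in Bruhat order. Therefore maximal faces of $\sigma_{Q\setminus P_1}\cap\partial\Delta(Q,\pi)$ correspond to inclusion-minimal $P_2\supsetneq P_1$ with $\dem(P_2)>\pi$, so the theorem amounts to showing that this minimality coincides with minimality among $P\supset P_1$ satisfying $\dem(P)\gtrdot\pi$.

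The key lemma I would establish is: \emph{if $Q'$ is obtained from a word $Q$ by inserting a single simple reflection, then either $\dem(Q')=\dem(Q)$ or $\dem(Q')\gtrdot\dem(Q)$}. To prove this I would use the characterization that $\ell(\dem(Q))$ equals the length of the longest reduced subword of $Q$: by Lemma 3.4 of \cite{KM}, every $v\in W$ whose reduced expression appears as a subword of $Q$ satisfies $v\leq\dem(Q)$, hence $\ell(v)\leq\ell(\dem(Q))$; since $Q$ contains a reduced word for $\dem(Q)$ itself, the maximum is attained. Any reduced subword of $Q'$ either avoids the inserted letter (giving a reduced subword of $Q$ of the same length) or uses it exactly once (giving, after deletion, a reduced subword of $Q$ of length one less), so $\ell(\dem(Q'))\leq\ell(\dem(Q))+1$. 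Simultaneously $\dem(Q)\leq\dem(Q')$ since reduced subwords of $Q$ remain subwords of $Q'$. The graded nature of Bruhat order---comparable elements of equal length are equal, and comparable elements whose lengths differ by one are related by a cover---then yields the lemma.

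Granting the lemma, the theorem follows directly. For the forward direction, assume $P_2$ is minimal with $P_1\subset P_2$ and $\dem(P_2)>\pi$. Pick any $q\in P_2\setminus P_1$; then $P_1\subseteq P_2\setminus\{q\}\subsetneq P_2$ still contains the reduced expression $P_1$ for $\pi$, so $\dem(P_2\setminus\{q\})\geq\pi$, while by minimality of $P_2$ it is not strictly greater, hence $\dem(P_2\setminus\{q\})=\pi$. Re-inserting $q$ and applying the key lemma places $\dem(P_2)\in\{\pi,\text{a cover of }\pi\}$, and since $\dem(P_2)>\pi$ we conclude $\dem(P_2)\gtrdot\pi$. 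Minimality of $P_2$ with respect to $\dem\gtrdot\pi$ is then automatic because $\{P\supset P_1:\dem(P)\gtrdot\pi\}\subset\{P\supset P_1:\dem(P)>\pi\}$. Conversely, if $P_2$ were minimal with $\dem(P_2)\gtrdot\pi$ but some $P_1\subset P'\subsetneq P_2$ satisfied $\dem(P')>\pi$, then choosing an inclusion-minimal such $P''\subseteq P'$ and applying the forward argument to $P''$ would give $P''\subsetneq P_2$ with $\dem(P'')\gtrdot\pi$, contradicting the minimality of $P_2$. The only nontrivial ingredient is the key lemma, which rests on the gradedness of Bruhat order together with the max-length characterization of $\dem$; everything else is bookkeeping about the two minimality conditions.
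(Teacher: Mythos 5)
Your reduction of the statement to subword combinatorics is fine: faces of $\sigma_{Q\setminus P_1}$ are exactly the $Q\setminus P$ with $P\supseteq P_1$, they lie in $\partial\Delta(Q,\pi)$ precisely when $\dem(P)>\pi$, and both directions of the theorem do reduce to the single implication ``$P_2$ inclusion-minimal among $P\supseteq P_1$ with $\dem(P)>\pi$ implies $\dem(P_2)\gtrdot\pi$.'' The gap is in how you prove that implication: your key lemma is false. Inserting a single simple reflection can raise the Demazure product by more than a cover. Take $Q=(s_1,s_1)$, so $\dem(Q)=s_1$, and insert $s_2$ in the middle to get $Q'=(s_1,s_2,s_1)$; then $\dem(Q')=s_1s_2s_1$, of length $3$, so the jump is by two in length. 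The flaw in your argument for the lemma is the deletion step: a reduced subword of $Q'$ that uses the inserted letter does not, after deleting that letter, remain reduced (in the example, $s_1s_2s_1$ minus $s_2$ is $s_1s_1$), so the inequality $\ell(\dem(Q'))\leq\ell(\dem(Q))+1$ does not follow and is in fact false in general.

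This breaks the forward direction as written, because you invoke the lemma after checking $\dem(P_2\setminus\{q\})=\pi$ for only \emph{one} letter $q\in P_2\setminus P_1$; the same example (with $\pi=s_1$, $P_2=(s_1,s_2,s_1)$, $q=s_2$) shows that a single such condition cannot bound $\ell(\dem(P_2))$ by $\ell(\pi)+1$. What saves the theorem is the full strength of minimality: \emph{every} intermediate subword, in particular every $P_2\setminus\sigma_i$ for $\sigma_i\in P_2\setminus P_1$, has Demazure product exactly $\pi$, and these conditions must be used simultaneously. This is what the paper does: it writes $P_2=T_1\sigma_1T_2\sigma_2\cdots\sigma_kT_{k+1}$ with $T_1\cdots T_{k+1}$ a reduced word for $\pi$ and $\dem(P_2\setminus\sigma_i)=\pi$ for all $i$, and then runs an induction on $k$ with length estimates of the form $\ell(\dem(P_2))\leq \ell(T_1)+\ell(\dem(\sigma_1T_2\cdots\sigma_kT_{k+1}))$ to conclude $\ell(\dem(P_2))\leq\ell(\pi)+1$, whence the cover. (Note the codimension-one case is immediate for a different reason than your lemma: there the whole word $P_2$ has only $\ell(\pi)+1$ letters, and the Demazure product's length is at most the word length.) So you need either to prove a corrected insertion statement that genuinely uses all of the conditions $\dem(P_2\setminus\sigma_i)=\pi$, or to follow the paper's inductive length bookkeeping; the rest of your bookkeeping about the two minimality conditions can then stand.
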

\begin{proof}

Suppose that $\tau_{Q\setminus P_2}$ is a maximal face of $\sigma_{Q\setminus P_1}\cap\partial\Delta(Q,\pi)$ as in Proposition \ref{New_Facets}. If $P_1$ has codimension $k$ in $P_2$, then there are simple reflections $\sigma_1,...,\sigma_k$ such that $\dem(P_2\setminus \sigma_i) =\pi$ (here $\tau_{Q\setminus P_2}$ is maximal so it is only contained in interior faces). We will proceed by induction on $k$. 

If $k=1$, so that $P_2$ is a facet of $\partial\Delta$, then $\dem(P_2)\gtrdot \pi$. Indeed we know that $P_2\setminus \sigma_1$ is a reduced expression for $\pi$, so inserting $\sigma_1$ increases the length by at most 1. If $\dem(P_2)$ did not cover $\pi$, then $\dem(P_2)=\pi$ and it would be in the interior of $\Delta(Q,w)$ which is a contradiction. 

It is worth demonstrating the $k=2$ case before proceeding with the general induction. Let $P_2 = T_1\sigma_1T_2\sigma_2T_3$ where $\dem(T_1T_2\sigma_2T_3)=\dem(T_1\sigma_1T_2T_3)=\pi$ and $T_1T_2T_3$ is a reduced word for $\pi$. Since $T_1$ is reduced and common to all of the words involved, it will appear at the beginning of any Demazure product of simple reflections. In other words, we may reduce to the case where $T_1$ is not present by noting that 

$$l(\dem(P_2))\leq l(\dem(\sigma_1T_2\sigma_2T_3)) + l(T_1).$$
Now let $T_2T_3$ be a reduced word for $\tau$. Then $\dem(T_2\sigma_2T_3)=\tau$ and 

$$l(\dem(\sigma_1T_2\sigma_2T_3))\leq 1+ l(\dem(T_2\sigma_2T_3)) = 1+ l(\tau).$$
Since $l(T_1)+ l(\tau) = l(\pi)$, we have 

$$l(\dem(P_2))\leq l(\pi) + 1$$
and since $\dem(P_2)$ cannot be $\pi$, we must have equality. 

For general $k$, we proceed in a similar way and write $ P_2 = T_1\sigma_1T_2\sigma_2T_3\dots\sigma_kT_{k+1}.$
By assumption, $\dem(P_2\setminus\sigma_i)=\pi$ and $T_1\cdot...\cdot T_{k+1}$ is a reduced expression for $\pi$. Then $T_{1}$ is a reduced word and 

$$ l(\dem(P_2)) \leq l(T_1)+ l(\sigma_1T_2\sigma_2T_3\dots\sigma_kT_{k+1}).$$
By induction $ l(\sigma_1T_2\sigma_2T_3\dots\sigma_kT_{k+1}) \leq 1+ l(T_2...T_{k+1})$, and the general result follows.

\end{proof}

\begin{corollary}\label{new_face}

Let $\sigma_{Q\setminus P_1}$ be a facet of $\Delta(Q,\pi)$. Then $\tau_{Q\setminus P_2}\subset \sigma_{Q\setminus P_1}$ corresponds to a new facet of $\partial\widetilde{\Delta(Q,\pi)}$ if $P_1$ has codimension $>1$ in $P_2$ and if $P_2$ is minimal among subwords $P\supset P_1$ such that $\dem(P)\gtrdot \pi$.

\end{corollary}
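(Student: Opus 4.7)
My plan is to deduce Corollary~\ref{new_face} as an almost immediate combination of Theorem~\ref{max_face} and Proposition~\ref{New_Facets}. The former translates the subword-theoretic hypothesis on $P_2$ into the geometric statement that $\tau_{Q\setminus P_2}$ is a maximal face of $\sigma_{Q\setminus P_1}\cap\partial\Delta(Q,\pi)$, and the latter asserts that such a maximal face, once it is known to lie in the appropriate subcomplex $\tau_i$, corresponds to a new facet of $\partial\widetilde{\Delta(Q,\pi)}$. The whole argument is a translation between the subword combinatorics and the polytopal picture developed in Section~\ref{SR_Exceptional}.

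First I would apply Theorem~\ref{max_face} directly: the minimality of $P_2$ among subwords $P\supset P_1$ with $\dem(P)\gtrdot\pi$ is exactly the hypothesis of that theorem, so we obtain that $\tau_{Q\setminus P_2}$ is a maximal face of $\sigma_{Q\setminus P_1}\cap\partial\Delta(Q,\pi)$. This converts the purely combinatorial input into the geometric form required by Proposition~\ref{New_Facets}.

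Next I would use the codimension hypothesis $|P_2\setminus P_1|\geq 2$ to place $\tau_{Q\setminus P_2}$ inside the subcomplex $\tau_i$ of Proposition~\ref{New_Facets}, where, with $\sigma_i=\sigma_{Q\setminus P_1}$, the complex $\tau_i$ is obtained from $\sigma_i\cap\partial\Delta(Q,\pi)$ by discarding those codimension-one faces of $\sigma_i$ which happen to lie on $\partial\Delta(Q,\pi)$. Since the codimension of $\tau_{Q\setminus P_2}$ inside $\sigma_{Q\setminus P_1}$ equals $|P_2\setminus P_1|\geq 2$, the face $\tau_{Q\setminus P_2}$ is not among the discarded facets and therefore belongs to $\tau_i$. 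Because maximality is measured only against faces of $\tau_i$, and only strictly codimension-one pieces have been thrown away, maximality in the larger complex $\sigma_{Q\setminus P_1}\cap\partial\Delta(Q,\pi)$ is preserved when passing to $\tau_i$. Proposition~\ref{New_Facets} then produces the desired new facet of $\partial\widetilde{\Delta(Q,\pi)}$ corresponding to $\tau_{Q\setminus P_2}$.

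The only subtlety worth flagging is a bookkeeping one: reconciling the two senses of ``maximal'' that appear in the argument, namely maximal among codimension $\geq 2$ faces of $\sigma_{Q\setminus P_1}\cap\partial\Delta(Q,\pi)$ versus maximal in $\tau_i$. The codimension-$>1$ hypothesis is precisely what ensures the two notions agree here, so no deeper obstacle arises; the corollary is really a repackaging of Proposition~\ref{New_Facets} in subword-complex language, with Theorem~\ref{max_face} providing the dictionary.
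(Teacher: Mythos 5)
Your proposal is correct and matches the paper's (implicit) argument: the corollary is stated without a separate proof precisely because it is the combination of Theorem \ref{max_face} with Proposition \ref{New_Facets} that you describe, with the codimension $>1$ hypothesis ruling out the case of a boundary facet of $\sigma_{Q\setminus P_1}$ (which would only give an old facet). Your handling of the one subtle point is also right: maximality in $\sigma_{Q\setminus P_1}\cap\partial\Delta(Q,\pi)$ guarantees the face is not contained in the closure of any discarded boundary facet of $\sigma_{Q\setminus P_1}$, so it lies in $\tau_i$ and remains maximal there, as Proposition \ref{New_Facets} requires.
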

Let $P_1\supset P_2$ be two lists of simple reflections (as in the definition of a subword complex). We will say that $P_1$ has \textbf{codimension $k$} in $P_2$ if $P_1\setminus P_2$ consists of $k$ simple reflections (so that the face $Q\setminus P_1$ in $\Delta(Q,w)$ has codimension $k$ in $Q\setminus P_2$).

We know that the remaining list of faces from Proposition \ref{New_Facets} will come from intersections of maximal faces, and that these simplicial faces correspond to subwords containing the minimal ones described in Theorem \ref{max_face}. While this is not a convenient way to describe simplicial faces, our current description is enough to understand when a Kazhdan-Lusztig variety is not Gorenstein.

\begin{corollary}\label{KL_Gorenstein}
Consider the Kazhdan-Lusztig variety $X_w\cap X^v_{\circ}$ and let $Q$ be a reduced word for $v$ written as an ordered list of simple reflections.  Then $X_w\cap X^v_{\circ}$ is not Gorenstein if and only if there is a reduced subword $P_w\subset Q$ for $w$ and a codimension $>1$ subword $P\supset P_w$ that minimally satisfies $\dem(P)\gtrdot w$.
\end{corollary}
\begin{proof}
Since simplicial faces require that there exist maximal faces with codimension $>1$, it is necessary and sufficient to apply Corollary \ref{new_face} to the subword complex $\Delta(Q,w)$.
\end{proof}

\begin{example}\normalfont
Consider the Kazhdan-Lusztig variety $X_{31524}\cap X_{\circ}^{54321}$ which we know is not Gorenstein by \cite{Woo-Yong}. We will take $Q=(1,2,1,3,2,1,4,3,2,1)$ to be our reduced word for the permutation $54321$. Alternatively, we could show that $X_{31524}\cap X_{\circ}^{54321}$ is not Gorenstein using Corollary \ref{KL_Gorenstein}. One possible reduced word for $31524$ is $s_2s_1s_4s_3$, so we can choose $P_w = -2---143--$ as a reduced subword of $Q$ that represents the permutation $31524$ . This corresponds to the facet $Q\setminus P_w$ of $\Delta(Q,31524)$. 

We can check that it contains the codimension 1 interior faces $P_1=-21--143--$, $P_2=-2--2143--$ and $P_3=-2---143-1$. The pairwise intersection of these interior faces results in three codimension 2 faces of interest, namely 
\[P_{1,2}= -21-2143-- \text{ , } P_{1,3}= -21--143-1 \text{ and } P_{2,3}=-2--2143-1.\]
 The latter two are interior, while the face $P=-21-2143--$ has Demazure product which covers the permutation $31524$. By Corollary \ref{KL_Gorenstein}, $X_{31524}\cap X_{\circ}^{54321}$ is not Gorenstein.
\end{example}
\begin{example}
\normalfont
As another example, consider $X_{134526}\cap X_{\circ}^{345612}$. We will take $Q=(2,1,3,2,4,3,5,4)$. The only reduced word for $134526$ is $s_2s_3s_4$. There are three facets $P_1= 2-3-4---$, $P_2= 2-3----4$ and $P_3 =2----3-4$ which represent $s_2s_3s_4$. There is only one codimension 1 interior face for $P_1$, so there is no obstruction to being Gorenstein coming from this facet. On the other hand, $P_2$ is contained in $2-3-4--4$ and $2-3--3-4$, both interior, which in turn are both contained in $2-3-43-4$. Similarly, $P_3$ is contained in $2-3--3-4$ and $2---43-4$ (again both interior) which are contained in $2-3-43-4$. Each codimension 2 subword is minimal with respect to the property of $\dem(P)\gtrdot 134526$, so  by Corollary \ref{KL_Gorenstein}, $X_{31524}\cap X_{\circ}^{54321}$ is not Gorenstein.
\end{example}

It is worth noting that this subword criteria for checking the Gorenstein property is less cumbersome than checking the homology criteria from \cite{Stanley} (although the homology criteria is more general and works for Stanley-Reisner complexes which are not necessarily subword complexes).

\section{Degenerating blow-up algebras}\label{degen-commute}

To understand under what conditions the degeneration of the Rees algebra is the Rees algebra of the degeneration, we utilize some well-known results involving Gr\"obner bases and syzygies. Given a finite set of $\{p_1,...,p_r\}\subset S=k[x_1,...,x_k]$, we define

$$\Syz(\{p_i\}) = \Big\{ \sum a_i\epsilon_i\in \langle \epsilon_1,...,\epsilon_r\rangle \leq S[\epsilon_1,...,\epsilon_r]: \sum (a_i\epsilon_i)\big|_{\epsilon_i=p_i} = 0 \Big\}.$$

When dealing with syzygies, we usually require the coefficients $a_i\in S$, which is why we talk about syzygy modules over $S$, viewing the relations as the kernel of a map $S^r\rightarrow S$. In our case it is easier to deal with ideals, so we allow the coefficients $a_i$ to be in $S[\epsilon_1,...,\epsilon_r]$. Note that $\Syz(\{p_i\}) $ is just the ideal generated by the first syzygy equations (see Lemma \ref{classic-syzygy}).

The first lemma rewrites the Rees algebras we are concerned with in a convenient form to utilize results about syzygies.

\begin{lemma}\label{fg-algebra}
Let $I=\langle g_1,...,g_m\rangle\subset J=\langle g_{1},...,g_{m+n}\rangle$ be two ideals in $S=k[x_1,...,x_k]$. Denote $S/I$ and $J/I$ by $\bar{S}$ and $\bar{J}$ respectively. Then

$$\bar{S}[t\bar{J}] \cong S[\epsilon_{1},...,\epsilon_{m+n}]/(I+\langle\epsilon_1,...,\epsilon_m\rangle+\Syz(\{g_i\}))$$
as graded rings with the $\epsilon_i$ generating in degree 1.
\end{lemma}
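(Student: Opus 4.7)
The plan is to construct the natural graded homomorphism
\[
\phi: S[\epsilon_1, \ldots, \epsilon_{m+n}] \longrightarrow \bar{S}[t\bar{J}], \qquad x_i \mapsto \bar{x}_i, \quad \epsilon_i \mapsto t\bar{g}_i,
\]
with each $\epsilon_i$ placed in degree $1$ to match the $t$-grading on the right, and then to identify $\ker\phi$ with $K := I + \langle \epsilon_1, \ldots, \epsilon_m\rangle + \Syz(\{g_i\})$. Surjectivity of $\phi$ is immediate: $\bar{S}[t\bar{J}]$ is generated over $\bar{S}$ in degree $1$ by the $t\bar{g}_i$, and the $\bar{g}_i$ for $i \le m$ vanish since $g_i \in I$.

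Checking $K \subseteq \ker\phi$ is routine. The map $\phi$ kills $I$ because $\bar{S} = S/I$; it sends each $\epsilon_i$ with $i \le m$ to $t\bar{g}_i = 0$; and for any generator $\sum a_i \epsilon_i$ of $\Syz(\{g_i\})$, which by definition satisfies $\sum a_i g_i = 0$ in $S$, one has $\phi(\sum a_i\epsilon_i) = t \sum \bar{a}_i \bar{g}_i = 0$.

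The substantive direction $\ker\phi \subseteq K$ I would approach by grading. Given $f \in \ker\phi$, write $f = \sum_d f_d$ with $f_d$ homogeneous of $\epsilon$-degree $d$; since $\phi$ is graded and the target splits as $\bigoplus_d t^d \bar{J}^d$, each $f_d$ lies in $\ker\phi$ individually. Modulo $\langle \epsilon_{\le m}\rangle$ we may assume $f_d$ uses only $\epsilon_{m+1},\ldots,\epsilon_{m+n}$, and the condition $\phi(f_d)=0$ translates to $f_d(x, g_{m+1}, \ldots, g_{m+n}) \in I$ in $S$. The case $d=0$ is tautological. For $d=1$, writing $f_1 = \sum_{i>m} c_i\epsilon_i$ together with a relation $\sum_{i>m} c_i g_i = \sum_{i\le m} d_i g_i$ in $S$ exhibits $f_1 - \sum_{i\le m} d_i\epsilon_i$ as an element of $\Syz(\{g_i\})$, and it agrees with $f_1$ modulo $\langle\epsilon_{\le m}\rangle$.

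The main obstacle is the inductive step for $d \ge 2$: one must show that every polynomial relation of degree $d$ among the $\bar{g}_i$ is a consequence of first syzygies. This is exactly the content of Lemma~\ref{classic-syzygy}, which lets one peel off a single $\epsilon$-variable using a first-syzygy move, rewrite the remainder as a degree-$(d-1)$ expression in $\ker\phi$, and then close the argument by the inductive hypothesis. Once this bookkeeping is set up, the three generators of $K$ suffice and the isomorphism follows.
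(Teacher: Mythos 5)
Your setup agrees with the paper's: the same evaluation map $\phi$, the same surjectivity remark, the same treatment of degrees $0$ and $1$ (the paper's proof is exactly your $d\le 1$ computation). The genuine gap is in how you propose to close degrees $d\ge 2$. You assert that ``every polynomial relation of degree $d$ among the $\bar g_i$ is a consequence of first syzygies'' and that this is ``exactly the content of Lemma~\ref{classic-syzygy}.'' It is not: that lemma is Buchberger/Schreyer and produces generators $\tau_{ij}$ of the module of \emph{first} (i.e.\ $\epsilon$-degree one) syzygies of a Gr\"obner basis; it says nothing about higher-degree relations among the $g_i$, which are the higher graded pieces of the Rees ideal. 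The statement you actually need --- that $\ker\phi$ is generated over $I+\langle\epsilon_1,\ldots,\epsilon_m\rangle$ by linear syzygies --- is the assertion that $\bar J$ is of linear type over $\bar S$, and this is false in general, even for Gr\"obner (indeed monomial) generators: for $I=0$ and $J=\langle x^2,xy,y^2\rangle\subset k[x,y]$ the element $\epsilon_1\epsilon_3-\epsilon_2^2$ lies in $\ker\phi$ but not in the ideal generated by the linear syzygies $y\epsilon_1-x\epsilon_2$, $y\epsilon_2-x\epsilon_3$ (set $x=y=0$). So no bookkeeping that ``peels off one $\epsilon$ by a first-syzygy move and inducts on degree'' can succeed from the cited ingredients; moreover Lemma~\ref{fg-algebra} carries no Gr\"obner hypothesis, so an appeal to Lemma~\ref{classic-syzygy} is out of scope at this point of the paper in any case.

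What the paper leans on instead is its (nonstandard) definition of $\Syz(\{g_i\})$: the coefficients $a_i$ are explicitly allowed to lie in $S[\epsilon_1,\ldots,\epsilon_{m+n}]$, so $\Syz(\{g_i\})$ is taken to contain every element of $\langle\epsilon_1,\ldots,\epsilon_{m+n}\rangle$ that vanishes under the substitution $\epsilon_i\mapsto g_i$, not merely the ideal generated by $S$-linear syzygies. Under that reading your degree-one manipulation extends verbatim to all degrees with no induction: if $f_d$ is homogeneous of degree $d\ge 1$ and $f_d(g)=\sum_{j\le m}h_jg_j\in I$, then $f_d-\sum_{j\le m}h_j\epsilon_j$ evaluates to zero and hence lies in $\Syz(\{g_i\})$ by definition, so $f_d\in\langle\epsilon_1,\ldots,\epsilon_m\rangle+\Syz(\{g_i\})$ --- this is what the paper means by ``extend to other degrees as needed.'' Be aware, though, that with this permissive definition the burden shifts to the inclusion you called routine: your verification only covers combinations $\sum a_i\epsilon_i$ with $a_i\in S$, whereas an element killed by $\epsilon_i\mapsto g_i$ need not be killed by $\epsilon_i\mapsto t\bar g_i$ (e.g.\ $(\epsilon_i-g_i)\epsilon_i$), so the two substitutions must not be conflated; only suitably graded relations pass. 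In short: your identification of the $d\ge2$ step as the crux is correct, but the fix you offer rests on a misreading of Lemma~\ref{classic-syzygy} and on a linear-type claim that fails in general, so as written the hard inclusion is not established.
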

\begin{proof}
Consider the map
$$\varphi: S[\epsilon_1,...,\epsilon_{m+n}]\rightarrow \bar{S}[t\bar{J}]$$
defined on generators by $\epsilon_i \rightarrow t\bar{g_i}$ (note that $tI\leq tJ$).  Let us compute $\ker(\varphi)$.

Given $f\in S[\epsilon_1,...,\epsilon_{m+n}]$, we can write $f= h_0 +...+h_k$ using the standard grading on $S[\epsilon_1,...,\epsilon_{m+n}]$, generated in degree 1 by the $\epsilon_i$. It then suffices to show the result for $f = f_{0}+\sum_{i=1}^{n}f_i\epsilon_i$ where $f_i\in S$ and extend to other degrees as needed. Now

$$\varphi(f) = \bar{f_{0}}+t\sum_{i=1}^{m+n}\bar{f_i}\bar{g_i}.$$

Then $\varphi(f)\in I$ iff $f_{0}\in I$ and $\sum_{i=m+1}^{n+m}f_ig_i \in I$ (the $g_i$ for $i=1,...,m$ are already in $I$).

Now $\sum_{i=m+1}^{m+n}f_ig_i\in I$ means that there exists $h_i\in S$ such that $\sum_{i=m+1}^{m+n}f_ig_i - \sum_{j=1}^{m}h_jg_j = 0$ in $S$. This relation is contained in $\Syz(\{g_i\})$. Then
$$f = f_{0}+\sum_{i=m+1}^{m+n}f_i\epsilon_i  - \sum_{j=1}^{m}h_j\epsilon_j + \sum_{j=1}^{m}(f_j+h_j)\epsilon_j \in I+\langle\epsilon_1,...,\epsilon_m\rangle+\Syz(\{g_i\}).$$

The reverse containment is easy to check.
\end{proof}

Writing the Rees algebra in this way allows us to work in $S$ instead of $S/I$ and to then use standard results about computing syzygies using Gr\"obner bases. To properly rewrite the Rees algebra in this form however, we needed to consider the syzygies on the generators of both $I$ and $J$ instead of just $J$ (which gives the extra relations on generators of $J$ that land in $I$). The phrasing of the next lemma in using both $I$ and $J$ would otherwise seem a little strange without viewing it in this context.

\begin{lemma}\label{classic-syzygy}
Using the notation of Lemma \ref{fg-algebra}, suppose that the generators for $I$ and $J$ are also Gr\"obner bases with respect to some monomial ordering on $S$. Define

$$\sigma_{ij} = m_{ji}\epsilon_i - m_{ij}\epsilon_j$$
$$\tau_{ij} = m_{ji}\epsilon_i - m_{ij}\epsilon_j - \sum_u f_u^{(ij)}\epsilon_u$$
where $m_{ij} = \init(g_i)/GCD(\init(g_i),\init(g_j))\in S$ and the $f_u^{(ij)}$ come from the division algorithm applied to $m_{ji}g_i - m_{ij}g_j$ . Then $\Syz(\{g_i\})$ is generated by the $\tau_{ij}$ and $\Syz(\{\init(g_i)\})$ is generated by the $\sigma_{ij}$.
\end{lemma}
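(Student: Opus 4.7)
The plan is to prove the two statements in parallel, starting by verifying that $\sigma_{ij}$ and $\tau_{ij}$ are actually syzygies, and then showing they generate. For the verification: by the definition of $m_{ij}$ and $m_{ji}$, we have $m_{ji}\init(g_i) = m_{ij}\init(g_j) = \mathrm{lcm}(\init(g_i),\init(g_j))$, so substituting $\epsilon_i \mapsto \init(g_i)$ in $\sigma_{ij}$ gives $0$. For $\tau_{ij}$, the expression $m_{ji}g_i - m_{ij}g_j$ is exactly the S-polynomial of $g_i$ and $g_j$. Since $\{g_i\}$ is a Gr\"obner basis, Buchberger's criterion guarantees that the division algorithm reduces this S-polynomial to zero, producing quotients $f_u^{(ij)}$ with $m_{ji}g_i - m_{ij}g_j = \sum_u f_u^{(ij)} g_u$, which is exactly the assertion that $\tau_{ij}$ vanishes after substitution $\epsilon_i \mapsto g_i$.

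For generation in the monomial case, I would argue by induction on the leading monomial of a putative syzygy. Take any $s = \sum a_i \epsilon_i \in \Syz(\{\init(g_i)\})$ and fix a term order on $S[\epsilon_1,\dots,\epsilon_{m+n}]$ refining the one on $S$ (say Schreyer's order). Because $\sum a_i \init(g_i) = 0$, the leading terms of $a_i \init(g_i)$ must cancel pairwise, which forces some pair $(i,j)$ such that $\init(a_i)\init(g_i) = \init(a_j)\init(g_j) = \mathrm{lcm}(\init(g_i),\init(g_j)) \cdot h$ for a monomial $h$. Subtracting an appropriate scalar multiple of $h\,\sigma_{ij}$ from $s$ strictly decreases the leading monomial, so iterating terminates and expresses $s$ as a combination of the $\sigma_{ij}$'s.

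For generation of $\Syz(\{g_i\})$ by the $\tau_{ij}$'s, I would invoke a Schreyer-style lift. Given a syzygy $s = \sum a_i \epsilon_i$ with $\sum a_i g_i = 0$, look at the leading terms of $a_i g_i$: the maximal such terms must cancel, so the initial forms $\init(a_i)\init(g_i)$ form a syzygy on the $\init(g_i)$'s. By the monomial case just proved, this initial syzygy lies in the submodule generated by the $\sigma_{ij}$. Lift this combination by replacing each $\sigma_{ij}$ with the corresponding $\tau_{ij}$, and subtract from $s$ to get a syzygy whose leading monomial is strictly smaller (with respect to the Schreyer order). Since the term order is a well-ordering, this process terminates and expresses $s$ as a combination of the $\tau_{ij}$, completing the proof.

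The main obstacle will be the bookkeeping in the termination/descent argument for the $\tau_{ij}$ case: one must verify that the Schreyer order on the free module makes the leading term of $s - (\text{lift of monomial syzygy})$ genuinely smaller, which requires that the lifted combination cancels the top term without introducing a larger one. This is standard in the Gr\"obner basis literature (and is essentially Schreyer's theorem), so in the write-up I would cite this well-known consequence of Buchberger's criterion rather than reproduce the full descent.
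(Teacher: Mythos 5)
Your proposal is correct and follows essentially the same route as the paper: the paper's proof simply notes that $\{g_1,\dots,g_{m+n}\}$ is a Gr\"obner basis for $J=I+J$ and cites Lemma 15.1, Buchberger's criterion, and Schreyer's theorem from Eisenbud, which are exactly the facts (monomial syzygies generated by the $\sigma_{ij}$, S-polynomial reduction producing the $f_u^{(ij)}$, and the Schreyer lift with descent) that you sketch and then propose to cite.
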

\begin{proof}
Note that since $I\subset J, \{g_1,...,g_m,g_{m+1},...,g_{m+n}\}$ is a Gr\"obner basis for $J = I+J$. The result follows from Lemma 15.1, Theorem 15.8 (Buchberger's Criterion), and Theorem 15.10 (Schreyer's theorem) in \cite{Eisenbud}.
\end{proof}

We now define the crucial ordering on the extra variables $\epsilon_i$  to introduce a meaningful degeneration of the blow-up algebra.

\begin{lemma}\label{syzygy-weight}
Let $>_{\lambda}$ be a term order on $S=k[x_1,...,x_k]$ defined by an integral weight function $\lambda: \mathbb{Z}^k\rightarrow \mathbb{Z}$. Using the notation of Lemma \ref{classic-syzygy}, consider the integral weighting $\tilde{\lambda}$ defined on generators by

$$
\tilde{\lambda}(z) = \left\{
        \begin{array}{ll}
            \lambda(\init_{>_{\lambda}}(g_i)) & \text{if } z=\epsilon_i,\\
            \lambda(x_j) & \text{if } z=x_j
        \end{array}
    \right.
$$

Then, $\{g_1,...,g_m,\epsilon_1,...,\epsilon_m\}\cup\{\tau_{ij}\}$ is a Gr\"obner basis with respect to $<_{\tilde{\lambda}}$ for $I+\langle\epsilon_1,...,\epsilon_m\rangle+\Syz(\{g_i\})$ and its initial ideal is
$$\init_{>_{\lambda}}(I)+ \langle\epsilon_1,...,\epsilon_m\rangle+\Syz(\{\init_{>_{\lambda}}(g_i)\}).$$
\end{lemma}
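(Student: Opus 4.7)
The plan is to prove both assertions by first computing the $\tilde{\lambda}$-initial forms of the proposed generators and then verifying Buchberger's criterion.

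First I would pin down the initial forms. Since $\tilde{\lambda}|_S = \lambda$, we have $\init_{>_{\tilde{\lambda}}}(g_i) = \init_{>_{\lambda}}(g_i)$ and $\init_{>_{\tilde{\lambda}}}(\epsilon_i) = \epsilon_i$ immediately. The crux is $\init_{>_{\tilde{\lambda}}}(\tau_{ij}) = \sigma_{ij}$: by construction of $m_{ij}, m_{ji}$ and of $\tilde{\lambda}$ on the $\epsilon$'s, both $m_{ji}\epsilon_i$ and $m_{ij}\epsilon_j$ carry $\tilde{\lambda}$-weight equal to $\lambda(\mathrm{lcm}(\init_{>_{\lambda}}(g_i), \init_{>_{\lambda}}(g_j)))$. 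The tail terms $f_u^{(ij)}\epsilon_u$ come from the standard representation of the S-polynomial $m_{ji}g_i - m_{ij}g_j$ with respect to the Gr\"obner basis $\{g_1,\ldots,g_{m+n}\}$ of $J$; since leading terms cancel in the S-polynomial, each $\lambda(\init_{>_{\lambda}}(f_u^{(ij)} g_u)) < \lambda(\mathrm{lcm}(\init_{>_{\lambda}}(g_i), \init_{>_{\lambda}}(g_j)))$. Hence $\tilde{\lambda}(f_u^{(ij)}\epsilon_u) < \tilde{\lambda}(m_{ji}\epsilon_i)$, establishing $\init_{>_{\tilde{\lambda}}}(\tau_{ij}) = \sigma_{ij}$. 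Lemma \ref{classic-syzygy} then says that the $\sigma_{ij}$ generate $\Syz(\{\init_{>_{\lambda}}(g_i)\})$, so the ideal generated by these initial forms is exactly $\init_{>_{\lambda}}(I) + \langle\epsilon_1,\ldots,\epsilon_m\rangle + \Syz(\{\init_{>_{\lambda}}(g_i)\})$.

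Next I would verify Buchberger's criterion by checking that every S-pair among the proposed generators reduces to zero. The pairs fall into four types: (i) $S(g_i, g_j)$ with $i,j \le m$ reduce via the Gr\"obner basis of $I$; (ii) S-pairs of an $\epsilon_i$ with a $g_j$ have coprime leading monomials and vanish; (iii) S-pairs $S(\epsilon_i, \tau_{ij})$ and $S(g_i, \tau_{jk})$ produce polynomials whose terms all carry $\tilde{\lambda}$-weight strictly below the pair's lcm and which lie in $\langle\epsilon_1,\ldots,\epsilon_m\rangle + I\cdot\langle\epsilon_1,\ldots,\epsilon_{m+n}\rangle$, reducible stepwise through the remaining $\epsilon_\ell$'s ($\ell \le m$), the $g_\ell$'s, and further $\tau$'s; (iv) the critical pairs $S(\tau_{ij}, \tau_{k\ell})$. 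I expect case (iv) to be the principal obstacle. My strategy is a Schreyer-type argument: after choosing a compatible refinement of $\tilde{\lambda}$ (e.g., the Schreyer order, breaking ties among equal-weight $\epsilon$-terms by index), the $\tau_{ij}$ are precisely the standard lifts produced by Schreyer's theorem (Theorem 15.10 of \cite{Eisenbud}, already invoked in Lemma \ref{classic-syzygy}), which guarantees that they form a Gr\"obner basis of $\Syz(\{g_i\})$ with leading forms exactly $\sigma_{ij}$. Combined with cases (i)--(iii), this discharges Buchberger's criterion and the lemma follows.
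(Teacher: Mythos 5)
Your opening step coincides with the paper's: the identity $\init_{>_{\tilde{\lambda}}}(\tau_{ij})=\sigma_{ij}$, proved by noting that both $m_{ji}\epsilon_i$ and $m_{ij}\epsilon_j$ have weight $\lambda(\mathrm{lcm}(\init_{>_{\lambda}}(g_i),\init_{>_{\lambda}}(g_j)))$ while the division-algorithm tails are strictly lower, is exactly how the paper begins. After that the routes differ: the paper never runs Buchberger's criterion. It argues directly with weight-initial forms --- first showing that the top-weight part of any syzygy of the $g_i$ is a syzygy of the $\init_{>_{\lambda}}(g_i)$ and hence lies in $\langle\sigma_{ij}\rangle$, and then handling the interaction of the three summands $I$, $\langle\epsilon_1,\dots,\epsilon_m\rangle$, $\Syz(\{g_i\})$ by a term-by-term analysis of $\init(f)$, split according to whether the $\epsilon$-indices involved are $\leq m$, with the key input that uniqueness of division by the Gr\"obner basis of $I$ forces the tails $f_u^{(ij)}$ into $I$ when $i,j\leq m$. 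Your plan (refine $\tilde{\lambda}$ to a term order, verify Buchberger, then pass back to weight-initial forms) is a legitimate alternative architecture, but as written it leaves unproved precisely the content that the paper's second half supplies.

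Concretely, your case (iii) is asserted rather than argued. Take $i\leq m$, $k>m$, and suppose the refined lead term of $\tau_{ik}$ is $m_{ki}\epsilon_i$; then $S(\epsilon_i,\tau_{ik})=m_{ik}\epsilon_k+\sum_u f_u^{(ik)}\epsilon_u$, and after the terms with $\epsilon_u$, $u\leq m$, are reduced away you are left with $m_{ik}\epsilon_k+\sum_{u>m}f_u^{(ik)}\epsilon_u$, whose coefficients need not lie in $I$. What is true is only that its evaluation at $\epsilon_u\mapsto g_u$ equals $m_{ki}g_i-\sum_{u\leq m}f_u^{(ik)}g_u\in I$, so the claimed membership in $\langle\epsilon_1,\dots,\epsilon_m\rangle+I\cdot\langle\epsilon_1,\dots,\epsilon_{m+n}\rangle$ is unjustified; converting ``evaluates into $I$'' into a standard representation with the lead-term control Buchberger requires is the nontrivial step (this is where the paper invokes division-algorithm uniqueness). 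Second, Schreyer's theorem does not fully discharge case (iv): it is a statement about the syzygy \emph{module}, so it only controls S-pairs of $\tau_{ij},\tau_{ik}$ whose lead terms involve the same $\epsilon$-variable. In the ring $S[\epsilon_1,\dots,\epsilon_{m+n}]$ you must also treat S-pairs of $\tau_{ij},\tau_{kl}$ with lead terms in different $\epsilon$-variables and $\gcd(m_{ji},m_{lk})\neq 1$; these produce S-polynomials of degree two in the $\epsilon$'s about which Schreyer's theorem says nothing. (Relatedly, under the refined order the lead term of $\tau_{ij}$ is the single monomial $m_{ji}\epsilon_i$, not the binomial $\sigma_{ij}$; the binomial is its $\tilde{\lambda}$-initial form, and the final passage from a Gr\"obner basis for the refinement to generators of the $\tilde{\lambda}$-initial ideal should be stated explicitly.) Until the mixed pairs and the cross-variable $\tau$-pairs are actually reduced, your argument is incomplete at the very point where the lemma has content.
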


\begin{proof}
This is a variation of the proof in Schreyer's theorem (see Theorem 15.10 in \cite{Eisenbud}). Observe that

$$\init(\tau_{ij}) = m_{ji}\epsilon_i - m_{ij}\epsilon_j$$
since $m_{ji}\init(g_i) = m_{ij}\init(g_j)$ and because these terms are greater than any that appear in the $f_u^{(ij)}\epsilon_u$ as a result of the division algorithm.

First, we will show that the $\tau_{ij}$ don't just generate $\Syz(\{g_i\})$ but are a Gr\"obner basis for it. Let $\tau = \sum h_v\epsilon_v$ be a syzygy of $\{g_i\}$ (with terms that can be cancelled already eliminated). For each index $v$, set $n_v\epsilon_v = \init(h_v\epsilon_v)$. Let $q$ be the maximum weight of a term in $\tau$. Then let $\sigma = \sum n_v\epsilon_v$ such that $\tilde{\lambda}(n_v\epsilon_v) =q$. That is, $\init(\tau)=\sigma$.

Notice that since $\tau$ was a syzygy of the  $g_i$, $\sigma$ must be a syzygy of the $\init(g_i)$. Indeed $\tau$ gives rise to the relation

$$\sum_{\text{weight $q$}}n_v\epsilon_v + \sum_{u}a_u\epsilon_u =0$$
by separating the terms from the relation provided by $\sigma$ with the remaining terms. Then

$$\sum_{\text{weight $q$}}n_v\epsilon_v = -\sum_{u}a_u\epsilon_u.$$

If $\sum_{\text{weight $q$}}n_v\epsilon_v \neq 0$, then the left hand side would contain a term with weight $q$. The right hand side necessarily has weight strictly less than $q$, a contradiction.

Thus $\sigma$ is a syzygy of the $\init(g_i)$ and therefore is in the ideal generated by the $\sigma_{ij}$ by Lemma \ref{classic-syzygy}. Since $\init(\tau_{ij}) = \sigma_{ij}$, we are done.\\

Next, let $f\in I+\langle\epsilon_1,...,\epsilon_m\rangle+\Syz(\{g_i\})$. The worry at this point should be that cancellation results in an $\init(f)$ not in our target ideal. For example, perhaps some element in $I+\langle\epsilon_1,...,\epsilon_m\rangle$ can be used to eliminate the $m_{ji}\epsilon_i - m_{ij}\epsilon_j$ in some $\tau_{ij}$ leaving only $\sum_u f_u^{(ij)}\epsilon_u$ terms not found in $\Syz(\{\init (g_i)\})$.

Let $m$ be a term in $\init(f)$. Clearly if $m$ was a leading term for an element in $I+ \langle\epsilon_1,...,\epsilon_m\rangle$, then $m\in \init(I)+ \langle\epsilon_1,...,\epsilon_m\rangle$. Otherwise, $m$ is a leading term of some combination involving a $\tau_{ij}$, and there are 2 cases to consider. Fix $(i,j) = (i_0,j_0)$.

If $m$ is a multiple of a term in $\tau_{i_0j_0}$ where at least one $\epsilon_{i_0}$ and $\epsilon_{j_0}$ are in $\langle\epsilon_{m+1},...,\epsilon_{m+n}\rangle$, we are done since no cancellation is possible to eliminate these leading terms, so $m\in \init(I)+ \langle\epsilon_1,...,\epsilon_m\rangle+\Syz(\{\init(g_i)\})$

If both $\epsilon_{i_0}$ and $\epsilon_{j_0}$ are in $\langle\epsilon_1,...,\epsilon_m\rangle$, then the division algorithm applied to $(m_{j_0i_0}\epsilon_{i_0} - m_{i_0j_0}\epsilon_{j_0})|_{\epsilon_i=g_i} \in I$ would have produced $f_u^{(i_0j_0)}$ also contained in $I$ since the $g_1,...,g_m$ are a Gr\"obner basis for $I$, and such a decomposition for ideal containment is unique. Then $m\in \init(I)+ \langle\epsilon_1,...,\epsilon_m\rangle$, completing the proof.

\end{proof}

\begin{theorem}\label{commute-weight}
Let $>_{\lambda}$ be a term order on $S=k[x_1,...,x_k]$ defined by an integral weight function $\lambda: \mathbb{Z}^k\rightarrow \mathbb{Z}$. Let $I=\langle g_1,...,g_m\rangle\subset J=\langle g_{1},...,g_{m+n}\rangle$ be two ideals in $S$ each generated by a Gr\"obner basis (with respect to $\lambda$). Then there exists an integral weight function $\tilde{\lambda}:\mathbb{Z}^{n+m+k}\rightarrow \mathbb{Z}$ defining a term order $>_{\tilde{\lambda}}$ on the blow-up algebra of $S/I$ along $J/I$ which Gr\"obner degenerates to the blow-up algebra of $ S/\init_{>_{\lambda}}(I)$ along $\init_{>_{\lambda}}(J)/\init_{>_{\lambda}}(I)$.

\end{theorem}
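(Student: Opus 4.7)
The plan is to assemble the theorem directly from the two preceding lemmas: Lemma \ref{fg-algebra} rewrites the blow-up algebra in a form where Gr\"obner methods apply, and Lemma \ref{syzygy-weight} is the technical engine comparing initial ideals. Almost all the work has been done; the theorem is essentially the right packaging.

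First I would apply Lemma \ref{fg-algebra} to the inclusion $I \subset J$ with the given Gr\"obner bases to obtain
\[
(S/I)[t\cdot J/I] \;\cong\; S[\epsilon_1,\dots,\epsilon_{m+n}]\big/\bigl(I + \langle\epsilon_1,\dots,\epsilon_m\rangle + \Syz(\{g_i\})\bigr).
\]
Since $\{g_1,\dots,g_m\}$ and $\{g_1,\dots,g_{m+n}\}$ are Gr\"obner bases, their leading terms $\init_{>_\lambda}(g_1),\dots,\init_{>_\lambda}(g_m)$ generate $\init_{>_\lambda}(I)$ (tautologically a Gr\"obner basis of the monomial ideal they generate), and similarly $\init_{>_\lambda}(g_1),\dots,\init_{>_\lambda}(g_{m+n})$ generate $\init_{>_\lambda}(J)$. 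So I may apply Lemma \ref{fg-algebra} a second time to the inclusion $\init(I)\subset\init(J)$, yielding
\[
(S/\init(I))[t\cdot \init(J)/\init(I)] \;\cong\; S[\epsilon_1,\dots,\epsilon_{m+n}]\big/\bigl(\init(I)+\langle\epsilon_1,\dots,\epsilon_m\rangle + \Syz(\{\init(g_i)\})\bigr).
\]

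Next I would take $\tilde{\lambda}:\mathbb{Z}^{k+m+n}\to\mathbb{Z}$ defined exactly as in Lemma \ref{syzygy-weight}, namely $\tilde{\lambda}(x_j)=\lambda(x_j)$ and $\tilde{\lambda}(\epsilon_i)=\lambda(\init_{>_\lambda}(g_i))$, and refine it to a term order on $S[\epsilon_1,\dots,\epsilon_{m+n}]$ by any tie-breaker consistent with $>_\lambda$ on $S$. Lemma \ref{syzygy-weight} then says precisely that
\[
\init_{>_{\tilde{\lambda}}}\bigl(I+\langle\epsilon_1,\dots,\epsilon_m\rangle + \Syz(\{g_i\})\bigr) \;=\; \init(I)+\langle\epsilon_1,\dots,\epsilon_m\rangle + \Syz(\{\init(g_i)\}),
\]
which is exactly the ideal presenting the Rees algebra of the degeneration. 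Passing to the standard Gr\"obner-degeneration family associated to an integral weight (the Rees-type $k[t]$-subalgebra whose generic fibre is the original quotient and whose fibre at $t=0$ is the quotient by the initial ideal) produces the required flat family of blow-up algebras.

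The main obstacle I anticipate is bookkeeping around the grading rather than any deep content: the $\epsilon_i$ carry degree $1$ and this grading encodes the Rees/blow-up structure, so one must check that the Gr\"obner degeneration given by $>_{\tilde{\lambda}}$ respects it. This is handled by noting that every element of the Gr\"obner basis $\{g_i,\epsilon_j,\tau_{ij}\}$ from Lemma \ref{syzygy-weight} is homogeneous in the $\epsilon$-grading (the $g_i$ in degree $0$, the $\epsilon_j$ in degree $1$, and each $\tau_{ij}$ in degree $1$), so the numerator ideal is $\epsilon$-homogeneous and the degeneration proceeds graded-piece by graded-piece. One must also verify that refining $\tilde{\lambda}$ to an actual term order does not spoil the initial ideal computation, but this is automatic because Lemma \ref{syzygy-weight} already identifies the $\tilde{\lambda}$-leading terms, and any refinement that breaks ties within equal $\tilde{\lambda}$-weight classes preserves them.
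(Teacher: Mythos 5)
Your proposal is correct and follows essentially the same route as the paper: apply Lemma \ref{fg-algebra} to present both blow-up algebras, then use the weight $\tilde{\lambda}$ and the initial-ideal identification of Lemma \ref{syzygy-weight} to see that the Gr\"obner degeneration of the first presentation is exactly the second. Your added remarks about $\epsilon$-homogeneity of the Gr\"obner basis and tie-breaking refinements are sound extra bookkeeping but not a different argument.
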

\begin{proof}
By Lemma \ref{fg-algebra}, the blow-up algebra $\bar{S}[t\bar{J}]$ is isomorphic to

$$S[\epsilon_{1},...,\epsilon_{m+n}]/(I+\langle\epsilon_1,...,\epsilon_m\rangle+\Syz(\{g_i\}))$$
We know that this degenerates to

$$S[\epsilon_{1},...,\epsilon_{m+n}]/(\init_{>_{\lambda}}(I)+\langle\epsilon_1,...,\epsilon_m\rangle+\Syz(\{\init_{>_{\lambda}}(g_i)\}))$$
using the integral weighting $\tilde{\lambda}$ defined in Lemma \ref{syzygy-weight}.

On the other hand, $S/I$ and $J/I$ degenerate to $S/\init_{>_{\lambda}}(I)$ and $\init_{>_{\lambda}}(J)/\init_{>_{\lambda}}(I)$. By Lemma \ref{fg-algebra}, the blow-up algebra of $S/\init_{>_{\lambda}}(I)$ along $\init_{>_{\lambda}}(J)/\init_{>_{\lambda}}(I)$ is

$$S[\epsilon_{1},...,\epsilon_{m+n}]/(\init_{>_{\lambda}}(I)+\langle\epsilon_1,...,\epsilon_m\rangle+\Syz(\{\init_{>_{\lambda}}(g_i)\}))$$
as required.

\end{proof}

\begin{corollary}\label{commute}
The blow-up of a Kazhdan-Lusztig variety along its boundary commutes with the degeneration in Section \ref{toric} using the integral weight function $\tilde{\lambda}$ in Theorem \ref{commute-weight}.
\end{corollary}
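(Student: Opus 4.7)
The plan is to apply Theorem \ref{commute-weight} with $I = I_{w,v}$ and $J = \partial I_{w,v} = \bigcap_{w' \gtrdot w} I_{w',v}$, taking $\lambda$ to be the weight that realizes the degeneration of Section \ref{toric}. The containment $I \subset J$ is automatic, since $X^{w'} \subset X^w$ for each $w' \gtrdot w$ gives $I_{w,v} \subset I_{w',v}$ and hence $I_{w,v} \subset \bigcap I_{w',v}$. So the structural hypothesis of the theorem is in place as soon as we exhibit compatible Gr\"obner bases for $I$ and $J$.

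For the Gr\"obner bases, Corollary \ref{Knutson-corollary} supplies an explicit Gr\"obner basis $g_1,\dots,g_m$ of $I_{w,v}$ with respect to $\lambda$ whose initial ideal is the Stanley-Reisner ideal $I_{\Delta(Q,w)}$; choosing any Gr\"obner basis $g_{m+1},\dots,g_{m+n}$ of $\partial I_{w,v}$ under the same $\lambda$, the concatenation generates $J$ and already contains a Gr\"obner basis of $J$, so it is itself a Gr\"obner basis of $J$ extending that of $I$. Theorem \ref{commute-weight} now produces a weight $\tilde\lambda$ on the polynomial ring in the $x_i$ and the auxiliary $\epsilon_j$ under which the blow-up algebra of $S/I$ along $J/I$ Gr\"obner-degenerates to the blow-up algebra of $S/\init_\lambda(I)$ along $\init_\lambda(J)/\init_\lambda(I)$.

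What remains --- and what I expect to be the main obstacle --- is to identify $\init_\lambda(J)$ with the boundary Stanley-Reisner ideal $I_{\partial\Delta(Q,w)}$, so that the right-hand side of the degeneration matches the blow-up of the Stanley-Reisner scheme from Section \ref{toric}. The formal inclusion
\[
\init_\lambda\Bigl(\bigcap_{w' \gtrdot w} I_{w',v}\Bigr) \subset \bigcap_{w' \gtrdot w} \init_\lambda(I_{w',v}) = \bigcap_{w' \gtrdot w} I_{\Delta(Q,w')} = I_{\partial\Delta(Q,w)},
\]
where the last equality uses Theorem \ref{KnutsonMiller04} together with $I_{\Delta_1\cup\Delta_2} = I_{\Delta_1}\cap I_{\Delta_2}$, is easy, but the reverse inclusion fails for general ideals and is the delicate point. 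I would close it by combining flatness of the Gr\"obner degeneration with reducedness of the limit --- the latter guaranteed by the compatibly split Frobenius splitting on $X^w\cap X_v^\circ$ that preserves $\partial X^w\cap X_v^\circ$ --- to force the two squarefree monomial ideals to have equal Hilbert functions and hence to coincide. Given this identification, Theorem \ref{commute-weight} yields the corollary with no further input.
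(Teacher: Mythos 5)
Your proposal is correct and is essentially the paper's own proof: the paper likewise simply applies Theorem \ref{commute-weight} with $I = I_{w,v}$, $J = \partial I_{w,v} = \bigcap_{w'\gtrdot w} I_{w',v}$, and $\lambda$ the weight realizing the lexicographic degeneration of Section \ref{toric}; your check that prepending the Gr\"obner basis of $I$ to one of $J$ yields a Gr\"obner basis of $J$ extending that of $I$ is exactly the hypothesis the paper leaves implicit. The one divergence is your third paragraph: the identification of $\init_{\lambda}(\partial I_{w,v})$ with the Stanley--Reisner ideal of $\partial\Delta(Q,w)$ is not something the paper proves inside this corollary --- it is imported from the background (Theorem \ref{KnutsonMiller04} and Corollary \ref{Knutson-corollary}, following \cite{Knut}, where the splitting-compatible degeneration sends every compatibly split subvariety, in particular the boundary, to the Stanley--Reisner scheme of the corresponding complex). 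Note also that your sketched closing of that point is not yet complete as stated: flatness gives equality of the Hilbert functions of $S/\partial I_{w,v}$ and $S/\init_{\lambda}(\partial I_{w,v})$, and the degenerated splitting makes the latter squarefree, but the required equality with the Hilbert function of the boundary Stanley--Reisner ring is precisely the assertion at stake; it is closed either by induction over the lattice of compatibly split subvarieties (pairwise intersections of boundary components are again unions of Kazhdan--Lusztig varieties) or by citing Knutson's result that taking initial ideals under such a degeneration commutes with intersections of compatibly split ideals.
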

\begin{proof}
Let $I$ be the generating ideal for the Kazhdan-Lusztig variety and $J$ the generating ideal for its boundary. Choose an integral weight function $\lambda$ that produces the lexicographic ordering for the equations in $I$ and $J$ (ie. the $\lambda$ needed for the degeneration in Section \ref{toric}). The result follows.
\end{proof}

Finally, the fact that we are blowing-up an entire divisor results in the possibility of the exceptional locus not being a divisor, which is usually not an issue when blowing-up higher codimension centers (as found in desingularization algorithms for example). Luckily the exceptional locus in our case actually is a divisor.

\begin{corollary}
The exceptional locus in the blow-up of a Kazhdan-Lusztig variety along its boundary is a divisor.
\end{corollary}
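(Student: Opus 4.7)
The plan is to use Corollary \ref{commute} to reduce the question to the Stanley-Reisner fiber, where the exceptional locus is visible combinatorially, and then transport the conclusion back via flatness.

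First, note that the scheme-theoretic exceptional divisor $E := \pi^{-1}(\partial X^w \cap X_v^\circ)$ is automatically a Cartier divisor on $\widetilde{X^w \cap X_v^\circ}$ by the universal property of blow-ups (the ideal sheaf of the center pulls back to an invertible ideal on $\text{Proj}$ of the Rees algebra). The content of the corollary is thus that the underlying set $|E|$ is a genuine divisor --- its irreducible components are all of codimension one, with no pathological lower-dimensional ``phantom'' components of the sort that can in principle appear when one blows up a non-Cartier Weil divisor on a singular scheme.

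Second, I would treat the Stanley-Reisner special fiber directly. By Corollary \ref{commute}, the blow-up of $X^w \cap X_v^\circ$ along $\partial X^w \cap X_v^\circ$ is the generic fiber of a flat family over $\mathbb{A}^1$ whose special fiber is the blow-up $\widetilde{Y}$ of the Stanley-Reisner scheme $Y$ of $\Delta(Q,w)$ along $\partial Y$. In this combinatorial setting, the analysis of Section \ref{SR_Exceptional} --- specifically Proposition \ref{New_Facets} combined with the polytopal description of Lemma \ref{BU} --- identifies every component of the exceptional locus of $\widetilde{Y}\to Y$ with a new facet (or simplicial face producing a facet) of the polytopal complex $\widetilde{\Delta}$. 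Since each such face is top-dimensional in $\widetilde{\Delta}$, the corresponding exceptional component is codimension one in $\widetilde{Y}$. Thus the exceptional locus on the special fiber is purely a divisor.

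Third, I would transport the conclusion to the generic fiber using flatness. Since the Rees algebra degeneration in Theorem \ref{commute-weight} is flat, and since the special fiber $\widetilde{Y}$ is Cohen-Macaulay and equidimensional by Corollary \ref{Knutson-corollary} (promoted through the blow-up via the combinatorial description), the irreducible components of $\widetilde{X^w \cap X_v^\circ}$ and of its exceptional divisor deform flatly without change in dimension. Combined with the pure-codimension-one description in the special fiber, this forces the exceptional locus on the generic fiber to be a divisor as well. The main technical subtlety is ensuring that the component correspondence across the flat family respects codimension; this follows from semicontinuity of fiber dimension applied to the explicit Gr\"obner basis furnished by Lemma \ref{syzygy-weight}, and is where I would concentrate the careful bookkeeping.
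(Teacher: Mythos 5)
Your proposal is correct and follows essentially the same route as the paper: degenerate the blow-up to the Stanley--Reisner blow-up via Corollary \ref{commute}, observe that the exceptional locus there is a divisor by the combinatorial analysis of Section \ref{SR_Exceptional}, and transfer the conclusion back through the flat family by semicontinuity of dimension. Your opening remark distinguishing the Cartier exceptional divisor from the set-theoretic exceptional locus is a helpful clarification but does not change the argument.
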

\begin{proof}
The blow-up of a Kazhdan-Lusztig variety along its boundary degenerates to the blow-up of a Stanley-Reisner scheme along its boundary, and the exceptional locus of the former degenerates to the exceptional locus of the latter. By Section \ref{SR_Exceptional}, the exceptional locus in the Stanley-Reisner case is a divisor, so the result follows by upper semi-continuity of dimension.
\end{proof}

\section{The blow-up of $X^w$ along $\partial X^w$}\label{semicontinuous}

We need to show that the general definition of Gorenstein presented in \ref{anticanonical_reducible} is an uppersemicontinuous property. This means that it is an open condition in flat families so that checking the Gorenstein property on the special fibre is sufficient for showing it on the general fibre. To achieve this, we need to check that the property of a sheaf being dualizing is open.

Let $X$ be a proper scheme of dimension $n$ over $k$. Recall that $\kappa$ is said to be a dualizing sheaf if:

\begin{itemize}
\item $H^n(X,\kappa) \cong k$\\
\item For all coherent sheaves $\mathcal{F}$, the following map is a perfect pairing:

$$H^i(X,\mathcal{F})\times H^{n-i}(X,\mathcal{F}^*\otimes \kappa)\rightarrow H^n(X,\kappa). $$
\end{itemize}

Suppose we are given a flat family $M$ over $\mathbb{A}^1$ (where $M$ is a proper scheme of dimension $n$ over $k$) such that $M_t$ Gr\"obner degenerates to $M_0$. Suppose we also have a coherent sheaf $\kappa_t$ over $M_t$ that degenerates to $\kappa_0$ over $M_0$. Then the pair $(M_t,\kappa_t)$ has been Gr\"obner degenerated to $(M_0,\kappa_0)$. We would like to know whether $\kappa_0$ being a dualizing sheaf implies that $\kappa_t$ is dualizing as well. 

The perfect pairing property from the definition is already an open condition since it can be rephrased as a map into the dual space being an isomorphism. The first condition however may not be open in general. In fact $H^n(M_t,\kappa_t)$ can jump in dimension on the special fibre. In particular, if $H^n(M_0,\kappa_0)\cong k$, then $H^n(M_t,\kappa_t)\cong k \text{ or } 0$. What is true however, is that 
$$H^{n-1}(M_0,\kappa_0)= 0\Rightarrow H^n(M_t,\kappa_t)\cong k  $$
Indeed, $H^{n-1}(M_0,\kappa_0)=0$ implies that no such jump in dimension occurred at the special fibre, therefore $H^n(M_t,\kappa_t)\cong k$ as required.

\begin{proposition}
Let $(M_t,\kappa_t)$ Gr\"obner degenerate to $(M_0,\kappa_0)$ as above. Suppose also that $H^{n-1}(M_0,\kappa_0)=0$. Then $\kappa_t$ is a dualizing sheaf if $\kappa_0$ is.
\end{proposition}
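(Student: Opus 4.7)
The plan is to verify the two defining properties of a dualizing sheaf for $\kappa_t$ in a Zariski neighborhood of $0 \in \mathbb{A}^1$ separately. The perfect pairing condition, as indicated in the lead-up, is the easier part: for a coherent sheaf $\mathcal{F}$ on the total space $M$ (flat over $\mathbb{A}^1$), the cup-product pairing is induced by a morphism between coherent sheaves on $\mathbb{A}^1$, and the perfect-pairing property at the fibre over $t$ is the statement that this morphism is an isomorphism at $t$. Since the isomorphism locus of a morphism of coherent sheaves is open, perfectness propagates from $t=0$ to nearby $t$.

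For the top cohomology condition, upper semicontinuity of $t \mapsto \dim_k H^n(M_t, \kappa_t)$, applied to the flat proper morphism $f\colon M \to \mathbb{A}^1$ with flat coherent sheaf $\kappa$, gives $\dim H^n(M_t, \kappa_t) \leq 1$ near $0$, so the value is $0$ or $1$. To rule out the drop to $0$, I would invoke Grothendieck's cohomology and base change theorem. The top base-change map $\phi^n(0)\colon R^n f_* \kappa \otimes k(0) \to H^n(M_0, \kappa_0)$ is surjective for dimensional reasons (since $R^{n+1} f_* \kappa = 0$), so $\phi^n$ is an isomorphism in a neighborhood of $0$. Next, $\phi^{n-1}(0)$ is trivially surjective because its target $H^{n-1}(M_0, \kappa_0)$ vanishes by hypothesis, so by the same theorem $R^n f_* \kappa$ is locally free near $0$. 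Its rank at $0$ equals $\dim H^n(M_0, \kappa_0) = 1$, so it is locally free of rank $1$, and we conclude $H^n(M_t, \kappa_t) \cong k$ for $t$ near $0$.

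The main obstacle is precisely this top cohomology step: semicontinuity alone permits a drop to $0$, so the hypothesis $H^{n-1}(M_0, \kappa_0)=0$ is essential to rigidify $R^n f_* \kappa$ via cohomology and base change. Once both properties are established, $\kappa_t$ satisfies the definition of a dualizing sheaf on $M_t$ for $t$ in a neighborhood of $0$.
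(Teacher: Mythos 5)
Your proposal is correct and follows essentially the same route as the paper: the perfect-pairing condition is handled as an open (isomorphism) condition over $\mathbb{A}^1$, and the top-cohomology condition is rescued from semicontinuity by the hypothesis $H^{n-1}(M_0,\kappa_0)=0$. Your explicit use of cohomology and base change ($\phi^n$ surjective in top degree, $\phi^{n-1}(0)$ surjective because its target vanishes, hence $R^n f_*\kappa$ locally free of rank one near $0$) simply supplies the standard justification behind the paper's assertion that the vanishing of $H^{n-1}(M_0,\kappa_0)$ rules out the dimension jump at the special fibre.
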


In the case that concerns us, we know that $\widetilde{X^w\cap X_v^{\circ}}$ Gr\"obner degenerates to $M_0 = \proj(R[t(\init J_{w,v})])$ (the total transform of a Stanley-Reisner scheme), extending the degeneration from Section \ref{toric}. In particular, the exceptional divisor $E$ in  $\widetilde{X^w\cap X_v^{\circ}}$ must degenerate to the boundary divisor $E_0=\partial M_0$ (which was the exceptional divisor in the toric case).

We know that $E_0$ is a Cartier anticanonical divisor (as defined in \ref{anticanonical_reducible}) which we can use to define our prospective $\kappa_0$. For each $M_t$ in the degeneration above, we have a reduced Cartier exceptional divisor $E_t$ such that $-E_t$ is also Cartier. We can then define $\kappa_t = \mathcal{O}(-E_t)$. Since $\kappa_t$ is coherent, we can give it a finite presentation, and take the Gr\"obner limit in the Quot scheme to define $\kappa_0$.

We need only check that it is dualizing and that $H^{n-1}(M_0,\kappa_0)=0$. In fact we can deduce these cohomology conditions on $M_0$ from its normalization. 

We know that $M_0$ is equidimensional and has normal components. We will first show that $\kappa_0$ is a dualizing sheaf when $M_0$ has two components whose intersection has codimension one and is also normal. Say $M_0 = X\cup Y$. Given a coherent sheaf $\mathcal{F}$ on $M_0$, we have the short exact sequence:

$$0\rightarrow \mathcal{F} \rightarrow \mathcal{F}|_X\oplus\mathcal{F}|_Y \rightarrow \mathcal{F}|_{X\cap Y} \rightarrow 0$$
which gives rise to the long exact sequence in sheaf cohomology:

$$...\rightarrow H^{i-1}(\mathcal{F}|_{X\cap Y})\rightarrow H^{i}(\mathcal{F}|_{X\cup Y})\rightarrow H^{i}(\mathcal{F}|_{X\sqcup Y})\rightarrow ....   $$

Let $\mathcal{F}=\kappa_0$. Since the normalization of $M_0$ is just $X\sqcup Y$ (which is normal), and since $X\cap Y$ has dimension $n-1$ and is also normal, we have that 
$$0=H^{n-2}(X\cap Y,\mathcal{F}|_{X\cap Y}) \rightarrow H^{n-1}(X\cup Y,\mathcal{F}|_{X\cup Y}) \rightarrow H^{n-1}(X\sqcup Y,\mathcal{F}|_{X\sqcup Y}) =0$$ 
forcing $ H^{n-1}(X\cup Y,\mathcal{F}|_{X\cup Y})=0$ as required.

The perfect pairing condition also follows from the normal cases $X\sqcup Y$ and $X\cap Y$. For general coherent sheaves $\mathcal{F}$, we have a natural map 
$$\pi_Z: H^{i}(Z,\mathcal{F}|_{Z}) \rightarrow H^{n-i}(Z,(\mathcal{F}^*\otimes \kappa_0)|_{Z})^*$$ which is an isomorphism for $Z = X\sqcup Y, X\cap Y$. Let $\mathcal{G} = \mathcal{F}^*\otimes \kappa_0.$ We can draw the commuting diagram:

\[\begin{tikzcd}[column sep=0.75em]
H^{i-1}(\mathcal{F}|_{X\sqcup Y}) \arrow{r}\arrow[d] & H^{i}(\mathcal{F}|_{X\cap Y}) \arrow[r]\arrow[d]& H^{i}(\mathcal{F}|_{X\cup Y}) \arrow[r]\arrow[d] & H^{i}(\mathcal{F}|_{X\sqcup Y}) \arrow[r]\arrow[d] & H^{i+1}(\mathcal{F}|_{X\cap Y}) \arrow[d]  \\
H^{n-i+1}(\mathcal{G}|_{X\sqcup Y})^* \arrow[r] & H^{n-i}(\mathcal{G}|_{X\cap Y})^* \arrow[r] & H^{n-i}(\mathcal{G}|_{X\cup Y})^* \arrow[r] & H^{n-i}(\mathcal{G}|_{X\sqcup Y})^* \arrow[r] & H^{n-i-1}(\mathcal{G}|_{X\cap Y})^* \\
\end{tikzcd}\]
By the five lemma, $\pi_{X\cup Y}$ will also be an isomorphism.

We can now prove the result for a general union using the inductive version of the degeneration of a Kazhdan-Lusztig variety $KL_1$ to the Stanley-Reisner scheme (see Section \ref{GVD}). Recall that $KL_1$ can be degenerated as

$$KL_1 \rightsquigarrow KL_2 \cup_{0 \times KL_3} (\mathbb{A}^1 \times KL_3)$$
as shown in \cite{Knut-patches}. 

We have proven the base case above. If we now assume that the result holds for $X=KL_2, Y=KL_3$, then the same argument shows that the result holds for $KL_1$ too.

In the blow-up along the boundary divisor, the intersection of components (corresponding to interior facets of the subword complex) satisfy the same conditions as the Stanley-Reisner case. Therefore the sheaf associated to the Cartier boundary divisor of the total transform is dualizing.

%A more general approach for using induction to prove that a given sheaf is dualizing is to use a shellability type of argument. That is given a reduced equidimensional scheme with components $X_1,...,X_n$, we check that each component $X_i$ has the desired property, and $X_i\cap (X_1\cup...\cup X_{i-1})$ is a codimension 1 reduced equidimensional subscheme with the desired property. Although we have omitted the details, we observe that the earlier argument works here too by simply taking $X=X_i$ and $Y = X_1\cup...\cup X_{i-1}$. 

We are now ready to state our main result:

\begin{theorem}
Let $I_{w,v}\leq k[z_{i,j}]$ be the defining ideal for Kazhdan-Lusztig variety $X^w\cap X^{\circ}_v$. The boundary divisor is defined by $J_{w,v} = \displaystyle\cap_{u\lessdot w} I_{u,v}$. Then the blow-up of $S = k[z_{ij}]/ I_{w,v}$ along $J_{w,v}$ is Gorenstein.
\end{theorem}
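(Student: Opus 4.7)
The plan is to assemble the pieces already developed in the paper: (a) the degeneration of a Kazhdan-Lusztig variety $X^w\cap X_v^\circ$ to the Stanley-Reisner scheme of a subword complex, (b) the fact that blowing up along the boundary commutes with this degeneration after a suitable choice of weights, (c) the Gorenstein property of the total transform in the toric/Stanley-Reisner case, and (d) the semicontinuity of the Gorenstein property in the generalized sense developed in Section~\ref{semicontinuous}.

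First I would fix a weight function $\lambda$ on $k[z_{ij}]$ realizing the Gröbner degeneration of $I_{w,v}$ to $\init_{>_\lambda}(I_{w,v}) = J_w$, the squarefree monomial ideal of the subword complex $\Delta(Q,w)$, as in Theorem~\ref{Knutson09}. The ideal $J_{w,v}=\bigcap_{u\lessdot w}I_{u,v}$ degenerates to $\partial J_w = \init_{>_\lambda}(J_{w,v})$ by Theorem~\ref{KnutsonMiller04}. Applying Theorem~\ref{commute-weight} (equivalently Corollary~\ref{commute}) to the inclusion $I_{w,v}\subset J_{w,v}$ and to the lifted weight $\tilde\lambda$ of Lemma~\ref{syzygy-weight}, the Rees algebra $S[tJ_{w,v}]$ admits a Gröbner degeneration to the Rees algebra of $k[z_{ij}]/J_w$ along $\partial J_w/J_w$. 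Thus the blow-up $\widetilde{X^w\cap X_v^\circ}$ flatly degenerates, compatibly with its exceptional divisor $E$, to the blow-up $M_0$ of the Stanley-Reisner scheme along its boundary.

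By Corollary~\ref{BU-gorenstein}, $M_0$ is Gorenstein; moreover, its boundary $E_0=\partial M_0$ is an anticanonical Cartier divisor in the sense of Definition~\ref{anticanonical_reducible}, so $\kappa_0=\mathcal{O}_{M_0}(-E_0)$ is a dualizing sheaf for $M_0$. The sheaves $\kappa_t=\mathcal{O}(-E_t)$ on the general fibres assemble into a coherent sheaf degenerating to $\kappa_0$, as explained in Section~\ref{semicontinuous}. To apply the semicontinuity proposition I need the vanishing $H^{n-1}(M_0,\kappa_0)=0$; this was verified in Section~\ref{semicontinuous} by the inductive Mayer-Vietoris argument on the geometric vertex decomposition, using the fact that the components of $M_0$ and of the intersections of components are normal and of the appropriate codimension (a consequence of Lemma~\ref{SR_standard} and the combinatorics of Section~\ref{SR_Exceptional}). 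Consequently $\kappa_t$ is dualizing on the general fibre, hence $\widetilde{X^w\cap X_v^\circ}$ is Gorenstein.

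The main obstacle is genuinely none of the above pieces in isolation but the compatibility between them: one must verify that the coherent sheaf $\mathcal{O}(-E_t)$ really does Gröbner degenerate to $\mathcal{O}(-E_0)$ inside the relevant Quot scheme, which requires that the exceptional divisor $E$ remain a Cartier divisor throughout the family. This is where Corollary~\ref{commute} is crucial, since it guarantees that $E$ is obtained from $E_0$ by Gröbner deformation rather than by any ad hoc modification, so the Cartier property of $E_0$ (from Proposition~\ref{SR-cartier}) propagates to nearby fibres. Combining these observations, the blow-up of $S=k[z_{ij}]/I_{w,v}$ along $J_{w,v}$ is Gorenstein, which, via Proposition~\ref{Gorenstein-local}, yields the global statement for $\widetilde{X^w}$.
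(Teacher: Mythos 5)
Your proposal is correct and follows essentially the same route as the paper: degenerate the pair $(I_{w,v},J_{w,v})$ to the Stanley-Reisner data of the subword complex (Theorems \ref{Knutson09} and \ref{KnutsonMiller04}), use Corollary \ref{commute} to degenerate the Rees algebra $S[tJ_{w,v}]$ to that of the toric blow-up, invoke Corollary \ref{BU-gorenstein} for the Gorenstein property of the special fibre, and conclude by the semicontinuity argument of Section \ref{semicontinuous} (with Proposition \ref{Gorenstein-local} giving the global statement). The additional remarks you make about $\mathcal{O}(-E_t)$ degenerating to $\mathcal{O}(-E_0)$ and the vanishing $H^{n-1}(M_0,\kappa_0)=0$ are exactly the points the paper itself addresses there, so no new gap or deviation arises.
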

\begin{proof}
By Theorem \ref{Knutson09} and Corollary \ref{Knutson-corollary}, there exists a degeneration of the pair $(S , J_{w,v})$ to $(R, \init (J_{w,v}))$ where $R = k[z_{ij}]/ \init (I_{w,v})$. Here $R$ is the coordinate ring for a Stanley-Reisner scheme. In fact $\init (I_{w,v})$ is the Stanley-Reisner ideal of a subword complex $\Delta(Q,w)$. Viewing $\spec(R)$ as a toric scheme, we can associate to it an affine semigroup in the character lattice which can be viewed as the cone on the polytope $\Delta(Q,w)$ (in appropriate coordinates). Here $\init (J_{w,v})$ corresponds to the boundary of this convex region.

By Corollary \ref{BU-gorenstein}, the blow-up of $R$ along $\init (J_{w,v})$ is Gorenstein. Furthermore, by Corollary \ref{commute}, there exists a degeneration of the blow-up algebra $S[tJ_{w,v}]$ to $R[t(\init J_{w,v})]$. We have the following diagram:

\[\displaystyle \large\begin{tikzcd}
S[tJ_{w,v}] \arrow[r, rightsquigarrow]
& R[t(\init J_{w,v})]  \\
(S , J_{w,v})\arrow[r, rightsquigarrow, "lexinit"]\arrow[u]
& (R, \init J_{w,v})\arrow[u]
\end{tikzcd}\]

Finally, we need to determine whether being Gorenstein (in the more general sense mentioned in Section \ref{toric}) is open in flat families. The beginning of Section \ref{semicontinuous} shows the semicontinuity of the Gorenstein property.

%so all we need to verify is that the anticanonical divisor of $\widetilde{X^w\cap X_v^{\circ}}$ degenerates to the anticanonical divisor of $\proj(R[t(\init J_{w,v})])$. Firstly, we notice that since the blow-up $X^w\cap X_v^{\circ}$ degenerates to a normal schemel, the total transform is also normal, so the definition of anticanonical divisor is clear. Since the blow-up is an isomorphism away from the center, the degeneration of the blow-ups away from the ramification locus is exactly the degeneration original degeneration of the Kazhdan-Lusztig to the Stanley-Reisner scheme. This means that the exceptional divisor degenerates to the exceptional divisor.

%The exceptional divisor for $R[t(\init J_{w,v})]$ is anticanonical by Lemma \ref{boundary-anticanonical}. The pushforward of the anticanonical divisor for $\widetilde{X^w\cap X_v^o}$ is the anticanonical divisor for $X^w\cap X_v^o$. Even more, since $X^w\cap X_v^o$ has rational singularities, the same statement holds for the anticanonical sheaves. On the other hand the pushforward of the exceptional divisor is also anticanonical, so we conclude that it is also anticanonical, completing the result.
\end{proof}

\begin{corollary}
The blow-up of $X^w$ along $\partial X^w$ is Gorenstein.
\end{corollary}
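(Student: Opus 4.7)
The plan is to reduce the global assertion to the local Gorensteinness of Kazhdan-Lusztig varieties just established, by invoking Proposition \ref{Gorenstein-local}. Since being Gorenstein is a local property and $X^w$ admits a finite open cover by neighborhoods of the $T$-fixed points $e_v$ for $v \leq w$, the problem naturally breaks up into checking the property on each of these patches.

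First I would cover $X^w$ by the affine opens $X^w \cap vX^\circ_{\id}$ for $v \leq w$, each an open neighborhood of $e_v$. By the Kazhdan-Lusztig Lemma, each such chart is isomorphic to the product $(X^w \cap X^\circ_v) \times \mathbb{A}^{l(v)}$, and by the subsequent lemma the boundary decomposes compatibly as $(\partial X^w \cap X^\circ_v) \times \mathbb{A}^{l(v)}$. Blowing up commutes with taking the product against affine space when the center itself is a product with that affine space, so the total transform on each chart is identified with $\widetilde{X^w \cap X^\circ_v} \times \mathbb{A}^{l(v)}$. Since Gorensteinness is preserved in both directions under products with $\mathbb{A}^n$, the question reduces to showing that $\widetilde{X^w \cap X^\circ_v}$ is Gorenstein for every $v \leq w$.

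At this point I would invoke the preceding theorem, which provides precisely this local statement for each Kazhdan-Lusztig variety. Since the Weyl group is finite, only finitely many $v \leq w$ need be checked, and Proposition \ref{Gorenstein-local} then assembles these local facts into the global conclusion that $\widetilde{X^w}$ is Gorenstein.

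The main theorem has absorbed essentially all of the substantive content of the argument: the degeneration of the blow-up algebra via the weight function $\tilde{\lambda}$ of Theorem \ref{commute-weight}, the Gorensteinness of the toric model via Corollary \ref{BU-gorenstein}, and the upper semicontinuity of the generalized dualizing property. Consequently there is no genuine obstacle at this final stage; the only real step is verifying that Proposition \ref{Gorenstein-local} applies, which it does because the Kazhdan-Lusztig product decomposition carries the boundary along with it.
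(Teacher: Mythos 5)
Your proposal is correct and follows the paper's own route: the paper's proof of this corollary is exactly the appeal to Proposition \ref{Gorenstein-local}, which uses the Kazhdan-Lusztig product decomposition (with the boundary carried along) to reduce to the Gorensteinness of $\widetilde{X^w\cap X_v^{\circ}}$ established in the preceding theorem. Your additional remarks about blow-ups commuting with the product with $\mathbb{A}^{l(v)}$ and Gorensteinness being stable under such products are precisely the content already packaged inside that proposition, so nothing further is needed.
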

\begin{proof}
See Proposition \ref{Gorenstein-local}.
\end{proof}

\begin{example}\label{BU-KL}\normalfont
Consider $X^{53241}\cap X_{12345}^{\circ}$ with
$$I_{w,v} = \langle z_{11}z_{22} - z_{12}z_{21}, z_{11}z_{23} - z_{13}z_{21}, z_{12}z_{23} - z_{13}z_{22}\rangle$$
as in Example \ref{degen-example}, defined by vanishing of all $2\times 2$ minors of
 \[ \left( \begin{array}{ccc}
 z_{21} & z_{22} & z_{23}  \\
 z_{11} & z_{12} & z_{13} \end{array} \right)\]

 The blow-up along the boundary satisfies the following equations (with $\epsilon_1,\epsilon_2$ in degree one):
 $$I = \langle z_{11}z_{22} - z_{12}z_{21}, z_{11}z_{23} - z_{13}z_{21}, z_{12}z_{23} - z_{13}z_{22},$$
 $$\epsilon_1z_{11} - \epsilon_2z_{21},\epsilon_1z_{12} - \epsilon_2z_{22},\epsilon_1z_{13} - \epsilon_2z_{23} \rangle$$
Those are the $2\times 2$ minors of

 \[ \left( \begin{array}{cccc}
 z_{21} & z_{22} & z_{23} & \epsilon_1 \\
 z_{11} & z_{12} & z_{13} & \epsilon_2\end{array} \right).\]

These equations look like the ones defining $X^{643251}\cap X_{123456}^{\circ}$. A quick check of the Gorenstein criteria for Schubert varieties shows that $X^{643251}$ is not Gorenstein, and indeed we need to account for $\epsilon_1$ and $\epsilon_2$ being in degree one. Therefore we take the GIT quotient $X^{643251}\cap X_{123456}^{\circ}$ $ //\mathbb{C}^{\times}$ where $\mathbb{C}^{\times}$ acts diagonally on $(\epsilon_1,\epsilon_2)$.

Now we should check that the non-Gorenstein locus is contained in the subvariety where $\epsilon_1=\epsilon_2=0$ (the subset quotiented out). For example $X^{643251}$ is not Gorenstein at $e_{432165}$, but can be defined by the vanishing of all of the coordinates in
\[ \left( \begin{array}{cccc}
 z_{21} & z_{22} & z_{23} & \epsilon_1 \\
 z_{11} & z_{12} & z_{13} & \epsilon_2\end{array} \right)\]
 and hence is not present in $X^{643251}\cap X_{123456}^{\circ}$ $ //\mathbb{C}^{\times}$.
 \qed
\end{example}

The other three $n=5$ examples in type A are also of this form. While we don't expect $\widetilde{X^w}$ to always be a GIT quotient of a Kazhdan-Lusztig variety, this example serves as a good transition to the next important question - how can we best interpret $\widetilde{X^w}$? Is it isomorphic to some well-known variety? Can we determine the exceptional locus of the blow-up map? Does $\widetilde{X^w}$ enjoy any other useful properties?

The first approach is to study $\widetilde{X^w}$ locally, through local equations for the blow-up algebra using syzygies. This is the content found in the remainder of this section. The second approach is a more global approach which deduces information about $\widetilde{X^w}$ using Bott-Samelson maps. This is the content of Section \ref{understanding-total-transform}.

Focusing on the local question for now, we recall that the boundary of a Kazhdan-Lusztig variety $X^w\cap X_v^{\circ}$ is defined by the ideal

$$ \bigcap_{u\lessdot w} I_{u,v}  $$

By the work of Bertiger in \cite{Bertiger}, we know that $\cap I_{u,v}$ is generated by certain products of determinants. Understanding the equations for the blow-up is the same as understanding the syzygies of this intersection.  What are the syzygies of $\cap I_{u,v}$? Are they determinantal?

We know that the equations in $I_{u,v}$ can be written as the sum of similar ideals where $u$ is \textbf{bigrassmanian}. Recall that $\pi$ is bigrassmannian if $\pi$ and $\pi^{-1}$ admit a unique descent, and this is the same as $X^{\pi}$ being defined using one rank condition. Can we solve this simpler problem of understanding the syzygies of $I_{u,v}$ if $u$ is bigrassmanian?

\subsection{Syzygies of products of ideals}
Let $I_1,...,I_r\subset R$ be ideals of a Noetherian integral domain $R$. Suppose that $I_k = \langle g_{k,1},...,g_{k,n_k}\rangle$ and consider the ring $S = R[z_{1,1},...,z_{r,n_r}]/\langle z_{i,j} - z_{k,l}\rangle $ with one $z_{i,j}$ for each generator of each ideal $I_k$ and with equal generators identified. We will denote the ideal of syzygies of $\{g_{k,i}\}_{i=1}^{n_k}$ by $\Syz(I_k)$. That is, $\Syz(I_k)\subset S$ will consist of all expressions of the form

$$ \sum_{i=1}^{n_k}a_iz_{k,i} \text{, where } a_i\in S \text{ such that }\sum_{i=1}^{n_k}(a_iz_{k,i})\displaystyle\big|_{z_{i,j} = g_{i,j}} =0$$

Similarly,
$$\Syz(\prod I_k)\subset R[\{z_{1,i_1}\cdot ... \cdot z_{r,i_r}\}]/\langle z_{i,j} - z_{k,l}\rangle \subset S$$
consists of the syzygies of the $N=\prod_{i=1}^r n_i$ products of the form $g_{1,1},...,g_{r,n_r}$.

We would like to understand $\Syz(\prod I_k)$ in terms of the $\Syz(I_k)$. Let $I = \{(i_1,...,i_r), 1\leq i_j\leq n_j\}$. An element $0\neq z \in \Syz(\prod I_k)$ has the form

$$ \sum_{(i_1,...,i_r)\in I}a_{(i_1,...,i_r)}z_{1,i_1}\cdot ...\cdot z_{r,i_r}$$

By viewing this as an equation written in the $\{z_{1,j}\}_{j=1}^{n_1}$, we see that either all the $z_{1,j}$ are equal so that we can factor them out, or at least 2 are distinct and $z\in \Syz(I_1)$. By next looking at the $\{z_{2,j}\}_{j=1}^{n_2}$, we see that either all of the $z_{2,j}$ are equal so that we can factor them out or $z\in \Syz(I_2)$.

Continuing in this way, we see that $z$ is contained in the intersection of some $\Syz(I_k)$ (it must be contained in at least one of these, otherwise all the $z_{1,i_1}\cdot ... \cdot z_{r,i_r}$ were equal and $z=0$).

Let us denote the ideal $\langle z_{k,1},...,z_{k,n_k}\rangle$ by $\bar{I_k}$. The above argument shows that

$$z\in \Syz(I_1)\cap ...\cap\Syz(I_r) + \bar{I_1}(\Syz(I_2)\cap ... \cap \Syz(I_r)) +  ... $$
$$+ \bar{I_r}(\Syz(I_1)\cap ... \cap \Syz(I_{r-1}))+...+ \bar{I_1}\cdot...\cdot \overline{I_{r-1}}\Syz(I_r)$$

Put in a more compact form, we have shown that

$$\Syz(\prod_{k=1}^rI_k) \subset\displaystyle \sum_{\emptyset\neq M\subset\{1,...,r\}} (\prod_{k\in M^c}\bar{I_k})(\bigcap_{k\in M} \Syz(I_k)) $$

For the reverse containment, observe that for $z\in \Syz(I_k)\cap\Syz(I_j)$ we can write $z$ in two ways

$$  a_1z_{k,1} + ... + a_{n_k}z_{k,n_k} = b_1z_{j,1} +...+b_{n_j}z_{j,n_j}  $$

We will assume that any cancellation on each side has already taken place. This means that each $a_m\in \bar{I_j}$ as a result. In particular both sides contain products of the form $z_{k,i_k}z_{j,i_j}$. This shows that $z\in \Syz(I_jI_k)$. Generalizing this we get the reverse containment, as required.

\begin{theorem}
$\Syz(\prod_{k=1}^rI_k) = \displaystyle \sum_{\emptyset\neq M\subset\{1,...,r\}} (\prod_{k\in M^c}\bar{I_k})(\bigcap_{k\in M} \Syz(I_k)).$
\end{theorem}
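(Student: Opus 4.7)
The plan is to formalize the two containments sketched in the discussion preceding the theorem. Both sides are to be viewed as subsets of $S$ (with $\Syz(\prod I_k)$ sitting inside the subring $R[\{z_{1,i_1}\cdots z_{r,i_r}\}]$), and equality will be checked by verifying each containment separately.

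For the forward containment $\Syz(\prod_k I_k)\subseteq \mathrm{RHS}$, my approach is the coefficient analysis already sketched. Writing $z = \sum_{(i_1,\ldots,i_r)} a_{(i_1,\ldots,i_r)}\, z_{1,i_1}\cdots z_{r,i_r}$ with $\sum a_{(\cdot)}\, g_{1,i_1}\cdots g_{r,i_r}=0$, I scan through the indices $k=1,\ldots,r$. For each $k$, either all multi-indices with nonzero coefficient share the same $i_k$ (so a common $z_{k,i_k}$ factor pulls out, placing $k$ in $M^c$), or several distinct $i_k$ occur (so, grouping by $z_{k,i_k}$, $z$ exhibits itself as an element of $\Syz(I_k)$, placing $k$ in $M$). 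Running this across all $k$ partitions $\{1,\ldots,r\}$ into $M\sqcup M^c$ and places $z$ in $(\prod_{k\in M^c}\bar{I_k})(\bigcap_{k\in M}\Syz(I_k))$. A clean implementation uses induction on $r$: the base case $r=1$ is trivial, and the inductive step peels off one index at a time via the above dichotomy, with the leftover syzygy of the truncated product handled by the inductive hypothesis.

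For the reverse containment $\mathrm{RHS}\subseteq \Syz(\prod_k I_k)$, I verify each summand directly. When $|M|=1$, say $M=\{k_0\}$, an element has the form $\bigl(\prod_{k\neq k_0} z_{k,j_k}\bigr)\cdot \sigma$ with $\sigma=\sum_i a_i z_{k_0,i}\in \Syz(I_{k_0})$; expanding yields a linear combination of $r$-fold products, and substituting $z\to g$ gives $\bigl(\prod_{k\neq k_0} g_{k,j_k}\bigr)\sum a_i g_{k_0,i}=0$. When $|M|\geq 2$, I invoke the paper's cancellation argument: an element $z\in\bigcap_{k\in M}\Syz(I_k)$ admits simultaneous expansions $z=\sum a_m z_{k,m}$ for each $k\in M$, and equating these (after absorbing obvious cancellations) forces each coefficient in the $k$-expansion to lie in $\bar{I_{k'}}$ for every $k'\in M\setminus\{k\}$. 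Thus every monomial of $z$ already carries a factor $z_{k,i_k}$ for each $k\in M$, and multiplying by $\prod_{k\in M^c}\bar{I_k}$ supplies the remaining factors to produce an element of $R[\text{products}]$ that vanishes on substitution.

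The main technical obstacle is making the cancellation argument rigorous when $|M|\geq 2$, especially in the presence of the identifications $z_{i,j}\sim z_{k,l}$ whenever $g_{i,j}=g_{k,l}$, since a single variable in $S$ may then lie simultaneously in several $\bar{I_k}$'s and blur the monomial matching. The cleanest formalization picks, for each distinct generator of $\prod_k I_k$, a canonical representative, then compares monomials in the simultaneous expansions $\sum a_m z_{k,m}=\sum b_l z_{j,l}$ in this canonical form to conclude the coefficient-in-$\bar{I_{k'}}$ statement. Once this pairwise argument is promoted by a routine bookkeeping induction to all subsets $M$ with $|M|\geq 2$, combining the two containments delivers the claimed equality.
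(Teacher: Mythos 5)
Your proposal is correct and follows essentially the same route as the paper: the forward containment via the index-by-index dichotomy (common factor pulls out versus distinct indices exhibiting a syzygy of $I_k$), and the reverse containment via comparing the simultaneous expansions $\sum a_m z_{k,m}=\sum b_l z_{j,l}$ after cancellation. The only difference is that you make explicit the bookkeeping (induction on $r$, the $|M|=1$ case, canonical representatives for identified variables) that the paper treats informally.
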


\subsection{Viewing syzygies of intersections in terms of products}

Building on the ideas from the previous section, our goal is to ultimately understand the syzygies of $\cap I_k$ in terms of the syzygies of $\prod I_k$ which we studied in the last section.

Let $T = R[w_1,...,w_n]$ (simplified from the $z_{i,j}$ coordinates used in the previous section) and observe that
$$\Syz(\Pi I_k)\text{ , }\Syz(\cap I_k)\subset T$$
where $w_i$ is associated to $g_i\in R$. 

Consider $z \in \Syz(\cap I_k)$. Then
$$z = a_1y_1+...+a_py_p$$
where $a_i\in R$ and $y_i$ corresponds to a generator of $\cap I_k$ (using \cite{Bertiger} to fix generators). Then $z^r$ involves products of $r$ choices of $y_i$ which when restricted to $g_j$ must be in $\prod I_k$ (indeed $\cap I_k\subset \sum I_k$). That is to say that $z^r \in \Syz(\prod I_k)$. Finally, since $\prod I_k\subset \cap I_k$, we have the same containment regarding syzygies.

We have thus shown

$$Syz(\cap I_k)\subset \sqrt{\Syz(\Pi I_k)} \subset \sqrt{\Syz(\cap I_k)}$$

The next lemma provides the last piece to proving the main result for this section.

\begin{lemma}
If $\cap I_k $ is reduced, then so is $\Syz(\cap I_k)$.
\end{lemma}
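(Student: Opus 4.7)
The plan is to compare $T/\Syz(\cap I_k)$ with the Rees algebra $R[Jt]$, where $J = \cap I_k$, via the natural evaluation map, and argue that reducedness of $J$ forces these two rings to coincide — after which radicality of the syzygy ideal follows because the Rees algebra is a subring of a polynomial ring over a domain.

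First I would set up the $R$-algebra homomorphism $\phi \colon T = R[w_1, \ldots, w_n] \to R[t]$ sending $w_i \mapsto g_i t$, where $\{g_i\}$ is the chosen generating set for $J$. By construction, each first syzygy $\sum_i a_i w_i$ (with $\sum_i a_i g_i = 0$ in $R$) lies in $\ker(\phi)$, so $\Syz(\cap I_k) \subseteq \ker(\phi)$, and $\phi$ descends to a surjection $\bar\phi \colon T/\Syz(\cap I_k) \twoheadrightarrow R[Jt]$. Since $R$ is a Noetherian integral domain, $R[t]$ is a domain, so $R[Jt] \subseteq R[t]$ is reduced. This already gives $\sqrt{\Syz(\cap I_k)} \subseteq \ker(\phi)$: if $f^N \in \Syz(\cap I_k)$ then $\phi(f)^N = 0$ in the domain $R[t]$, so $\phi(f) = 0$.

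The main step is then to show the reverse containment $\ker(\phi) \subseteq \Syz(\cap I_k)$, i.e.\ that $J$ is of linear type under the reducedness hypothesis. Given $f \in \ker(\phi)$, decompose $f$ by the $w$-grading as $f = f_0 + f_1 + \cdots + f_d$; since $\phi$ sends the degree-$i$ piece of $T$ into $J^i t^i$, each $f_i$ lies separately in $\ker(\phi)$. Degrees $0$ and $1$ are immediate (vanishing, respectively a first syzygy by definition). For $i \geq 2$, I would induct: a degree-$i$ element $f_i \in \ker(\phi)$ records a relation $f_i(g_1, \ldots, g_n) = 0$ among degree-$i$ monomials in the $g_j$'s, and I would attempt to exhibit it as a $T$-combination of first syzygies, invoking Theorem 5.3 to express $\Syz(\prod I_k)$ in terms of the $\Syz(I_k)$'s and using that $R/J$ is reduced to ensure that any relation forced by the $g_j$'s genuinely lifts rather than being an artifact of a nilpotent.

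The hardest part will be this higher-degree step: a general reduced ideal need not be of linear type, so the argument cannot rely only on radicality of $J$ but must exploit the specific intersection structure $J = \cap I_k$ together with Bertiger's determinantal description of its generators referenced just before the lemma. Once injectivity of $\bar\phi$ is established, $T/\Syz(\cap I_k) \cong R[Jt]$ is a domain, $\Syz(\cap I_k)$ is prime and hence radical, and the chain $\Syz(\cap I_k) \subseteq \sqrt{\Syz(\prod I_k)} \subseteq \sqrt{\Syz(\cap I_k)}$ collapses to equalities, completing the reduction of syzygies of intersections to syzygies of products.
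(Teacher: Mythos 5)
The first half of your argument only delivers the easy containment: since the first syzygies map to $0$ under $w_i\mapsto g_it$ and $R[Jt]\subset R[t]$ is a domain, any $f$ with $f^N\in\Syz(\cap I_k)$ lies in $\ker\phi$. Everything therefore hinges on the reverse containment $\ker\phi\subseteq\Syz(\cap I_k)$, i.e.\ that $J=\cap I_k$ is of linear type, and this is exactly the step you do not prove. As you yourself concede, radicality of $J$ cannot give it, and no actual argument is offered that the intersection structure of $J$ or Bertiger's generators force linear type; the appeal to the product formula for $\Syz(\prod I_k)$ and to ``relations genuinely lifting'' is not a proof sketch that could be completed as stated. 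Note also that your route, if it worked, would show $\Syz(\cap I_k)$ is \emph{prime}, a conclusion far stronger than the lemma asserts and stronger than what the paper needs anywhere (the proposition that follows only uses that $\Syz(\cap I_k)$ is radical, to collapse $\sqrt{\Syz(\prod I_k)}$ onto it). So the proposal has a genuine gap at its central step, and the gap is not a technicality: identifying the syzygy ideal with the Rees ideal is a substantive statement about $J$ that is false for general radical ideals and unverified here.

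The paper's own proof takes a different, much more elementary route that never mentions the Rees algebra or linear type: it argues element by element using the evaluation $w_i\mapsto g_i$. Given $z=a_0+a_1w_1+\cdots+a_nw_n$ with $z^m\in\Syz(\cap I_k)$, one evaluates at $w_i=g_i$, observes that the $m$-th powers of the individual terms $a_0$ and $a_iw_i$ then land in $\cap I_k$, and uses that $\cap I_k$ is radical to conclude that each $a_ig_i$ itself lies in $\cap I_k$, whence $z\in\Syz(\cap I_k)$. In other words, the hypothesis ``$\cap I_k$ reduced'' is used only to extract individual terms from an $m$-th power after evaluation, not to control the structure of the presentation ideal; that is why it suffices there, while it cannot suffice for the primality statement your strategy requires.
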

\begin{proof}
Given an element $z =a_0+a_1w_1+...+a_nw_n$ of $R[w_1,...,w_n]$, suppose that

$$ (a_0+a_1w_1+...+a_nw_n)^m \in \Syz(\cap I_k)   $$

Then $a_0^m$ and $(a_iw_i)^m$ must be in $\cap I_k$ after evaluating at $w_i=g_i$. Since $\cap I_k$ is reduced, the restriction of each $a_iw_i$ must also be in $\cap I_k$. Therefore $z\in \Syz(\cap I_k)$.
\end{proof}

Using Frobenius splittings to show this result might also prove useful. Finally, we have:

\begin{proposition}
$\sqrt{Syz(\prod I_w)} = Syz(\cap I_w)$.
\end{proposition}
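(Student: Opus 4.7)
The plan is to combine the two containments already derived just above the statement with the reducedness lemma, once we verify that the relevant intersection ideal is actually reduced in our setting.

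First, I would recall the sandwich inclusion established in the preceding discussion,
$$\Syz(\cap I_w) \subset \sqrt{\Syz(\textstyle\prod I_w)} \subset \sqrt{\Syz(\cap I_w)}.$$
The left inclusion came from $\prod I_w \subset \cap I_w$, while the right inclusion came from the observation that any $z \in \Syz(\cap I_w)$ satisfies $z^r \in \Syz(\prod I_w)$ for $r$ equal to the number of factors, by expanding $z^r$ into $r$-fold products of generators of $\cap I_w$ and noting these products lie in $\prod I_w$ after substitution.

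Next, I would invoke the previous lemma: if $\cap I_w$ is reduced, then $\Syz(\cap I_w)$ is a radical ideal, and therefore $\Syz(\cap I_w) = \sqrt{\Syz(\cap I_w)}$. The two inclusions in the sandwich then collapse into a single equality
$$\Syz(\cap I_w) \;=\; \sqrt{\Syz(\textstyle\prod I_w)} \;=\; \sqrt{\Syz(\cap I_w)},$$
which is the desired statement.

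The remaining point — and the only substantive one — is to justify that $\cap I_w$ is reduced in the setting of interest. This is where Frobenius splittings enter. Each $I_{u,v}$ is the defining ideal of the Kazhdan–Lusztig variety $X^u \cap X_v^\circ$, and by Proposition 1.17 together with its corollary, all of these are compatibly Frobenius split inside $X_v^\circ$. Since the class of compatibly split subschemes is closed under finite intersections and is automatically reduced, the boundary ideal $\bigcap_{u \lessdot w} I_{u,v}$ is itself radical. This is the only place where anything beyond the formal manipulations above is used, and I expect it to be the main (and in fact only) conceptual obstacle; the author's parenthetical remark about Frobenius splittings proving useful here confirms this is the intended mechanism.
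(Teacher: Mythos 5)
Your proposal is correct and takes essentially the same route as the paper: the sandwich containment $\Syz(\cap I_w)\subset\sqrt{\Syz(\prod I_w)}\subset\sqrt{\Syz(\cap I_w)}$ collapsed by the lemma that $\Syz(\cap I_w)$ is radical when $\cap I_w$ is. Two minor remarks: you swapped the attributions (the $z^r$ expansion gives the left inclusion, while $\prod I_w\subset\cap I_w$ gives the right), and the radicality of $\bigcap_{u\lessdot w} I_{u,v}$ needs no Frobenius splitting at all, since it is an intersection of prime ideals and hence automatically radical, although your compatible-splitting argument is also valid.
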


This shows that understanding the syzygies of the product is the same as understanding the syzygies of the intersection.

\subsection{Specializing to bigrassmannians}
We know that a Schubert variety $X^w$ can be decomposed as

$$X^w = \displaystyle\bigcap _{\substack{\nu\geq w, \\ \text{ $w_0\nu$
bigrassmannian}}} X^{\nu}.$$
Bigrassmannians are especially easy to work with since they involve only one rank condition.

In our problem, we are blowing-up along the ideal $\cap J^{\sigma}$ where each $J^{\sigma}$ is the defining ideal for a component of the boundary divisor of $X^w\cap X_{\tau}^{\circ}$ (which is defined by $I$). Since the boundary has dimension 1 less than $X^w\cap X_{\tau}^{\circ}$, we can write

$$ J^{\sigma} = I + I^{\nu}$$

where $\nu$ is a bigrassmannian.

Since a bigrassmannian $\nu$ is defined by the vanishing of all $k\times k$ minors in an $m\times n$ box, we can define a simple bigrassmannian $\nu_s$ as the case when $m=k$ and $n=k+1$. Such simple bigrassmannians have a special property.

\begin{lemma}
The syzygy ideal of $I^{\nu_s}$, with $\nu_s$ a simple bigrassmannian, is a determinantal variety.
\end{lemma}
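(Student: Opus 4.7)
The plan is to invoke the classical Hilbert-Burch description of the syzygies of the maximal minors of a $k\times(k+1)$ matrix. A simple bigrassmannian $\nu_s$ is defined by a single rank-$<k$ condition on a $k\times(k+1)$ submatrix $M=(a_{ij})$ of the Schubert-patch coordinates, so $I^{\nu_s} = \langle m_1,\dots,m_{k+1}\rangle$ where $m_j := (-1)^{j-1}\det(M^{(j)})$ denotes the signed maximal minor obtained by deleting column $j$ of $M$.

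First I would write down the obvious linear syzygies. For each $i=1,\dots,k$, duplicating the $i$-th row of $M$ produces a $(k+1)\times(k+1)$ matrix with a repeated row, hence determinant zero; Laplace-expanding along the duplicated row yields
\[
\sum_{j=1}^{k+1} a_{ij}\,m_j \;=\; 0, \qquad i=1,\dots,k.
\]
In the formalism of Lemma \ref{classic-syzygy}, these are $k$ elements
\[
\sigma_i \;=\; \sum_{j=1}^{k+1} a_{ij}\,\epsilon_j \;\in\; \Syz(I^{\nu_s}) \;\subset\; S[\epsilon_1,\dots,\epsilon_{k+1}].
\]

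Next I would argue that the $\sigma_i$ exhaust $\Syz(I^{\nu_s})$. This is the content of the Hilbert-Burch theorem: since the ideal of maximal minors of a $k\times(k+1)$ matrix attains its maximum possible height $2$ in the Schubert setting (guaranteed by the Cohen-Macaulayness of the Kazhdan-Lusztig variety, Lemma \ref{schubert-normal} together with Corollary \ref{Knutson-corollary}), its first syzygy module is free of rank $k$ and generated by the rows of $M$. Hence $\Syz(I^{\nu_s}) = \langle \sigma_1,\dots,\sigma_k\rangle$ exactly.

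Finally I would read off the determinantal description: the $\sigma_i$ are precisely the entries of the product column $M\cdot(\epsilon_1,\dots,\epsilon_{k+1})^T$, so the vanishing locus $V(\Syz(I^{\nu_s}))$ in $\spec S\times\mathbb{A}^{k+1}_{\epsilon}$ is the incidence scheme $\{(M,\epsilon):M\epsilon=0\}$, cut out by the entries of $M\epsilon$ --- a determinantal variety. The main obstacle is ensuring that the height hypothesis of Hilbert-Burch is genuinely attained in the Kazhdan-Lusztig patch rather than in a generic polynomial ring: the entries $a_{ij}$ are Schubert-patch coordinates satisfying relations (the ambient $I_{w,v}$), not free indeterminates, so one must appeal to the Cohen-Macaulay/normal structure from Lemma \ref{schubert-normal} and Corollary \ref{Knutson-corollary} to conclude that $I^{\nu_s}$ has the expected codimension before the Hilbert-Burch conclusion applies verbatim.
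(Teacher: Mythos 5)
Your route is sound, but it is not the paper's: the paper dispatches this lemma by citation, noting that the first syzygies of the minors defining a bigrassmannian are known and determinantal by \cite{Ma}, and that for a simple bigrassmannian the resulting equations are exactly what one gets by augmenting the matrix of $\nu_s$ as in Example \ref{BU-KL}, i.e.\ Kazhdan--Lusztig-type determinantal equations. You instead prove the needed special case directly: the Laplace-expansion relations $\sigma_i=\sum_j a_{ij}\epsilon_j$, plus the Hilbert--Burch theorem to see that (given grade two) the first-syzygy module is free on the rows of $M$, so that with the paper's convention (Lemma \ref{classic-syzygy}, where the syzygy ideal is the ideal generated by the first-syzygy equations) one gets $\Syz(I^{\nu_s})=\langle\sigma_1,\dots,\sigma_k\rangle$, the ideal of entries of $M\epsilon$. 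Your argument is more self-contained and makes the linear-type nature of the presentation explicit; the paper's argument, by contrast, is what lets it read the answer off as the defining equations of another Kazhdan--Lusztig variety, which is the point of the surrounding discussion about products of Kazhdan--Lusztig varieties.

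One correction to how you justify the grade hypothesis. In the paper's setup (Lemma \ref{fg-algebra} and the syzygy subsections) the syzygies are computed over the ambient polynomial ring of the patch, where the entries of the box are honest indeterminates (up to the $0$/$1$ entries coming from $v$); they are not taken modulo $I_{w,v}$, so your worry that the $a_{ij}$ ``satisfy relations from the ambient $I_{w,v}$'' does not arise, and Cohen--Macaulayness of $X^w\cap X_v^{\circ}$ is not the relevant input. What you actually need is that $I^{\nu_s}$ has height two in the polynomial ring: this holds because $I^{\nu_s}$ is itself the Kazhdan--Lusztig ideal of the bigrassmannian $\nu_s$ in this patch, cut out by a single essential rank condition of codimension $(k-(k-1))\bigl((k+1)-(k-1)\bigr)=2$ (in the fully generic case of Example \ref{KL-example} this is just the classical bound for maximal minors). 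Since the ambient ring is a polynomial ring, grade equals height, and Hilbert--Burch then applies verbatim; Lemma \ref{schubert-normal} and Corollary \ref{Knutson-corollary} are only relevant insofar as they concern $\nu_s$'s own Schubert/Kazhdan--Lusztig variety, not the ambient $I_{w,v}$.
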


In fact the equations for $I^{\nu_s}$ can be found in the same was as Example \ref{BU-KL} by adding an extra row and column to the matrix for $\nu_s$. This corresponds to the defining equations for a Kazdhan-Lusztig variety. When $\nu$ is a bigrassmannian, we know exactly what the generators are for the syzygy ideal of $I^{\nu}$ thanks to \cite{Ma}.

To make use of these facts, we notice that since
$$ \displaystyle \sum_{\emptyset\neq M\subset\{1,...,r\}} (\prod_{k\in M^c}\bar{J^{\sigma_k}})(\bigcap_{k\in M} \Syz(J^{\sigma_k})) =\Syz(\prod J^{\sigma})\subset\Syz(\cap J^{\sigma}) $$
there are a number of ideals for Kazhdan-Lusztig varieties corresponding to simple bigrassmannians contained in this sum of ideals, some of which might be interesting to consider. In particular, with some work we could make some mild conjectures: perhaps the blow-up of a Kazhdan-Lusztig variety along its boundary is contained in some product of Kazhdan-Lusztig varieties (with some GIT quotient to account for certain variables in degree one). This would generalize the results we observed in low dimensions. However the containment at this time doesn't seem to be explicit enough to get a useful form for the total transform. Extending the $B$-action to the total transform might identify it as a $B$-invariant subvariety of this product.

The fact that a product of Kazhdan-Lusztig varieties is appearing in our local computations suggests that perhaps globally we can describe the blowup as some product of Schubert-like varieties. This is where generalized Bott-Samelson varieties come into the picture, further described in the next section.

\section{Understanding the total transform}\label{understanding-total-transform}

To get an explicit modular interpretation of blow-up of $X^w$ along $\partial X^w$, we seek a way to compare $\widetilde{X^w}$ to other well-studied varieties.  We will exploit Bott-Samelson resolutions of $X^w$ to get convenient maps from Bott-Samelson varieties onto $\widetilde{X^w}$. To obtain this map, we first recall the universal property of blow-up maps.

 \subsubsection{Universal property of blow-ups} The blow-up of a scheme $X$ along some closed subscheme $Y$ is a fibre diagram:

  \[
  \begin{tikzcd}
  E \arrow[swap]{d}\arrow{r} & Bl_Y(X)\arrow{d}\\
  Y \arrow{r} & X
  \end{tikzcd}
  \]
 where $E$ is an effective Cartier divisor, such that any other map to $X$ with an effective Cartier divisor as the fibre over $Y$ factors through the diagram. The only Cartier divisors considered in the sections that follow are effective.

\begin{lemma}
Let $\pi:\widetilde{X^w}\rightarrow X^w$ be the blow-up of $X^w$ along $\partial X^w$. Suppose $\varphi_Q:BS^Q\rightarrow X^w$ is a generalized Bott-Samelson resolution such that $\varphi_Q^{-1}(\partial X^w)$ is a Cartier divisor in $BS^Q$. Then there exists a surjective birational proper map $\psi_Q:BS^Q\rightarrow \widetilde{X^w}$ such that

\[
  \begin{tikzcd}
  BS^Q \arrow[swap]{rd}{\varphi_Q}\arrow{r}{\psi_Q} & \widetilde{X^w}\arrow{d}{\pi}\\
  & X^w
  \end{tikzcd}
  \]
\end{lemma}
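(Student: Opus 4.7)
The plan is to deduce $\psi_Q$ directly from the universal property of the blow-up stated at the beginning of this section, and then to verify the remaining properties (proper, birational, surjective) one at a time. The hypothesis that $\varphi_Q^{-1}(\partial X^w)$ is an effective Cartier divisor on $BS^Q$ is precisely what is needed to apply this universal property: it yields a unique morphism $\psi_Q: BS^Q \to \widetilde{X^w}$ with $\pi \circ \psi_Q = \varphi_Q$, simultaneously producing the map and the commutativity of the triangle.

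For properness I would argue abstractly: $BS^Q$ is projective (as noted in the proof of Lemma \ref{BS-birational}) and $\widetilde{X^w}$ is separated, being a $\proj$ of a graded algebra, so any morphism from a proper scheme to a separated one over $\spec k$ is automatically proper. In particular $\psi_Q$ is proper, and its image is closed in $\widetilde{X^w}$.

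For birationality, I would set $U := X^w \setminus \partial X^w$, over which $\pi$ is an isomorphism. Because $\varphi_Q$ is a Bott-Samelson resolution (so $Q$ is reduced), Lemma \ref{BS-birational} gives that $\varphi_Q$ is birational, hence restricts to an isomorphism onto a dense open $V \subseteq U$. The factorization $\varphi_Q = \pi\circ \psi_Q$ then forces $\psi_Q$ to restrict to an isomorphism $\varphi_Q^{-1}(V) \xrightarrow{\sim} \pi^{-1}(V)$. Surjectivity follows immediately: $\psi_Q(BS^Q)$ is closed in $\widetilde{X^w}$ by properness and contains the dense open $\pi^{-1}(V)$, so it must equal all of $\widetilde{X^w}$.

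In this formulation the nontrivial content is entirely packaged into the hypothesis that $\varphi_Q^{-1}(\partial X^w)$ is Cartier on $BS^Q$; the proof of the lemma itself is a purely formal universal-property argument together with a standard dense-open-birational bookkeeping. The real obstacle in using the lemma will therefore lie outside its proof, namely in exhibiting a generalized Bott-Samelson space $BS^Q$ whose pullback of $\partial X^w$ is Cartier, which I expect will occupy the bulk of the work in the theorem that follows.
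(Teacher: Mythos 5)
Your proposal is correct and follows essentially the same route as the paper: the map comes from the universal property of the blow-up (the Cartier hypothesis on $\varphi_Q^{-1}(\partial X^w)$ being exactly what is needed), and properness, birationality and surjectivity are then formal consequences. Your surjectivity step (closed image by properness plus density of $\pi^{-1}(V)$ in the irreducible $\widetilde{X^w}$) is in fact spelled out more carefully than the paper's brief appeal to the fibre diagram, but it is the same argument in substance.
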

\begin{proof}
By the universal property of blow-up maps, $\varphi_Q$ factors through the blow-up of $X^w$ along $\partial X^w$. The map $\psi_Q$ is birational since $\varphi_Q$ and $\pi$ are (see \ref{BS-birational}). Similarly, $\psi_Q$ is proper since $\varphi_Q$ is proper and $\pi$ is separated. Finally, since the universal property uses fibre diagrams, $\pi$ and $\varphi_Q$ surjective implies that $\psi_Q$ is surjective as well.
\end{proof}

\begin{corollary}\label{universal}
Suppose $\varphi_Q:BS^Q\rightarrow X^w$ is a generalized Bott-Samelson resolution where $BS^Q$ is factorial. Then there exists surjective proper birational map $\psi_Q:BS^Q\rightarrow \widetilde{X^w}$ which factors through the blow-up of $X^w$ along $\partial X^w$. In particular, choosing $Q=(w_1,...,w_k)$ such that each $w_i$ is a simple reflection provides one such map.
\end{corollary}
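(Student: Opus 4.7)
The plan is to deduce the corollary directly from the preceding lemma by verifying the one missing hypothesis, namely that $\varphi_Q^{-1}(\partial X^w)$ is an effective Cartier divisor in $BS^Q$. Once this is established, the lemma produces the desired factorization $\psi_Q: BS^Q \to \widetilde{X^w}$ together with surjectivity, properness, and birationality.

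First I would check that $\varphi_Q^{-1}(\partial X^w)$, viewed as a reduced closed subscheme of $BS^Q$, is a Weil divisor, i.e.\ has pure codimension one. Since $\varphi_Q$ is proper and birational (Lemma \ref{BS-birational}), it is an isomorphism over a dense open $U \subset X^w$, and we may arrange $U \cap \partial X^w \neq \emptyset$ because $\partial X^w$ has codimension one in $X^w$. Hence every irreducible component of $\varphi_Q^{-1}(\partial X^w)$ that meets $\varphi_Q^{-1}(U)$ has codimension one, and the remaining components lie in the exceptional locus of $\varphi_Q$, which itself has codimension one (or else $\varphi_Q$ would be an isomorphism, contradicting the existence of a Cartier divisor failure for $\partial X^w$ in $X^w$). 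Thus $\varphi_Q^{-1}(\partial X^w)$ has pure codimension one.

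Next, I would invoke factoriality: on a locally factorial scheme every (effective) Weil divisor is locally principal, hence Cartier. Applied to $BS^Q$, this shows that the codimension-one subscheme $\varphi_Q^{-1}(\partial X^w)$ is a Cartier divisor. With this verified, the preceding lemma applies verbatim and yields the surjective proper birational morphism $\psi_Q:BS^Q\to\widetilde{X^w}$ factoring $\varphi_Q$ through the blow-up map $\pi$.

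For the final sentence, when $Q=(w_1,\dots,w_k)$ with each $w_i$ a simple reflection, Lemma \ref{BS-smooth} shows that $BS^Q$ is smooth. A smooth variety over an algebraically closed field is regular, and regular local rings are unique factorization domains, so $BS^Q$ is factorial; the first part of the corollary then produces $\psi_Q$.

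The only mildly delicate step is the pure codimension-one claim for $\varphi_Q^{-1}(\partial X^w)$; everything else is a formal application of factoriality plus the lemma. This should be routine given that $\varphi_Q$ is proper birational between irreducible varieties of the same dimension, so the preimage of a codimension-one subset cannot jump in codimension.
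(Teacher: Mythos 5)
Your skeleton is the same as the paper's: show that $\varphi_Q^{-1}(\partial X^w)$ is a divisor, invoke factoriality of $BS^Q$ to upgrade Weil to Cartier, apply the preceding lemma, and handle the last sentence via Lemma \ref{BS-smooth} together with smooth $\Rightarrow$ factorial. The paper disposes of the first step with the phrase ``by dimension considerations''; the place where you try to substantiate that step is where your argument breaks. The parenthetical claim that the exceptional locus of $\varphi_Q$ has codimension one ``or else $\varphi_Q$ would be an isomorphism'' is a false dichotomy for proper birational morphisms: small contractions are not isomorphisms and have exceptional locus of codimension $\geq 2$. Concretely, for the small resolution $Y$ of $X=\{xy=zw\}\subset\mathbb{A}^4$ obtained by blowing up $(x,z)$, the source is smooth (hence factorial) and the preimage of the non-Cartier Weil divisor $\{y=z=0\}$ acquires the exceptional curve as a codimension-two component. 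Purity of the exceptional locus is a hypothesis on the \emph{target}: it holds when the target is ($\mathbb{Q}$-)factorial, which is exactly what cannot be assumed for $X^w$ here -- the failure of $\partial X^w$ to be Cartier is the situation this paper cares about, and factoriality of the source $BS^Q$ does not rescue the claim. A smaller point: ``$U$ dense open'' alone does not give $U\cap\partial X^w\neq\emptyset$; you need $\codim(X^w\setminus U)\geq 2$, which does hold by normality of $X^w$ and Zariski's main theorem.

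The conclusion you are after is nevertheless true, but for a reason specific to Bott--Samelson maps rather than general birational geometry. Since $\partial X^w=X^w\setminus X^w_{\circ}$ and, for $Q$ reduced (which is forced by birationality, Lemma \ref{BS-birational}), $\varphi_Q$ sends the complement of the evident sub-Bott--Samelson divisors (in the simple-reflection case the loci $D_j=\{p_j\in B\}$) into the open cell $X^w_{\circ}$, the set-theoretic preimage $\varphi_Q^{-1}(\partial X^w)$ is exactly the union of these divisors, hence pure of codimension one. With that substitution for your purity argument, the remainder of your proof (factoriality gives Cartier, then the lemma, then smoothness for the simple-reflection case) coincides with the paper's.
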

\begin{proof}
By dimension considerations, $\psi_Q^{-1}(\partial X^w)$ is a divisor in $BS^Q$. Since Cartier and Weil divisors are equivalent in factorial varieties, the result follows. Furthermore, choosing $Q$ so that the $w_i$ are simple reflections results in $BS^Q$ being smooth (by Lemma \ref{BS-smooth}) and hence factorial. 
\end{proof}

Understanding when $BS^Q$ is factorial requires an understanding of the same question for Schubert varieties. There is a pattern embedding description for this in \cite{Woo-Yong}, at least in the $GL_n$ case. One key takeaway from Corollary \ref{universal} is that there exist many such maps $\psi_Q$ when we choose $Q$ appropriately. The existence of these maps already provides us with new information regarding the $T$-fixed points of $\widetilde{X^w}$.

\begin{lemma}\label{equivariant}
Let $\pi:\widetilde{X^w}\rightarrow X^w$ be the blow-up of $X^w$ along $\partial X^w$. Then the $B$-action on $X^w$ extends to the total transform $\widetilde{X^w}$. Furthermore, $\psi_Q$ is a $B$-equivariant map.
\end{lemma}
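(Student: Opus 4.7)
The plan is to obtain both statements from the universal property of blow-ups, using that $\partial X^w$ is a $B$-stable subscheme.

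First, for the existence of the $B$-action on $\widetilde{X^w}$, I would argue as follows. Since each component $X^v$ with $v\lessdot w$ is $B$-stable (being a Schubert variety), the boundary $\partial X^w = \bigcup_{v\lessdot w} X^v$ is $B$-stable as well. For each $b\in B$, the action $\mu_b: X^w\to X^w$ sends $\partial X^w$ to itself, so the composite $\mu_b\circ \pi: \widetilde{X^w}\to X^w$ pulls back $\partial X^w$ to the Cartier divisor $\pi^{-1}(\partial X^w)$, which is already a Cartier divisor. By the universal property of $\pi$ as the blow-up of $X^w$ along $\partial X^w$, there is a unique morphism $\tilde\mu_b:\widetilde{X^w}\to \widetilde{X^w}$ such that $\pi\circ\tilde\mu_b = \mu_b\circ\pi$. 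The uniqueness clause forces $\tilde\mu_{bb'} = \tilde\mu_b\circ\tilde\mu_{b'}$ and $\tilde\mu_e = \id$, so one obtains a set-theoretic action. To see it is algebraic, one applies the same universal property argument to the action morphism $B\times X^w\to X^w$: the preimage of $\partial X^w$ under $\id_B\times \pi: B\times\widetilde{X^w}\to B\times X^w$ is Cartier, hence this morphism factors uniquely through $\pi$, yielding a morphism $B\times\widetilde{X^w}\to\widetilde{X^w}$ that restricts to the $\tilde\mu_b$ fibrewise. This is the desired extended action, and it makes $\pi$ a $B$-equivariant morphism.

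Second, to see that $\psi_Q$ is $B$-equivariant, I would again invoke uniqueness in the universal property. The Bott-Samelson variety $BS^Q$ carries a natural $B$-action (from the left multiplication on the first factor $\overline{Bw_1B}$ of its defining product), and $\varphi_Q: BS^Q\to X^w$ is $B$-equivariant. For each $b\in B$, the two morphisms $\psi_Q\circ \nu_b$ and $\tilde\mu_b\circ\psi_Q$ from $BS^Q$ to $\widetilde{X^w}$ both fit into commutative diagrams
\[
\pi\circ(\psi_Q\circ \nu_b) \;=\; \varphi_Q\circ \nu_b \;=\; \mu_b\circ\varphi_Q \;=\; \mu_b\circ\pi\circ\psi_Q \;=\; \pi\circ(\tilde\mu_b\circ\psi_Q),
\]
and both realize a morphism from $BS^Q$ whose composite with $\pi$ equals $\mu_b\circ\varphi_Q$. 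Since $\varphi_Q^{-1}(\partial X^w)$ is Cartier in $BS^Q$ (the hypothesis under which $\psi_Q$ was produced), the universal property gives a unique factorization, forcing $\psi_Q\circ\nu_b = \tilde\mu_b\circ\psi_Q$. As before, promoting this from pointwise equality to equality of morphisms $B\times BS^Q \to \widetilde{X^w}$ uses the universal property applied to the action map as a whole.

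The arguments are in the end purely formal; the only nontrivial input is that all subschemes in sight, principally $\partial X^w$ and its pullback to $BS^Q$, are $B$-stable, which is immediate. The step I expect to require the most care is the promotion from a family of morphisms $\{\tilde\mu_b\}_{b\in B}$ to a genuine algebraic action morphism $B\times\widetilde{X^w}\to\widetilde{X^w}$, since the universal property must be applied to the relative situation over $B$ (i.e.\ viewing $B\times\widetilde{X^w}\to B\times X^w$ as a blow-up relative to $B$, or equivalently checking Cartierness of the pullback of $\partial X^w$ under $B\times\pi$). Once this relative version of the universal property is in place, both assertions of the lemma follow immediately.
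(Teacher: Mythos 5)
Your argument is correct and follows essentially the same route as the paper: the paper obtains the extended $B$-action by citing Proposition 3.9.1 of Koll\'ar (whose proof is exactly the universal-property/invariant-center argument you spell out, including the relative version over $B$), and then deduces $B$-equivariance of $\psi_Q$ from that of $\varphi_Q$ and $\pi$, which is the same uniqueness-of-factorization point you make. The only difference is that you reprove the cited result rather than quoting it, which is fine.
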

\begin{proof}

Since $\partial X^w$ is $B$-invariant, the $B$-action extends to $\tilde{X}^w$ (see Proposition 3.9.1 of \cite{Kollar}). Furthermore, $\pi$ is $B$-equivariant. Since $\varphi_Q$ is also $B$-equivariant, then $\psi_Q$ is too.
\end{proof}

\begin{corollary}
$\widetilde{X^w}$ has finitely many $T$-fixed points.
\end{corollary}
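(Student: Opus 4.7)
The plan is to push the finiteness from $BS^Q$ down to $\widetilde{X^w}$ via the $T$-equivariant surjection $\psi_Q$ constructed just above, by showing $T$-fixed points of the source surject onto $T$-fixed points of the target.

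First I would choose $Q=(s_{i_1},\dots,s_{i_k})$ to be a reduced word for $w$ in simple reflections, so that $BS^Q$ is smooth (hence factorial) by Lemma \ref{BS-smooth}. Corollary \ref{universal} then supplies a surjective, proper, birational, $B$-equivariant morphism $\psi_Q:BS^Q\to\widetilde{X^w}$ through which $\varphi_Q$ factors, and Lemma \ref{equivariant} says this map is $B$-equivariant, hence $T$-equivariant since $T\subset B$. In this setting $BS^Q$ is an iterated $\mathbb{P}^1$-bundle over a point (a Bott tower), so it is projective and carries the familiar torus action whose fixed points are in bijection with $\{0,1\}^k$; in particular $BS^Q$ has only finitely many $T$-fixed points.

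Next I would argue surjectivity of $\psi_Q$ on $T$-fixed point sets. By $T$-equivariance, $\psi_Q$ sends $(BS^Q)^T$ into $(\widetilde{X^w})^T$. Conversely, given any $T$-fixed point $q\in\widetilde{X^w}$, the scheme-theoretic fiber $\psi_Q^{-1}(q)$ is a nonempty (by surjectivity of $\psi_Q$) closed $T$-invariant subscheme of the projective variety $BS^Q$, hence is itself a complete $T$-variety. Borel's fixed point theorem, applied to the connected solvable group $T$ acting on this proper $T$-invariant subscheme, produces a $T$-fixed point of $BS^Q$ lying over $q$. Therefore $\psi_Q\big((BS^Q)^T\big)=(\widetilde{X^w})^T$, and the latter set, being the image of a finite set, is finite.

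There is no real obstacle here beyond bookkeeping; the only point to verify carefully is that Borel's theorem applies, which only requires the fiber to be a proper $T$-scheme, a condition automatic from $\psi_Q$ being a morphism of projective varieties over an algebraically closed field. One could alternatively bypass Borel by noting that each $T$-fixed point of $\widetilde{X^w}$ maps under $\pi$ to a $T$-fixed point of $X^w$, of which there are finitely many ($e_v$ for $v\leq w$), and then observing that $\pi^{-1}(e_v)$ is a proper $T$-invariant subscheme of $\widetilde{X^w}$ whose $T$-fixed locus can likewise be controlled through $\psi_Q$; but the direct argument via $(BS^Q)^T$ is cleaner.
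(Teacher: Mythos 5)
Your proposal is correct and follows essentially the same route as the paper: use the $T$-equivariance of $\psi_Q$ (Lemma \ref{equivariant}) together with the finiteness of $(BS^Q)^T$, with the paper's terse assertion that ``$T$-fixed points must map onto $T$-fixed points'' corresponding to your Borel fixed point argument on the fibers. The extra detail you supply (choosing $Q$ reduced in simple reflections so Corollary \ref{universal} applies, and invoking Borel's theorem on the proper $T$-invariant fiber) is exactly the justification the paper leaves implicit.
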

\begin{proof}
By Lemma \ref{equivariant}, $T$-fixed points must map onto $T$-fixed points. The result follows from the fact that $BS^Q$ has finitely many $T$-fixed points.
\end{proof}

Our end goal is to show that understanding $\widetilde{X^w}$ restricts to an understanding of its $T$-fixed points. Since the $T$-fixed points of $BS^Q$ are easy to describe, the hope is that $\psi_Q$ can provide us with an explicit statement about the structure of $\widetilde{X^w}$.

\subsection{Extending the frobenius splitting}

Our proof that $\widetilde{X^w}$ is Gorenstein utilized the notion of Frobenius splittings and their relation to sections of the anticanonical bundle. It is natural to ask whether $\widetilde{X^w}$ is also Frobenius split.

There do exist results about extending Frobenius splittings to the total transform of a blow-up, such as in the work of Lakshmibai, Mehta, and Parameswaran in \cite{LMP}. This result however only works for the blow-up of a smooth Frobenius split variety along a smooth center (under certain mild vanishing conditions). In Proposition \ref{toric-frobenius}, we extended the Frobenius splitting in the toric case.

There are two natural approaches to extending the Frobenius splitting to the total transform. One perspective is to directly define a Frobenius splitting on the blow-up algebra

$$S[\epsilon_{1},...,\epsilon_{m+n}]/(I+\langle\epsilon_1,...,\epsilon_m\rangle+\Syz(\{g_i\}))$$
by defining a splitting map on monomials of the form $$C\epsilon_{1}^{a_1}\cdot...\cdot \epsilon_{m+n}^{a_{m+n}}$$

A second approach is to observe that $\widetilde{X^w}$ degenerates to a Frobenius split variety. That is, there is a flat morphism $F:\mathcal{Y}\rightarrow \mathbb{A}^1$ where $F^{-1}(0)=\mathcal{Y}_0$ is the toric scheme found in Section \ref{toric}, and $F^{-1}(t)=\mathcal{Y}_t$ is the general fibre which is isomorphic to $\widetilde{X^w}$. But is being Frobenius split open in flat families?

One might hope that there is a Frobenius splitting of the whole family $\mathcal{Y}$ which compatibly splits each $\mathcal{Y}_t$. However, there can only ever be finitely many  compatibly split subvarieties (see \cite{Kumar-Mehta}). On the other hand, it might be possible to extend the splitting of $\mathcal{Y}_0$ to a splitting of $\mathcal{Y}$ where $\mathcal{Y}_t$ is not necessarily compatibly split. One could then view the general fibre $\mathcal{Y}_t$ as a GIT quotient of $\mathcal{Y}$. Indeed, there is a $\mathbb{G}_m$ action on $\mathcal{Y}$, and if $\mathcal{Y}$ is Frobenius split, then so is $\mathcal{Y}_t \cong  \mathcal{Y}//\mathbb{G}_m$ (using a different splitting -- see Theorem 7.1 in \cite{Smith}). At present however, we don't know if $\widetilde{X^w}$ is Frobenius split. 

\subsection{Weakly normal varieties}\label{weakly_normal}

Recall that a variety $X$ is normal if every finite birational map $f:Y\rightarrow X$ is an isomorphism. We say that $X$ is \textbf{weakly normal} if every finite birational \textit{bijective} map is an isomorphism.
Bijective  here means that the reduction of the fibre over a point is again a point. In characteristic 0, the notions of weakly normal and seminormal are equivalent (see \cite{Vitulli} for more information).

A convenient way to show that a variety is weakly normal is to show that it is Frobenius split.

\begin{lemma}\label{frobenius-weaklynormal}
Frobenius split varieties are weakly normal.
\end{lemma}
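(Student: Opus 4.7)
The property of weak normality is local, so I may reduce to the following affine statement: given a Frobenius-split $\bF_p$-algebra $R$ with splitting $\varphi:R\to R$, and a finite birational extension $R\hookrightarrow S$ such that $\spec S\to\spec R$ is a bijection with single-point reduced fibres, I must show $R=S$. (The characteristic-zero case of the lemma reduces to this via the standard mod-$p$ framework used elsewhere in the paper.)

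My first step converts the bijectivity hypothesis into a $p$-th power condition. Since Frobenius-split rings are reduced, so are $R$ and $S$, and $\spec S\to\spec R$ is a finite universal homeomorphism. A classical fact in characteristic $p$ then produces a ring map $g:S\to R$ and an integer $n\geq 1$ with $g\circ i = F_R^n$ and $i\circ g = F_S^n$, where $i:R\hookrightarrow S$ is the inclusion; in particular $s^{p^n}\in R$ for every $s\in S$. A short induction on $n$ applied to $s^p\in S$ reduces the problem to the case $n=1$, so I need only show that any $s\in S$ with $s^p\in R$ already lies in $R$.

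The heart of the proof is to extend $\varphi$ to the total ring of fractions $K$ of $R$, which coincides with that of $S$ by birationality. Every element of $K$ can be written as $x/t^p$ with $x\in R$ and $t$ a non-zero-divisor of $R$, and I define
\[
\varphi_K\!\left(\tfrac{x}{t^p}\right)\;=\;\tfrac{\varphi(x)}{t}.
\]
Well-definedness is a short calculation using the identity $\varphi(t^p x)=t\,\varphi(x)$ built into the splitting; granting it, $\varphi_K$ extends $\varphi$, is additive, and satisfies $\varphi_K(a^p b)=a\,\varphi_K(b)$ for all $a,b\in K$. Taking $b=1$ together with $\varphi_K(1)=1$ yields the identity $\varphi_K(a^p)=a$ for every $a\in K$, so for $s\in S$ with $s^p\in R$ we obtain $s=\varphi_K(s^p)=\varphi(s^p)\in R$, as required. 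The only real obstacle is the well-definedness check for $\varphi_K$ (keeping the two $R$-module structures on source and target straight); a more abstract alternative would deduce weak normality from $F$-purity, which is implied by Frobenius splitting, but the direct argument above makes transparent that the splitting itself is what does the work.
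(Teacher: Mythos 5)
Your argument is correct, and it is in substance the standard proof of the result that the paper simply cites (Brion--Kumar, Proposition 1.2.5) without reproducing: dominate the bijective finite birational map by a power of Frobenius, then use the splitting to extract $p$-th roots. Since the paper gives no argument of its own, your write-up is a genuine addition rather than a rephrasing, and the key computation is exactly right: from $\varphi(t^px)=t\varphi(x)$ one gets, for $s\in K$ with $s^p\in R$ and $s=a/b$, that $a=\varphi(a^p)=\varphi(s^pb^p)=b\,\varphi(s^p)$, hence $s=\varphi(s^p)\in R$; your extension $\varphi_K$ packages this same calculation, and the well-definedness check goes through as you indicate. Two points deserve explicit justification rather than assertion. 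First, ``finite birational bijective $\Rightarrow$ universal homeomorphism'' is not true for arbitrary reduced $\mathbb{F}_p$-algebras (one can glue a separable residue-field extension at a non-closed point, e.g.\ $\mathbb{F}_2+t\,\mathbb{F}_4[t]\subset\mathbb{F}_4[t]$, and stay bijective); it does hold in the paper's setting of finite-type schemes over an algebraically closed field, because a non-purely-inseparable residue extension at a point would force multiple points in the fibres over nearby closed points, contradicting bijectivity, and that half-line of argument should be said. Second, the ``classical fact'' that a finite universal homeomorphism of reduced $\mathbb{F}_p$-schemes of finite type is dominated by a power of Frobenius (equivalently $S^{p^n}\subseteq R$ for some $n$) is true but is exactly where the finiteness/excellence hypotheses enter, so cite or sketch it. With those two points made precise, your proof stands on its own and makes visible, as you say, that the splitting identity is what does all the work.
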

\begin{proof}
See Proposition 1.2.5 in \cite{Brion-Kumar}.
\end{proof}

We would like to know that $\widetilde{X^w}$ is weakly normal in order to reduce questions involving $\psi_Q$ to information about the restricted map on $T$-fixed points. We rely on the fact that $\widetilde{X^w}$ degenerates to a weakly normal scheme, and that being weakly normal is an open condition in flat families.

\begin{proposition}\label{flat-weaklynormal}
Let $\mathcal{Y}$ be a flat family over $\mathbb{A}^1$ such that $\mathcal{Y}_0$, the special fibre over the origin, is weakly normal. Then $\mathcal{Y}_t$ is also weakly normal.
\end{proposition}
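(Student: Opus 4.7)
The plan is to reduce to an openness theorem. Since we work in characteristic $0$, weak normality and seminormality coincide (as already noted in the excerpt), so it suffices to prove that seminormality of the central fiber in a flat family over $\mathbb{A}^1$ propagates to nearby fibers. I would invoke the standard openness result for the seminormal locus in flat families (Greco--Traverso, with later refinements by Bingener--Flenner): for a flat morphism of finite type between Noetherian schemes over a field of characteristic $0$, the set of points in the base where the fiber is seminormal is open.

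Concretely, applying this to $\mathcal{Y}\to\mathbb{A}^1$, the locus
\[
U = \{t\in\mathbb{A}^1 : \mathcal{Y}_t \text{ is seminormal}\}
\]
is open. By hypothesis $0\in U$, and since $\mathbb{A}^1$ is irreducible, $U$ is dense; in particular the general fiber $\mathcal{Y}_t$ is seminormal, hence weakly normal. This is the cleanest route and is the one I would use.

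A more hands-on alternative, worth sketching, is to work with the seminormalization $\nu:\mathcal{Y}^{sn}\to\mathcal{Y}$ of the total space, which is finite, birational, and a universal homeomorphism. On the central fiber the restriction is a finite birational bijection onto the already-seminormal scheme $\mathcal{Y}_0$ and is therefore an isomorphism by the universal property. The cokernel of $\nu^{\#}:\mathcal{O}_{\mathcal{Y}}\to \nu_{*}\mathcal{O}_{\mathcal{Y}^{sn}}$ is a coherent $\mathcal{O}_{\mathcal{Y}}$-module whose stalks vanish along $\mathcal{Y}_0$; by Nakayama it vanishes in an open neighborhood of $\mathcal{Y}_0$, so $\nu$ restricts to an isomorphism on a neighborhood of the special fiber, giving weak normality on general fibers.

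The main obstacle with the second approach, and the reason I would default to the Greco--Traverso citation, is that seminormalization does not commute with arbitrary base change, so identifying the restriction of $\mathcal{Y}^{sn}$ to $\mathcal{Y}_t$ with the seminormalization of $\mathcal{Y}_t$ requires additional hypotheses (essentially a flat-base-change result for seminormalization, valid here because $\mathbb{A}^1\to\spec k$ is smooth and we are in characteristic $0$). The openness theorem for the seminormal locus bundles this compatibility into a single reference and avoids the delicate base-change bookkeeping.
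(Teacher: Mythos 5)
Your primary route is essentially the paper's own proof: the paper disposes of this proposition by citing Vitulli's survey, i.e.\ exactly the standard openness of the seminormal (= weakly normal, in characteristic $0$) locus in flat families that you invoke via Greco--Traverso/Bingener--Flenner, applied to $\mathcal{Y}\to\mathbb{A}^1$ at $0$. Your extra remarks (the seminormalization-of-the-total-space sketch and the base-change caveat) are sound but not needed; note also that in the paper's applications the nonzero fibres are all isomorphic via the $\mathbb{G}_m$-action, so openness near $0$ already gives the conclusion for every $t\neq 0$.
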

\begin{proof}
See \cite{Vitulli}.
\end{proof}

\begin{corollary}\label{BU-weaklynormal}
The blow-up of $X^w$ along $\partial X^w$ is weakly normal.
\end{corollary}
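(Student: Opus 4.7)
The plan is to reduce the statement to a local assertion on Kazhdan-Lusztig neighborhoods, verify weak normality on the degenerate fiber of the flat family constructed in Section \ref{degen-commute}, and transfer this to the general fiber using the openness of weak normality in flat families (Proposition \ref{flat-weaklynormal}). Since being weakly normal is a local property (it can be checked on an affine open cover, analogously to what was done for the Gorenstein property in Proposition \ref{Gorenstein-local}), and since $X^w$ is covered by translates of affine opens of the form $X^w\cap vX^{\circ}_{\id}\cong (X^w\cap X_v^{\circ})\times\mathbb{A}^{l(v)}$, it suffices to show that $\widetilde{X^w\cap X_v^{\circ}}$ is weakly normal for every $v\leq w$.

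For the special fiber, Corollary \ref{commute} provides a Gröbner degeneration of the blow-up algebra of the Kazhdan-Lusztig variety along its boundary to the blow-up algebra $R[t(\init J_{w,v})]$ of the Stanley-Reisner scheme $Y = \spec R$ associated to the subword complex $\Delta(Q,w)$ along its boundary $\partial Y$. By Proposition \ref{toric-frobenius}, applied to each affine chart of the total transform $\widetilde{Y}$ (which is covered by affine toric schemes, as described in Section \ref{combinatorics}), $\widetilde{Y}$ is Frobenius split by the standard splitting. Hence by Lemma \ref{frobenius-weaklynormal}, $\widetilde{Y}$ is weakly normal.

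Now the Gröbner degeneration yields a flat family $\mathcal{Y}\to\mathbb{A}^1$ whose fiber over $0$ is $\widetilde{Y}$ and whose fiber over $t\neq 0$ is (isomorphic to) $\widetilde{X^w\cap X_v^{\circ}}$. Applying Proposition \ref{flat-weaklynormal} to this family, weak normality of the special fiber $\widetilde{Y}$ propagates to the general fiber, so $\widetilde{X^w\cap X_v^{\circ}}$ is weakly normal. Multiplying by the affine factor $\mathbb{A}^{l(v)}$ preserves weak normality (it is stable under smooth base change), so each chart $\widetilde{X^w\cap vX^{\circ}_{\id}}$ is weakly normal, and the result follows by patching.

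The main subtlety will be verifying that $\widetilde{Y}$ is genuinely covered by affine toric charts to which Proposition \ref{toric-frobenius} applies: the total transform is no longer affine, so one must cover it by the affine charts coming from the polytopal blow-up construction in Section \ref{combinatorics} and check that each such chart is again a weakly normal affine toric scheme (so that its boundary is anticanonical and compatibly split by the standard splitting). The combinatorial stability result (Theorem \ref{stable}) plus Proposition \ref{SR-anticanonical} ensure this, so Proposition \ref{toric-frobenius} applies chart-by-chart, and the Frobenius splittings glue on overlaps because they are all the standard splitting defined by the characters of the ambient torus.
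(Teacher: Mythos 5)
Your proposal is correct and follows essentially the same route as the paper: Gröbner-degenerate the blow-up of the Kazhdan-Lusztig variety via Corollary \ref{commute}, use Proposition \ref{toric-frobenius} and Lemma \ref{frobenius-weaklynormal} to get weak normality of the toric special fiber, transfer it to the general fiber by Proposition \ref{flat-weaklynormal}, and conclude by locality of weak normality. The extra care you take (covering the non-affine total transform by affine toric charts and noting that the $\mathbb{A}^{l(v)}$ factor preserves weak normality) only makes explicit details the paper leaves implicit.
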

\begin{proof}
By Corollary \ref{commute}, the total transform $\widetilde{X^w\cap X_v^{\circ}}$ Gr\"{o}bner degenerates to a toric scheme, which by Lemma \ref{toric-frobenius} is Frobenius split. The result now holds by Lemma \ref{frobenius-weaklynormal} and Proposition \ref{flat-weaklynormal}, and using that being weakly normal can be checked locally.
\end{proof}

\subsection{Reducing to $T$-fixed points}

The next theorem will finally reduce understanding $\psi_Q$ to a question of $T$-fixed points. First we need a small lemma:

\begin{lemma}\label{torus_fixed}
Suppose that $X\hookrightarrow \mathbb{P}^n$ is an equivariant inclusion with respect to a torus action $T$. If $\dim X>0$ (and $X\neq \emptyset$), then $|X^T|>1$.
\end{lemma}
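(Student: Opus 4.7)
The plan is to produce two distinct $T$-fixed points in $X$ as the two limits of a generic $\mathbb{G}_m$-orbit. Since the embedding is $T$-equivariant, I would first choose homogeneous coordinates on $\mathbb{P}^n$ in which $T$ acts diagonally, $t\cdot[v_0:\ldots:v_n]=[\chi_0(t)v_0:\ldots:\chi_n(t)v_n]$, for characters $\chi_0,\ldots,\chi_n$ of $T$. The $T$-fixed points of $\mathbb{P}^n$ are then exactly those projective points whose non-zero coordinates all share a common character; in particular the fixed locus of $\mathbb{P}^n$ is a finite union of linear subspaces.

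Next, I would pick a one-parameter subgroup $\lambda:\mathbb{G}_m\to T$ generic enough that the integer weights $a_i=\langle\chi_i,\lambda\rangle$ satisfy $a_i=a_j$ if and only if $\chi_i=\chi_j$. Such $\lambda$ exist because the bad cocharacters are contained in finitely many hyperplanes of $\mathrm{Hom}(\mathbb{G}_m,T)\otimes\mathbb{Q}$. With this choice, the $\lambda$-fixed locus of $\mathbb{P}^n$ coincides with the $T$-fixed locus, so $X^\lambda=X^T$. For any $x\in X$, completeness of $\mathbb{P}^1$ extends the orbit map $t\mapsto\lambda(t)\cdot x$ to a morphism $\mathbb{P}^1\to\mathbb{P}^n$, and its images $p^-,p^+$ at $0$ and $\infty$ are $\lambda$-fixed, hence $T$-fixed, points of $\mathbb{P}^n$. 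Closedness of $X$ in $\mathbb{P}^n$ places them in $X$, hence in $X^T$.

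A direct computation in diagonal coordinates shows that $p^-$ is supported on the indices $i\in\mathrm{supp}(x)$ attaining the minimum of $a_i$, and $p^+$ on those attaining the maximum. Consequently $p^-=p^+$ holds exactly when all the $a_i$ over $\mathrm{supp}(x)$ agree, which by our choice of $\lambda$ is equivalent to $x$ being $T$-fixed. To finish, I would split into two cases: if $X\subseteq X^T$, then since $\dim X>0$ the variety $X$ is infinite, so $|X^T|=|X|>1$; otherwise, choose $x\in X\setminus X^T$ and conclude that $p^-\neq p^+$ are two distinct $T$-fixed points.

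The main obstacle is the genericity statement for $\lambda$ together with the bookkeeping that matches weight-supports to fixed points; neither step is deep, but both need to be set up cleanly so the limit arguments are honest. Once they are in place the conclusion follows immediately from the short case distinction above.
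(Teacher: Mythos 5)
Your proof is correct, and it is the standard argument for this fact: diagonalize the $T$-action, choose a cocharacter $\lambda$ generic enough that $X^\lambda = X^T$, and produce two distinct fixed points as the limits at $0$ and $\infty$ of the $\lambda$-orbit of a non-fixed point (with the trivial case $X\subseteq X^T$ handled by $\dim X>0$). The paper itself gives no argument here, deferring to the citation \cite{DaSilva-strict}, so there is nothing substantive to contrast; your write-up is a legitimate self-contained proof of the lemma as it is used (the fibres $Z_v$ are closed in projective space, which your limit step genuinely needs, and which you state). Two small points to make explicit if you polish it: the inclusion must be a closed immersion for the limits to land in $X$ (otherwise $\mathbb{G}_m\subset\mathbb{P}^1$ is a counterexample), and the claim that $T$ acts diagonally in suitable homogeneous coordinates uses that a torus action on $\mathbb{P}^n$ lifts from $PGL_{n+1}$ to a linear action on $\mathbb{A}^{n+1}$ --- standard for tori, but worth a sentence.
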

\begin{proof}
	See \cite{DaSilva-strict}.
\end{proof}

\begin{theorem}\label{T-fixed points}
Let $\psi_Q:BS^Q\rightarrow \widetilde{X^w}$ be as in Lemma \ref{universal}. Then $\psi_Q$ is an isomorphism iff it is a bijection on $T$-fixed points.
\end{theorem}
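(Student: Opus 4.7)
The forward direction is immediate: any isomorphism restricts to a bijection on the finite $T$-fixed point sets. For the reverse, I assume the bijection on $T$-fixed points and argue in two main stages.

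First, I would show $\psi_Q$ is finite. Since $\psi_Q$ is already proper (Corollary \ref{universal}), it suffices to prove its fibers are zero-dimensional. The locus $S = \{y \in \widetilde{X^w} : \dim \psi_Q^{-1}(y) \geq 1\}$ is closed by upper semi-continuity of fiber dimension and $B$-invariant by the $B$-equivariance of $\psi_Q$ (Lemma \ref{equivariant}). If $S$ were non-empty, Borel's fixed point theorem applied to the projective variety $\widetilde{X^w}$ would produce a $T$-fixed $y_0 \in S$; then $\psi_Q^{-1}(y_0)$ would be a positive-dimensional closed $T$-invariant subvariety of the projective variety $BS^Q$, and Lemma \ref{torus_fixed} would supply at least two distinct $T$-fixed points of $BS^Q$ mapping to $y_0$, contradicting the hypothesis. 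Hence $S = \emptyset$ and $\psi_Q$, being proper with finite fibers, is finite.

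Second, I identify $\psi_Q$ with the normalization of $\widetilde{X^w}$: since $BS^Q$ is normal and $\psi_Q$ is a finite birational surjection, it factors through the normalization of $\widetilde{X^w}$, and the induced finite birational morphism between normal varieties is an isomorphism by Zariski's main theorem. Let $W \subset \widetilde{X^w}$ be the non-isomorphism locus of $\psi_Q$, i.e.\ the support of the coherent sheaf $\psi_{*}\mathcal{O}_{BS^Q}/\mathcal{O}_{\widetilde{X^w}}$; it is closed and $B$-invariant. Suppose for contradiction that $W$ is non-empty. Borel yields a $T$-fixed $y_0 \in W$, and the finite $T$-invariant fiber $\psi_Q^{-1}(y_0)$ consists entirely of $T$-fixed points of $BS^Q$. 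If this fiber contained at least two points the $T$-fixed point bijection would be contradicted at once, so $\psi_Q^{-1}(y_0) = \{p_0\}$ set-theoretically for a unique $T$-fixed $p_0$. Then the stalk $(\psi_{*}\mathcal{O}_{BS^Q})_{y_0}$ equals the local ring $\mathcal{O}_{BS^Q, p_0}$, which is the local normalization of $A := \mathcal{O}_{\widetilde{X^w}, y_0}$ and is a local ring sharing the residue field $k$ with $A$. Since $A$ is seminormal by Corollary \ref{BU-weaklynormal} (weak normality being a local property), the standard result that a seminormal local ring coincides with its normalization whenever the latter is local and has the same residue field forces $A = \mathcal{O}_{BS^Q, p_0}$, contradicting $y_0 \in W$. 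Thus $W = \emptyset$ and $\psi_Q$ is an isomorphism.

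The main obstacle is the last step: the bijection on $T$-fixed points alone rules out multi-point fibers, but does not by itself exclude a fiber consisting of a single $T$-fixed point endowed with non-reduced scheme structure -- a ramification-like anomaly at $y_0$. It is precisely the weak normality of $\widetilde{X^w}$ (through the local-ring characterization of seminormality) that precludes this remaining possibility and closes the argument; everything else is a routine application of Borel's fixed point theorem together with Lemma \ref{torus_fixed}.
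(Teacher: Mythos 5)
Your forward direction and your first stage (finiteness) are fine, and the latter is essentially the paper's argument in a slightly cleaner packaging: the paper bounds the fibres over $T$-fixed points using Lemma \ref{torus_fixed} and spreads out by $B$-equivariance, while you locate a $T$-fixed point inside the closed, $B$-invariant locus of positive-dimensional fibres via Borel's fixed point theorem; either way, quasi-finiteness plus properness gives finiteness.

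The gap is in your second stage. The ``standard result'' you invoke -- that a seminormal local ring equals its normalization whenever the normalization is local with the same residue field -- is false. The pinch point $A=k[u,t,v]/(v^2-ut^2)$, localized at the origin, is weakly normal (hence seminormal in characteristic $0$); its normalization is $k[s,t]$ (with $u=s^2$, $v=st$) localized at the unique point over the origin, so the normalization is local with residue field $k$, and yet $A$ is not normal there. Seminormality can only be brought to bear once you know the finite birational map is \emph{bijective}, i.e.\ injective over every point near $y_0$, not merely over the $T$-fixed point $y_0$ itself: in the pinch-point example the normalization is two-to-one over a punctured curve through the distinguished point while having a singleton fibre at that point, which is exactly the configuration your hypothesis does not exclude, since the bijection on $T$-fixed points only controls fibres over $T$-fixed points. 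Nor can you repair this by applying Borel to the locus of multi-point fibres, because that locus need not be closed (again the pinch point: it is the punctured curve). This is precisely where the paper instead argues that the bijection on $T$-fixed points propagates via the $B$-action to bijectivity at all points of $\widetilde{X^w}$, and then applies the definition of weak normality (finite, birational and bijective implies isomorphism) directly; your purely local-ring shortcut at the single point $y_0$ does not substitute for that global injectivity statement, so the last step of your argument does not go through as written.
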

\begin{proof}

One direction is obvious. For the other direction, observe that $\psi_Q$ is birational by Lemma \ref{universal} and $\widetilde{X^w}$ is weakly normal by Corollary \ref{BU-weaklynormal}, so it is enough to show that $\psi_Q$ is finite and bijective to conclude that it is an isomorphism. By assumption, $\psi_Q$ is a bijection on $T$-fixed points, so by $B$-equivariance we can use the $B$-action to show that it is bijective at all points. Since $\psi_Q$ is also proper, we only need that $\psi_Q$ is quasi-finite for the result to hold.

Consider the fibre $Z_v = \psi_Q^{-1}(e_v)$, which is a projective subvariety of $BS^Q$. The embedding of $Z_v$ into projective space is equivariant with respect to the torus action on $Z_v$ (since $\psi_Q$ is $B$-equivariant). Under these conditions, dim($Z_v)>0 \Rightarrow |Z_v^T|>1$ by Lemma \ref{torus_fixed}. Since $|Z_v^T|=1$ by assumption, we conclude that $\dim(Z_v)=0$.

Since $\psi_Q$ is $B$-equivariant, the fibre of a point in $Be_vB/B$ is a $B$-translate of $Z_v$.  These Bruhat cells cover $X^w$, so $\psi_Q$ is quasi-finite.
\end{proof}

An understanding of when the Bott-Samelson resolution $\varphi_Q$ is an isomorphism over the smooth locus might be useful in determining information about $\psi_Q$. A discussion about these strict Bott-Samelson maps as well as examples on how reducing to $T$-fixed points can be a powerful tool can be found in  \cite{DaSilva-strict}.

While these results suggest that $\widetilde{X^w}$ is somewhere between a Bott-Samelson variety and a Schubert variety, the hope is that $\widetilde{X^w}$ is itself a Bott-Samelson variety or some simple quotient of one. This will be the topic of future research.

\bibliographystyle{plain}
\bibliography{Library}

\end{document}